\documentclass[leqno,10pt]{amsart}
\usepackage{amsmath,amscd,amsthm,amsxtra}
\usepackage{epsfig,graphics,color,colortbl}
\usepackage{amssymb,latexsym}
\usepackage{mathabx}
\usepackage{mathrsfs,eucal,upgreek}
\usepackage[poly,all]{xy}
\usepackage{hyperref}
\usepackage{tikz-cd}
\usepackage{arydshln}
\usepackage{ytableau}
\usepackage{adjustbox}
\usepackage{listings}
\usepackage[draft]{todonotes}
\usepackage{datetime} 
\usepackage{tabularx}
\usepackage{lipsum}
\usepackage{dynkin-diagrams}
\usepackage{marginnote}
\usepackage[normalem]{ulem}  
\usepackage[nocompress]{cite}  
\usepackage{float}
\usepackage{enumerate}
\usepackage{comment}
\usepackage{longtable}

\setlength{\textwidth}{14cm} \setlength{\textheight}{21cm}
\setlength{\oddsidemargin}{1.2cm} \setlength{\evensidemargin}{1.2cm}

\newtheorem{thm}{\bf Theorem}[section]
\newtheorem{df}[thm]{\bf Definition}
\newtheorem{prop}[thm]{\bf Proposition}
\newtheorem{cor}[thm]{\bf Corollary}
\newtheorem{lem}[thm]{\bf Lemma}
\newtheorem{rem}[thm]{\bf Remark}
\newtheorem{ex}[thm]{\bf Example}

\numberwithin{equation}{section}

\newcommand{\mr}{\mathring}
\newcommand{\ot}{\otimes}
\newcommand{\wtd}{\widetilde}
\newcommand{\ov}{\overline}

\newcommand{\mc}{\mathcal}
\newcommand{\mf}{\mathfrak}
\newcommand{\ms}{\mathscr}
\newcommand{\mb}{\mathbf}

\newcommand{\Z}{\mathbb{Z}}
\newcommand{\Q}{\mathbb{Q}}
\newcommand{\C}{\mathbb{C}}

\newcommand{\de}{\delta}

\newcommand{\bk}{\mathbf{k}}
\newcommand{\g}{\mathfrak{g}}
\newcommand{\bo}{\mathfrak{b}}
\newcommand{\h}{\mathfrak{h}}
\newcommand{\hd}{\mathfrak{h}^*}
\newcommand{\cg}{\mathring{\mathfrak{g}}}
\newcommand{\ch}{\mathring{\mathfrak{h}}}
\newcommand{\chd}{\mathring{\mathfrak{h}}^*}
\newcommand{\qi}{{q_i}}
\newcommand{\cW}{\mathring{W}}
\newcommand{\eW}{\widehat{W}}
\newcommand{\xz}{\mathsf{x}_0}
\newcommand{\GL}{\mathcal{GL}}

\newcommand{\al}{{\alpha}}
\newcommand{\E}{\mathsf{E}} 
\newcommand{\F}{\mathsf{F}} 
\newcommand{\qbn}[2]{{\left[\!\!\begin{array}{c} #1 \\ #2 \end{array}\!\!\right]}}

\newcommand{\sfA}{\mathsf{A}} 
\newcommand{\sfP}{\mathsf{P}} 
\newcommand{\sfQ}{\mathsf{Q}} 
\newcommand{\sfDel}{\mathsf{\Delta}} 
\newcommand{\sfD}{\mathsf{D}} 
\newcommand{\sfM}{\mathsf{M}} 
\newcommand{\eM}{\widehat{\sfM}} 
\newcommand{\kaction}[1]{\mb{t}_{#1}}
\newcommand{\eaction}[1]{\mb{e}_{#1}}

\newcommand{\I}{I_0}
\newcommand{\lar}{\lambda_r}
\newcommand{\tr}{t_r}

\makeatletter
\newcommand*{\shifttext}[2]{%
  \settowidth{\@tempdima}{#2}%
  \makebox[\@tempdima]{\hspace*{#1}#2}%
}
\makeatother

\usepackage{todonotes}

\title[cominuscule prefundamental representations]
{Unipotent quantum coordinate ring and cominuscule prefundamental representations}

\author{IL-SEUNG JANG}
\address[I.-S. Jang]{Department of Mathematics, Incheon National University, Incheon 22012, Republic of Korea}
\email{ilseungjang@inu.ac.kr}

\author{JAE-HOON KWON}
\address[J.-H. Kwon]{Department of Mathematical Sciences and RIM, Seoul National University, Seoul 08826, Republic of Korea}
\email{jaehoonkw@snu.ac.kr}

\author{EUIYONG PARK}
\address[E. Park]{Department of Mathematics, University of Seoul, Seoul 02504, Republic of Korea}
\email{epark@uos.ac.kr}


\keywords{quantum affine algebras, unipotent quantum coordinate rings, prefundamental modules, braid group symmetries}
\subjclass[2010]{17B37, 22E46, 05E10}

\thanks{I.-S Jang  is supported by the National Research Foundation of Korea(NRF) grant funded by the Korea government (MSIT) (No.~RS-2023-00277388)}
\thanks{J.-H. Kwon is supported by the National Research Foundation of Korea(NRF) grant funded by the Korea government (MSIT) (No.~NRF-2020R1A5A1016126 and RS-2024-00342349)}
\thanks{E. Park is supported by the National Research Foundation of Korea (NRF) Grant funded by the Korea Government (MSIT) (No.~RS-2023-00273425)}

\begin{document}
\begin{abstract}
We continue the study of realization of the prefundamental modules $L_{r,a}^{\pm}$, introduced by Hernandez and Jimbo, in terms of unipotent quantum coordinate rings as in \cite{JKP23}. We show that the ordinary character of $L_{r,a}^{\pm}$ is equal to that of the unipotent quantum coordinate ring $U_q^-(w_r)$ associated to fundamental $r$-th coweight. When $r$ is cominuscule, we prove that there exists a $U_q(\bo)$-module structure on $U_q^-(w_r)$, which is isomorphic to $L_{r,a\eta_r}^\pm$ for some $\eta_r \in \mathbb{C}^\times$.
\end{abstract}

\maketitle
\setcounter{tocdepth}{1}

\section{Introduction}
Let $\mf{g}$ be the untwisted affine Kac-Moody algebra associated with the generalized Cartan matrix $\sfA = (a_{ij})_{i, j \in I}$ with $I=\{\,0,\dots,n\,\}$. Let $U_q(\g)$ be the associated Drinfeld-Jimbo quantum group, and let $U_q'(\g)$ be the subquotient of $U_q(\g)$ without the degree operator.

Let $U_q(\bo)$ be the Borel subalgebra of $U_q'(\g)$.
In \cite{HJ}, Hernandez and Jimbo introduced a category $\mc{O}$ of $U_q(\mf{b})$-modules partly motivated by the existence of a limit of normalized $q$-characters of Kirillov-Reshetikhin modules \cite{Her06, Nak03}. 
This category contains the finite-dimensional $U_q'(\mf{g})$-modules, and an irreducible module in $\mc{O}$, which is infinite-dimensional in general, is characterized in terms of tuples ${\bf \Psi} = (\Psi_i(z))_{i \in \I}$ of rational functions ($I_0=I\setminus \{0\}$), which are regular and non-zero at $z = 0$.

For $a\in \C(q)^\times$ and $r\in I_0$, the irreducible $U_q(\mf{b})$-modules $L_{r,a}^\pm$ corresponding to $(\Psi_i(z))_{i\in I_0}$ such that
\begin{equation*}
\Psi_r(z) = (1-az)^{\pm 1},\quad \Psi_i(z)=1\quad (i\neq r),
\end{equation*}
are called the prefundamental modules, where $L_{r,a}^-$ (resp.~$L_{r,a}^+$) is called the negative (resp.~positive) prefundamental module. They are the building blocks in $\mc{O}$ since each irreducible module in $\mc{O}$ is a subquotient of a tensor product of prefundamental representations and one-dimensional modules. 

Let $\mathring{\g}$ be the subalgebra of $\mf{g}$ associated to $(a_{ij})_{i,j\in I_0}$.
The ordinary $\mathring{\g}$-character ${\rm ch} (L^\pm_{r,a})$ of $L^\pm_{r,a}$ is given by a formal power series in $\Z[\![e^{-\alpha_i}\,|\,i\in I_0\,]\!]$, where $\alpha_i$ denotes the simple root of $\mathring{\g}$ for $i\in I_0$.
There is a product formula given by
\begin{equation}\label{eq:MY formula}
{\rm ch} (L^\pm_{r,a})=\frac{1}{\prod_{\beta\in \mathring{\sfDel}^+}(1-e^{-\beta})^{[\beta]_r}},
\end{equation}
which was conjectured in \cite{MY14} and proved in \cite{Lee, Neg}.
Here $\mathring{\sfDel}^+$ is the set of positive roots of $\mathring{\g}$ and $[\beta]_r$ is the coefficient of the simple root $\alpha_r$ in $\beta$.

In this paper, we continue the study of realization of $L_{r,a}^\pm$ in terms of unipotent quantum coordinate rings as in \cite{JKP23}.
Let $W$ (resp.~$\eW$) be the (resp.~extended) affine Weyl group associated with $\sfA$, where $\eW \simeq W \ltimes \mc{T}$ with $\mc{T}$ the group of automorphisms of the Dynkin diagram of $\sfA$.
For $r\in I_0$, let $\lambda_r$ be a weight in the dual Cartan subalgebra $\h^*$ of $\g$ corresponding to the $r$-th fundamental coweight of $\mathring{\g}$, and let $w_r\in W$ be such that $t_{-\lambda_r}=w_r\tau_r \in \eW$, where $t_{-\lambda_r}\in \eW$ denotes the translation on $\h^*$ associated to $-\lambda_r$ and $\tau_r \in\mc{T}$.
Let $U_q^-(w_r)$ denote the unipotent quantum coordinate ring generated by the root vectors associated to a reduced expression of $w_r$ corresponding to $\sfDel^+\cap w_r(-\sfDel^+)$, where $\sfDel^+$ is the set of positive roots of $\g$.

We first prove that $U_q^-(w_r)$ has the same $\mathring{\g}$-character as $L_{r,a}^\pm$, that is, 
\begin{equation}\label{eq:ch of Uw_r}
	{\rm ch} \left( U_q^-(w_r) \right)   = \frac{1}{\prod_{\beta\in \mathring{\sfDel}^+}(1-e^{-\beta})^{[\beta]_r}}.
\end{equation}
Combining \eqref{eq:MY formula} and \eqref{eq:ch of Uw_r}, one may expect to have  a $U_q(\mf{b})$-module structure on $U_q(w_r)$ which is isomorphic to $L^\pm_{r,a}$.
Indeed, we constructed such a $U_q(\mf{b})$-module structure when $\g$ is of type $A_n^{(1)}$ and $D_n^{(1)}$, and $r$ is minuscule in \cite{JKP23}. The action of the Chevalley generator $e_i\in U_q(\mf{b})$ for $i\in I_0$ is given by the usual $q$-derivation on $U_q^-(\g)$, while the action of $e_0$ is given by a left or right multiplication by the root vector corresponding to the negative maximal root of $\mathring{\g}$.

The main result in this paper is to extend the result in \cite{JKP23} to the case when $r$ is cominuscule, that is, $B_n^{(1)}$ with $r=1$, $C_n^{(1)}$ with $r=n$, $E_6^{(1)}$ with $r=1,6$, and $E_7^{(1)}$ with $r=7$ including the cases of $A_n^{(1)}$ and $D_n^{(1)}$ with $r$ minuscule, equivalently cominuscule (hence, $[\beta]_r=1$ for all $\beta\in \mathring{\Delta}^+$ in \eqref{eq:MY formula}). We prove the following.

\begin{thm}[Theorem \ref{thm:main3}, Theorem \ref{thm:main3-1}] \label{thm:positive case}
When $r$ is cominuscule, there exists a $U_q(\bo)$-module structure on $U_q^-(w_r)$, which is isomorphic to $L_{r,a\eta_r}^\pm$ for some $\eta_r \in \mathbb{C}^\times$.
\end{thm}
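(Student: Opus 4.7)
The plan is to extend the construction of \cite{JKP23} from the minuscule case to the full cominuscule setting, following the same general pattern: first exhibit explicit operators on $U_q^-(w_r)$ realizing the Chevalley generators $\{e_i, k_i^{\pm 1}\mid i\in I\}$ of $U_q(\bo)$, then verify the defining relations, and finally identify the resulting $U_q(\bo)$-module with $L_{r,a\eta_r}^{\pm}$ by combining the characterization of prefundamental modules through their highest $\ell$-weight with the character equality \eqref{eq:ch of Uw_r}. The Cartan generators $k_i$ act diagonally on the weight grading of $U_q^-(w_r)$, shifted by the highest $\ell$-weight attached to $\Psi_r(z)=(1-a\eta_r z)^{\pm 1}$. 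For $i\in I_0$, the operator $e_i$ acts as the restriction to $U_q^-(w_r)$ of Kashiwara's $q$-boson operator $e_i'$ on $U_q^-(\g)$. The essential new ingredient is the $e_0$-action, which I would define as right (or left) multiplication by a scalar multiple of the PBW root vector $F_\theta\in U_q^-(\mathring{\g})$ attached to the negative maximal root $-\theta$ of $\mathring{\g}$.

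\textbf{Key step.} Stability of $U_q^-(w_r)$ under $e_i'$ for $i\in I_0$ follows from the shape of $\sfDel^+\cap w_r(-\sfDel^+)$ attached to the translation element $t_{-\lar}=w_r\tau$; stability under the prescribed $e_0$-action uses exactly the cominuscule condition $[\beta]_r\in\{0,1\}$, which ensures that adjoining $-\theta$ to any element of $\sfDel^+\cap w_r(-\sfDel^+)$ stays within that multiset. The relations among $\{e_i,k_i\mid i\in I_0\}$ are inherited from $U_q^-(\g)$, and the mixed Cartan relations $k_i e_0 k_i^{-1}=q^{-a_{i0}} e_0$ reduce to a weight computation. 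The principal obstacle is verifying the quantum Serre relations between $e_0$ and $e_i$ for those $i$ adjacent to $0$ in the affine Dynkin diagram, which I expect to require a case-by-case analysis across $B_n^{(1)}$, $C_n^{(1)}$, $E_6^{(1)}$, and $E_7^{(1)}$, using the explicit form of $\theta$ in each type together with Levendorskii--Soibelman-type commutation identities between $F_\theta$ and the $q$-boson operators $e_i'$.

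\textbf{Identification.} Once the $U_q(\bo)$-action is in place, the element $1\in U_q^-(w_r)$ is by construction annihilated by every $e_i$ and is a joint eigenvector for the Drinfeld--Cartan loop generators. Computing its $\ell$-weight $(\Psi_i(z))_{i\in I_0}$ by matching eigenvalues, the cominuscule condition forces $\Psi_i(z)=1$ for $i\neq r$, while $\Psi_r(z)=(1-a\eta_r z)^{\pm 1}$ with $\eta_r\in\C^\times$ coming from the scalar chosen in the $e_0$-action. The universal property of $L_{r,a\eta_r}^{\pm}$ then yields a surjection $U_q^-(w_r)\twoheadrightarrow L_{r,a\eta_r}^{\pm}$ of $U_q(\bo)$-modules, and this surjection is forced to be an isomorphism since both sides share the same character by \eqref{eq:ch of Uw_r} and \eqref{eq:MY formula}.
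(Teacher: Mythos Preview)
Your overall strategy matches the paper's: define $e_i$ for $i\in I_0$ via $q$-boson operators $e_i'$, define $e_0$ via multiplication by (a scalar multiple of) the root vector associated to $\theta$, verify the relations, then identify with $L_{r,a\eta_r}^\pm$ by computing the $\ell$-weight of $1$ and invoking the character equality. However, two points in your account are either inaccurate or understate the difficulty.

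First, the cominuscule hypothesis is \emph{not} what guarantees that $U_q^-(w_r)$ is stable under the $e_0$-action. Stability under left multiplication by the root vector holds for every $r$, via the Levendorski\u{\i}--Soibelman relations (the paper's Theorem~\ref{thm:1st main} constructs the $U_q(\bo)$-module structure for all $r$). The cominuscule hypothesis enters only at the identification step: for $a_r\ge 2$ the resulting module is reducible (Proposition~\ref{prop:ar ge 2 cases}), so the character comparison cannot force the surjection to $L_{r,a\eta_r}^\pm$ to be an isomorphism. Second, for the Serre relations between $e_0$ and $e_j$, the paper does \emph{not} proceed case-by-case as you suggest. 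Instead it introduces braid group symmetries $\texttt{T}_i$ on the $q$-boson algebra (Corollary~\ref{cor:braid on q-bosons}) and realizes $e_0$ as $\texttt{T}_{-\lambda_r}(\mb{e}_0')$; since $\texttt{T}_{-\lambda_r}(\mb{e}_j')=\mb{e}_j'$ for $j\in I_0\setminus\{r\}$, the relations for those $j$ follow uniformly by applying $\texttt{T}_{-\lambda_r}$ to the known Serre relations in the $q$-boson algebra, and the remaining case $j=r$ reduces to the single $q$-commutation $\mb{e}_r'\mb{x}_0=q_0^{a_{0r}}\mb{x}_0\mb{e}_r'$, which comes from $e_r'(y_0)=0$ (Lemma~\ref{lem:description of x0}). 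Your Levendorski\u{\i}--Soibelman approach may well succeed, but it would be substantially more laborious, and the braid-symmetry argument is what allows the paper to handle all types at once.

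Finally, the step you treat most lightly---computing the $\ell$-weight of $1$---is in fact the technical heart of the paper. One needs explicit formulas expressing $\E_{\delta-\alpha_r}$ modulo $U_q^+(\delta-\alpha_r)$ (Lemma~\ref{lem:crucial formulas of affine root vectors}), a recursion for $\E_{k\delta-\alpha_r}$ acting on $1$ (Proposition~\ref{prop:action of affine root vectors for rho-}, Lemma~\ref{lem:l-highest weight}), and for types $B,C$ a quantum-shuffle description of the dual root vectors to compute $e_i'$ on them (Appendix~\ref{appendixA}). You should expect this to require real work beyond ``matching eigenvalues.''
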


We remark that the parameter $\eta_r$ only depends on the rank of the underlying simple Lie algebra of $\g$ up to a sign associated with $r$. For example, for type $A_n^{(1)}$, we have $\eta_r = (-1)^n q^{-n-1} (q-q^{-1}) o(r)$, where $o : I \,\rightarrow\, \{ \pm 1 \}$ be a map such that $o(i) = -o(j)$ whenever $a_{ij} < 0$.

The $U_q(\mf{b})$-module structures on $U^-_q(w_r)$ are still defined in almost the same way as in \cite{JKP23}, while the method in this paper is more improved and unified compared to \cite{JKP23}.
In particular, when the action of $e_0$ is given by a left multiplication by the root vector corresponding to the maximal root in $\sfDel^+\cap w_r\left(-\sfDel^+\right)$ with respect to a reduced expression of $w_r$, the space $U^-_q(w_r)$ has a well-defined $U_q(\mf{b})$-module structure for any $r$. But it is not irreducible unless $r$ is cominuscule. 
Thus the $U_q(\bo)$-module $U^-_q(w_r)$ is not isomorphic to $L_{r,a}^\pm$ in general except for cominuscule $r$ (see Remark \ref{rem:Mw} for further discussion). So we expect a completely different description of $e_0$ on $U_q^-(w_r)$ to have an isomorphism to $L^\pm_{r,a}$ for non-cominuscule $r$.

When we prove Theorem \ref{thm:positive case}, it is crucial to compute the $\ell$-highest weight of $1 \in U_q^-(w_r)$ with respect to the $U_q(\bo)$-actions. This is more involved than in \cite{JKP23} beyond types $A_n^{(1)}$ and $D_n^{(1)}$. 
So we use the quantum shuffle approach following \cite{Le04} on root vectors in $U_q^-(w_r)$ to have an explicit description of the action of $e_i$ ($i\neq 0$) on them (see Appendix \ref{appendixA}) and the crystal structure developed in \cite{Kw13,JK19, Jang22} (see Remark \ref{rem:action of ei on dual vector} for more details).

Recently, a category of modules over Borel subalgebra of twisted affine Kac-Moody algebra is developed in \cite{Wang23} to establish twisted $Q\widetilde{Q}$-systems, and then the prefundamental modules in that category are also constructed following \cite{HJ}. It would be interesting to realize those modules by our approach for some cases.
Also, there is a functor $\mc{F}$ from a category of $U_{q^{-1}}(\bo)$-modules to that of $U_q(\bo)$-modules such that $\mc{F}(L_{r,a}^{\pm,q^{-1}})$ are also prefundamental modules \cite{P} (cf.~\cite{HL16}), where $L_{r,a}^{\pm,q^{-1}}$ are prefundamental modules over $U_{q^{-1}}(\bo)$. In fact, there exists $\gamma \in \mathbb{C}^\times$ such that $\mc{F}(L_{r,a}^{\pm,q^{-1}}) \cong L_{r,a\gamma}^\mp$ for all $i \in \I$ and $a \in \mathbb{C}^\times$. On the other hand, the parameters $\eta_r$'s in Theorem \ref{thm:positive case} for $L_{r,a}^+$ and $L_{r,a}^-$ are the same up to a sign. This similar phenomenon may suggest a connection between Theorem \ref{thm:positive case} and \cite{P}.

The paper is organized as follows.
In Section \ref{sec:braid group on boson}, 
we review the realization of the $q$-boson algebra following \cite{Lu10}, and then introduce the braid group symmetries on it. 
In Section \ref{sec:extended affine Weyl groups}, 
we recall some properties of $t_{-\lambda_r}$,
which play crucial roles throughout the paper.
In Section \ref{sec:pseudo-prefundamentals}, 
we show that $U_q^-(w_r)$ has a $U_q(\bo)$-module structure, which belongs to $\mc{O}$.
In Section \ref{sec:prefundamentals}, we prove that the character of $U_q^-(w_r)$ coincides with that of $L_{r,a}^\pm$ (Theorem \ref{thm:main2}), and when $r$ is cominuscule, it is isomorphic to $L_{r,a}^\pm$ up to shift of spectral parameter $a$ (Theorem \ref{thm:main3} and Theorem \ref{thm:main3-1}).

\vskip 3mm
\noindent
{\em \bf Acknowledgement.}
The proof of Lemma \ref{lem:l-highest weight} is inspired by a comment of an anonymous referee of \cite{JKP23}.
We sincerely thank the anonymous referee for their careful reading and many valuable comments.

\section{Braid group symmetries on $q$-boson algebra} \label{sec:braid group on boson}
\subsection{Cartan data and quantum group} \label{subsec:cartan data and quantum group}
Let $\sfA = (a_{ij})_{i,j \in I}$ be a symmetrizable generalized Cartan matrix with an index set $I$.
Following \cite{Kac}, we assume that
\begin{itemize}
	\item $\sfD = {\rm diag}(d_i)_{i \in I}$ is a diagonal matrix such that $d_i \in \Z$ and $\sfD\sfA$ is symmetric,
	\item $\sfP^\vee$ is a free abelian group of rank $2|I|-{\rm rank}A$, which is the dual weight lattice,
	\item $\Pi^\vee = \left\{\,h_i \in P^\vee \,|\, i\in I\,\right\}$ is the set of simple coroots, 
	\item $\h = \mathbb{C} \otimes_{\,\mathbb{Z}} \sfP^\vee$ is the $\mathbb{C}$-linear space spanned by $\sfP^\vee$, and $\h^*$ is the dual space of $\h$,
	\item $\sfP = \left\{\,\lambda\in\h^*\,|\,\lambda(P^\vee) \subset \Z\,\right\}$ is the weight lattice,
	\item $\Pi = \left\{\,\alpha_i\in\h^*\,|\,i\in I\,\right\}$ is the set of simple roots,
	\item $\sfDel = \sfDel^+ \cup \sfDel^-$ is the set of roots, where $\sfDel^+$ (resp.~$\sfDel^-$) is the set of positive (resp.~negative) roots,
\end{itemize}
\noindent
and let $\g$ be the Kac-Moody algebra over $\C$ associated with the Cartan datum $\left(\, \sfA, \sfP^\vee, \Pi^\vee, \sfP, \Pi \,\right)$, where $\h$ is the Cartan subalgebra of $\g$.
Take a symmetric bilinear $\C$-valued form $\left(\,\cdot\,,\,\cdot\,\right)$ on $\h^*$ such that $(\alpha_i, \alpha_i) \in 2\Z$ for $i \in I$ following \cite[\textsection\,2.1]{Kac}. 
In particular, for $i, j \in I$,
\begin{equation} \label{eq:aij}
a_{ij} = \langle h_i, \alpha_j \rangle = \frac{2(\alpha_i, \alpha_j)}{(\alpha_i, \alpha_i)},
\end{equation}
where $\langle \,\cdot\,,\,\cdot\, \rangle$ is a pairing between $\h$ and $\h^*$. Here $\h$ is regarded as a dual space of $\h^*$, so $\langle h_i, \alpha_j \rangle = \alpha_j\left(h_i\right) = \langle \alpha_j, h_i \rangle$ for $i, j \in I$.

Fix an indeterminate $q$. Set $\bk = \mathbb{C}(q)$ to be the base field.
We put $q_i = q^{d_i}$ and
{\allowdisplaybreaks
\begin{gather*}
[m]_{q_i}=\frac{q_i^m-q_i^{-m}}{q_i-q_i^{-1}}\quad (m\in \Z_+),\quad
[m]_{q_i}!=[m]_{q_i}[m-1]_{q_i}\cdots [1]_{q_i}\quad (m\geq 1),\quad [0]_{q_i}!=1, \\
\begin{bmatrix} m \\ k \end{bmatrix}_{q_i} = \frac{[m]_{q_i}[m-1]_{q_i}\cdots [m-k+1]_{q_i}}{[k]_{q_i}}\quad (0\leq k\leq m).
\end{gather*}}
\!\!If there is no confusion, then we often write $[m]_i$ and $\begin{bmatrix} m \\ k \end{bmatrix}_i$ instead of  $[m]_{q_i}$ and $\begin{bmatrix} m \\ k \end{bmatrix}_{q_i}$ for simplicity, respectively.

The {\it quantum group} corresponding to $\g$, denoted by $U_q(\g)$, is the $\bk$-algebra generated by the symbols $e_i$, $f_i$ $(i \in I)$, and $q^h$ $(h \in \sfP^\vee)$ subject to the following relations:
{\allowdisplaybreaks
\begin{gather*}
	q^0 = 1, \quad q^h q^{h'} = q^{h+h'}, \\
	q^h e_i q^{-h}= q^{\langle h, \alpha_i \rangle} e_i,\quad q^h f_i q^{-h}= q^{-\langle h, \alpha_i \rangle} f_i,\quad 
	e_if_j-f_je_i=\delta_{ij}\frac{k_i-k_i^{-1}}{\qi-\qi^{-1}},\\
	\sum_{m=0}^{1-a_{ij}}(-1)^{m} \qbn{1-a_{ij}}{m}_i e_i^{1-a_{ij}-m}e_je_i^{m}=0, \quad (i \neq j), \\
	\,\sum_{m=0}^{1-a_{ij}}(-1)^{m} \qbn{1-a_{ij}}{m}_i f_i^{1-a_{ij}-m}f_jf_i^{m}=0, \quad (i \neq j),
\end{gather*}}
\!\!where $k_i = q^{d_ih_i}$.
We set $e_i^{(m)}=e_i^m/[m]_i!$ and $f_i^{(m)}=f_i^m/[m]_i!$ for $i \in I$ and $m \in \Z_+$.

Let $U_q^0(\g)$ be the $\bk$-subalgebra of $U_q(\g)$ generated by $q^h$ for $h \in \sfP^\vee$.
Also, we denote by $U_q^+(\g)$ (resp.~$U_q^-(\g)$) the $\bk$-subalgebra of $U_q(\g)$ generated by $e_i$ (resp.~$f_i$) for $i \in I$.
Then $U_q(\g)$ has the {\it triangular decomposition} 
\begin{equation} \label{eq:triangular decomp}
	U_q(\g) \cong U_q^+(\g) \otimes U_q^0(\g) \otimes U_q^-(\g)
\end{equation}
as a $\bk$-vector space (e.g.~see \cite{HK}).

An element $x \in U_q(\g)$ is said to be homogeneous if there exists $\xi \in \sfP$ such that $q^h x q^{-h} = q^{\langle h, \xi \rangle} x$ for all $h \in \sfP^\vee$, where we denote $\xi$ by ${\rm wt}(x)$ which is called the {\it weight} of $x$.

Let us adopt a Hopf algebra structure on $U_q(\g)$, where the comultiplication $\Delta$ and the antipode $S$ are given by 
{\allowdisplaybreaks
\begin{gather}
	\Delta(q^h)= q^h \ot q^h, \quad
	\Delta(e_i)= e_i \ot 1 + k_i \ot e_i, \quad
	\Delta(f_i)= f_i \ot k_i^{-1} + 1 \ot f_i, \label{eq:comultiplication} \\
	S(q^h)=q^{-h}, \ \ S(e_i)=-k_i^{-1} e_i, \ \  S(f_i)=-f_i k_i, \label{eq:antipode}
\end{gather}}
\!\!for $i \in I$.
We remark that $\Delta$ in \eqref{eq:comultiplication} is denoted by $\Delta_+$ in \cite{Kas91}, which coincides with the choice of $\Delta$ in \cite{HJ}.

\subsection{$q$-Boson algebras}
Let $\ms{B}_q(\g)$ be the $\bk$-algebra generated by the symbols $e_i'$ and $f_i$ for $i \in I$ satisfying the following relations:
{\allowdisplaybreaks
\begin{gather*}
	e_i'f_j = q_i^{-\langle h_i, \alpha_j \rangle} f_j e_i' + \de_{ij}, \\
	\sum_{m=0}^{1-a_{ij}}(-1)^{m} \qbn{1-a_{ij}}{m}_i e_i'^{1-a_{ij}-m}e_j'e_i'^{m}=0, \quad (i \neq j), \label{eq:qsr for e'} \\
	\sum_{m=0}^{1-a_{ij}}(-1)^{m} \qbn{1-a_{ij}}{m}_i f_i^{1-a_{ij}-m}f_jf_i^{m}=0, \quad (i \neq j), \label{eq:qsr for f}
	\end{gather*}}
\!\!for $i, j \in I$ (see~\cite{Kas91}).
Let $\ms{B}_q^+(\g)$ (resp.~$\ms{B}_q^-(\g)$) be the $\bk$-subalgebra of $\ms{B}_q(\g)$ generated by $e_i'$ (resp.~$f_i$) for $i \in I$.
Note that $\ms{B}_q^\pm(\g) \cong U_q^\pm(\g)$ as a $\bk$-algebra.

\begin{lem}\text{\!\!\!\rm \cite[Corollary 3.4.9]{Kas91}} \label{lem:negative half is irreducible module over boson algebra}
The $\bk$-algebra $U_q^-(\g)$ is an irreducible $\ms{B}_q(\g)$-module, where $f_i$ acts on $U_q^-(\g)$ by the left multiplication of $f_i \in U_q^-(\g)$, and $e_i'$ acts on $U_q^-(\g)$ inductively by $e_i'(1) = 0$, $e_i'(f_j) = \de_{ij}$, and 
\begin{equation} \label{eq:derivation ei'}
	e_i'(xy) = e_i'(x)y +  q_i^{\langle h_i, {\rm wt}(x) \rangle} x e_i'(y)
\end{equation}
for homogeneous elements $x, y \in U_q^-(\g)$.
\end{lem}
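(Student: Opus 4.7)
The plan is to carry out the verification in two stages: first, that the prescribed formulas endow $U_q^-(\g)$ with a well-defined $\ms{B}_q(\g)$-module structure; and second, that this module is irreducible via a non-degeneracy statement for the $e_i'$-action.

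For well-definedness, the left multiplication action of $f_i$ trivially satisfies the $f$-Serre relations, as those are defining relations of $U_q^-(\g)$. For $e_i'$, I would first observe that the initial data $e_i'(1)=0$, $e_i'(f_j)=\delta_{ij}$ together with the twisted Leibniz rule \eqref{eq:derivation ei'} extend uniquely to an endomorphism of the free tensor algebra on the $f_j$'s; descending to $U_q^-(\g)$ requires checking that each quantum Serre relation among the $f_j$'s is annihilated by $e_i'$, which is a direct induction using the Leibniz expansion. The mixed commutation $e_i'f_j = q_i^{-a_{ij}} f_j e_i' + \delta_{ij}$ as operators on $U_q^-(\g)$ follows immediately by applying the Leibniz rule to $f_j y$. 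For the Serre relations in $e_i'$, I would use a rigidity argument: let $R_{ij}$ denote the $(i,j)$-Serre expression in $e_i', e_j'$; then $R_{ij}(1)=0$, and using the mixed commutation repeatedly one shows $[R_{ij}, f_k]$ reduces (up to scalars) to an $f_k$-Serre combination, which vanishes in $U_q^-(\g)$. Since $U_q^-(\g)$ is generated from $1$ by left multiplication by the $f_k$'s, the operator $R_{ij}$ must vanish identically.

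For irreducibility, let $N$ be a nonzero $\ms{B}_q(\g)$-submodule. Since the actions of $f_i$ and $e_i'$ are weight-homogeneous, $N$ inherits the weight grading; pick a nonzero homogeneous $x\in N$ of minimal height. If $\mathrm{wt}(x)=0$, then $x\in\bk^{\times}\cdot 1$, hence $1\in N$ and left multiplication by the $f_i$'s produces all of $U_q^-(\g)$. If $\mathrm{wt}(x)\ne 0$, minimality forces $e_i'(x)=0$ for every $i\in I$. The key statement to rule this out is therefore: any nonzero homogeneous $x\in U_q^-(\g)$ of nonzero weight has $e_i'(x)\ne 0$ for some $i$. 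The standard route constructs a symmetric homogeneous bilinear form $(\cdot,\cdot)$ on $U_q^-(\g)$ normalized by $(1,1)=1$ and characterized by the adjointness $(f_i y, z) = (y, e_i'(z))$. If $e_i'(x)=0$ for all $i$, iterated adjointness gives $(f_{i_1}\cdots f_{i_k}, x) = (f_{i_2}\cdots f_{i_k}, e_{i_1}'(x)) = 0$ for any $k\ge 1$, while the weight grading forces $(1,x)=0$; thus $x$ lies in the radical of the form.

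The principal obstacle is establishing that this bilinear form exists and is non-degenerate. I would construct it by induction on height, defining $(1,1)=1$ and $(f_i y, z) = (y, e_i'(z))$ recursively, then verifying consistency against the quantum Serre relations defining $U_q^-(\g)$ using the already-established commutation $e_i'f_j = q_i^{-a_{ij}} f_j e_i' + \delta_{ij}$. Symmetry follows by a parallel recursion in the second argument. For non-degeneracy, observe that the radical of $(\cdot,\cdot)$ is stable under left multiplication by the $f_i$'s (by adjointness) and under $e_i'$ (again by adjointness, since the form is symmetric); hence the radical is a $\ms{B}_q(\g)$-submodule, and since it avoids $1$ by the normalization $(1,1)=1$, its intersection with the weight-zero component is trivial, which inductively propagates to every weight. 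This closes the irreducibility argument.
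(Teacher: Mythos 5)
The paper itself offers no proof of this lemma---it is quoted directly from Kashiwara's Corollary 3.4.9---so the benchmark is the standard argument in that reference. Your overall architecture (well-definedness of the $\ms{B}_q(\g)$-action, then irreducibility reduced to the kernel statement $\bigcap_{i}\ker e_i'=\bk\cdot 1$, which is in turn deduced from a non-degenerate symmetric form satisfying $(f_iy,z)=(y,e_i'(z))$) is exactly the standard one. The problem is that your proof of the single genuinely hard ingredient, the non-degeneracy of that form, is circular. You argue that the radical $R$ is a graded subspace stable under left multiplication by the $f_i$ and under the $e_i'$, that $R_0=0$ because $(1,1)=1$, and that this ``inductively propagates to every weight.'' Unwind that induction: assuming $R_\nu=0$ for all $\nu$ of height $<k$ and taking $0\neq x\in R_\mu$ with ${\rm ht}(-\mu)=k$, the $e_i'$-stability of $R$ gives $e_i'(x)\in R_{\mu+\alpha_i}=0$ for every $i$, and to reach a contradiction you must invoke precisely the statement that a homogeneous element of nonzero weight killed by all $e_i'$ is zero---which is the kernel lemma you intended to deduce \emph{from} non-degeneracy. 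Indeed ``every graded $e'$-stable subspace avoiding $1$ is zero'' is literally equivalent to ${\rm rad}=0$, so the submodule property of the radical cannot close the argument by itself. Non-degeneracy requires an external input: the standard routes either identify the form with the Drinfeld--Hopf pairing and show the quantum Serre relations exhaust the radical of the form on the free algebra (Lusztig's Theorem 33.1.3, which rests on integrable highest-weight modules and a dimension comparison), or compute the Gram matrix in a PBW basis.

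A secondary, more minor point: in verifying the Serre relations for the $e_i'$, the $q$-commutator of the Serre operator $R_{ij}$ with left multiplication by $f_k$ does not reduce to ``an $f_k$-Serre combination''; the correction terms are lower-degree expressions in the $e'$'s whose coefficients vanish by $q$-binomial identities of the type used in the proof of Theorem \ref{thm:1st main}. The strategy itself ($R_{ij}$ kills $1$ and $q$-commutes with every $f_k$, hence vanishes on the $f$-generated module) is sound, but the stated mechanism is not what makes the computation work.
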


\begin{rem} \label{rem:eistar}
{\em 
	There exists another $q$-derivation of $U_q^-(\g)$ (see \cite[Lemma 8.2.1]{KKKO18}), which is slightly different from a $q$-derivation $e_i''$ in \cite{Kas91}.
	More precisely, for $i \in I$, let $e_i^\star$ be the $q$-derivation of $U_q^-(\g)$, which is defined by $e_i^\star(f_j) = \delta_{ij}$ $(j \in I)$ and, for homogeneous $x, y \in U_q^-(\g)$, 
	$$
		e_i^\star(xy) = q_i^{\langle h_i, {\rm wt}(y) \rangle} e_i^\star(x)y + x e_i^\star(y).
	$$
	Following \cite{Kas91}, we have
	 $e_i' e_i^\star  = e_i^\star e_i'$ 
	and $e_i^\star = *\circ e_i'\circ *$,
	where $*$ is the antiautomorphism of $U_q(\g)$ as a $\bk$-algebra given by $e_i^* = e_i$, $f_i^* = f_i$, $(q^h)^* = q^{-h}$ for $i \in I$ and $h \in \sfP^\vee$.
}
\end{rem}

Put 
\begin{equation} \label{eq: def of mbei'}
\mb{e}'_i:=-(q_i-q_i^{-1})k_ie_i
\end{equation}
for $i \in I$.
Let $\tilde{U}_q(\g)$ be the $\bk$-subalgebra of $U_q(\mf{g})$ generated by $\mb{e}'_i, k_i$, and $f_i$ for $i \in I$.
We denote by $\tilde{U}_q^+(\g)$ (resp.~$\tilde{U}^0_q(\mf{g})$) the $\bk$-subalgebra of $\tilde{U}_q(\g)$ generated by $\mb{e}'_i$ (resp.~$k_i$) for $i \in I$.
Note that $k_i$ is not invertible in $\tilde{U}_q(\g)$.
\begin{prop} \!{\rm(\!\!\cite{Lu10})} \label{prop:properties of tUqg} 
\mbox{}
\begin{itemize}
	\item[(a)] There exists an isomorphism of $\bk$-algebras between $U_q^+(\g)$ and $\tilde{U}_q^+(\g)$ by $e_i \mapsto \mb{e}_i'$.
		
	\item[(b)] 
	By the multiplication on $\tilde{U}_q(\g)$, we have
	\begin{equation*}
		\tilde{U}_q(\g)\,\cong\, \tilde{U}_q^+(\g) \ot \tilde{U}_q^0(\g) \ot U_q^-(\g)
	\end{equation*}
	as a $\bk$-vector space.

	\item[(c)] For $i, j \in I$, we have \,$\mb{e}'_if_j=q_i^{-\langle h_i,\alpha_j \rangle} f_j\mb{e}'_i +\de_{ij}(1-k_i^2)$.
\end{itemize}
\end{prop}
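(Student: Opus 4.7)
\noindent\emph{Part (c).}
I would begin with this identity since it reduces to a one-line manipulation. Left-multiplying the defining commutation $e_if_j - f_je_i = \delta_{ij}(k_i - k_i^{-1})/(q_i - q_i^{-1})$ by $-(q_i - q_i^{-1})k_i$ and then using $k_if_j = q_i^{-a_{ij}}f_jk_i$ (from $q^hf_jq^{-h} = q^{-\langle h, \alpha_j\rangle}f_j$) to push $k_i$ past $f_j$ on the left, the identity collapses to $\mb{e}'_if_j - q_i^{-a_{ij}}f_j\mb{e}'_i = \delta_{ij}(1 - k_i^2)$, which is exactly (c).

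\noindent\emph{Part (a).}
My plan is to show that $e_i \mapsto \mb{e}'_i$ extends to an algebra isomorphism $\phi \colon U_q^+(\g) \to \tilde{U}_q^+(\g)$. For well-definedness I would verify the quantum Serre relations for the $\mb{e}'_i$ by a direct calculation: setting $n = 1 - a_{ij}$ and moving all $k$'s to the left in $(\mb{e}'_i)^{n-m}\mb{e}'_j(\mb{e}'_i)^m$ using $k_ie_i = q_i^2e_ik_i$, $k_ie_j = q_i^{a_{ij}}e_jk_i$, and the symmetrization $q_j^{a_{ji}} = q_i^{a_{ij}}$ (which comes from $\sfD\sfA$ symmetric), one arrives at
\begin{equation*}
(\mb{e}'_i)^{n-m}\mb{e}'_j(\mb{e}'_i)^m \,=\, \xi \cdot k_i^n k_j \cdot e_i^{n-m}e_je_i^m
\end{equation*}
with a scalar $\xi$ that is independent of $m$. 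Thus the alternating sum with coefficients $(-1)^m\qbn{n}{m}_i$ factors out $\xi\, k_i^n k_j$ and reduces to the classical Serre relation for $e_i$, which vanishes. For injectivity I would introduce the algebra automorphism $\tau$ of $U_q(\g)$ defined by $\tau(e_i) = k_ie_i$, $\tau(f_i) = f_ik_i^{-1}$, $\tau(q^h) = q^h$; checking that all defining relations are preserved is routine, the main point being that in the $ef - fe$ relation the cross $q$-powers from $k_if_j = q_i^{-a_{ij}}f_jk_i$ and $e_ik_j^{-1} = q_j^{a_{ji}}k_j^{-1}e_i$ cancel by the $\sfD\sfA$-symmetry. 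Since $\tau$ is an automorphism, $\tau|_{U_q^+(\g)}$ is injective with image equal to the subalgebra generated by the $k_ie_i$, which is precisely $\tilde{U}_q^+(\g)$. The map $\phi$ differs from $\tau|_{U_q^+(\g)}$ only by the trivial rescaling automorphism $e_i \mapsto -(q_i - q_i^{-1})e_i$ of $U_q^+(\g)$, so $\phi$ is likewise an isomorphism.

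\noindent\emph{Part (b).}
For spanning, the commutation relations (c), together with $k_j\mb{e}'_i = q_j^{a_{ji}}\mb{e}'_ik_j$ and $k_jf_i = q_j^{-a_{ji}}f_ik_j$, allow reordering any word in $\mb{e}'_i, k_i, f_j$ into the form $\mb{e}'_\mu\, k_\nu\, f_\lambda$; the correction $\delta_{ij}(1 - k_i^2)$ in (c) lands in $\tilde{U}_q^0(\g)$ and is absorbed. For linear independence of the product map $\tilde{U}_q^+(\g) \otimes \tilde{U}_q^0(\g) \otimes U_q^-(\g) \to \tilde{U}_q(\g)$, I would compare against the triangular decomposition of $U_q(\g)$. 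The same bookkeeping as in (a) yields $\mb{e}'_{i_1}\cdots\mb{e}'_{i_n} = c_{\underline{i}}\, q^{h(\underline{i})}\, e_{i_1}\cdots e_{i_n}$ with $c_{\underline{i}} \neq 0$ and $h(\underline{i}) = \sum_k d_{i_k}h_{i_k}$. Fixing a PBW basis $\{e_\mu\}$ of $U_q^+(\g)$, part (a) transports it to a basis $\{\mb{e}'_\mu\}$ of $\tilde{U}_q^+(\g)$ satisfying $\mb{e}'_\mu = c_\mu\, q^{h(\mu)} e_\mu$. After moving $k_\nu$ past $e_\mu$, the product $\mb{e}'_\mu k_\nu f_\lambda$ equals a nonzero scalar multiple of $e_\mu \cdot q^{h(\mu)+h(\nu)} \cdot f_\lambda$; since $\nu \mapsto h(\nu)$ is injective on $(\Z_{\geq 0})^I$, distinct triples $(\mu,\nu,\lambda)$ produce distinct PBW basis vectors of $U_q(\g)$, giving injectivity.

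I expect the main technical obstacle to be the $q$-power bookkeeping in the Serre computation of part (a): with $n+1 = 2 - a_{ij}$ generators interacting, a careful accounting of the cross $q$-powers is needed, and the $m$-dependent contributions must cancel exactly — this is the spot where the symmetry $d_ia_{ij} = d_ja_{ji}$ of the Cartan datum plays its essential role. The remaining pieces (part (c) and part (b)) are then formal consequences of the defining relations and the standard triangular decomposition of $U_q(\g)$.
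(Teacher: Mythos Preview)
The paper does not give its own proof of this proposition; it is stated with a bare citation to Lusztig \cite{Lu10}. Your direct verification is correct and entirely self-contained. A few remarks on the details:

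In part (a), the key computation that the prefactor $\xi$ is $m$-independent does go through: writing $a=n-m$, $b=m$ and collecting the $q$-powers from moving all Cartan elements to the left in $(k_ie_i)^a(k_je_j)(k_ie_i)^b$, one finds the exponent $-a(a-1)-b(b-1)-2ab-na_{ij}$, and the identity $-a(a-1)-b(b-1)-2ab=-(a+b)^2+(a+b)=-n(n-1)$ kills the $m$-dependence. The symmetrizability $d_ia_{ij}=d_ja_{ji}$ is used exactly where you say, to merge the $q_j^{a_{ji}}$ coming from $k_j$ passing $e_i$ with the $q_i$-powers. Your automorphism $\tau$ is well known (it is, up to normalization, conjugation by an element of the Cartan in a suitable completion) and provides a clean route to injectivity.

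In part (b), the linear-independence argument is sound: the crucial point is that for fixed $\mu$ the map $\nu\mapsto h(\mu)+h(\nu)$ is injective because the simple coroots $h_i$ are linearly independent in $\sfP^\vee$ and the $d_i$ are nonzero, so distinct triples $(\mu,\nu,\lambda)$ land on distinct basis vectors $e_\mu\, q^{h(\mu)+h(\nu)}\, f_\lambda$ of the full triangular decomposition \eqref{eq:triangular decomp}.
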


Let $J$ be the two-sided ideal of $\tilde{U}_q(\g)$ generated by $k_i$ for all $i \in I$.
We define 
\begin{equation} \label{eq:mfU}
	\mf{U} = \tilde{U}_q(\g) / J.
\end{equation}
By abuse of notation, let us write $\mb{e}_i'$ and $f_i$ for the image of $\mb{e}_i'$ and $f_i$ in $\mf{U}$, respectively.
Then we denote by $\mf{U}^+$ (resp.~$\mf{U}^-$) the $\bk$-subalgebra of $\mf{U}$ generated by $\mb{e}_i'$ (resp.~$f_i$) for $i \in I$.

\begin{lem}{\em \!\!\!\cite{Lu10}} \label{lem:realization of Bqg}
There exists an isomorphism of $\bk$-algebras from $\mf{U}$ to $\ms{B}_q(\g)$ such that $\mb{e}_i' \mapsto e_i'$ and $f_i \mapsto f_i$ for $i \in I$.
\end{lem}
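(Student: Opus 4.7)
The plan is to build the map $\Phi : \ms{B}_q(\g) \to \mf{U}$ specified by $e_i' \mapsto \mb{e}'_i$ and $f_i \mapsto f_i$, verify that it is a well-defined algebra homomorphism, and then show bijectivity by matching triangular decompositions on the two sides.

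First I would check that $\Phi$ respects the defining relations of $\ms{B}_q(\g)$. The quantum Serre relations among the $f_i$ already hold in $U_q^-(\g) \subset \mf{U}$. By Proposition \ref{prop:properties of tUqg}(a) there is a $\bk$-algebra isomorphism $U_q^+(\g) \cong \tilde{U}_q^+(\g)$ with $e_i \mapsto \mb{e}'_i$, so the quantum Serre relations among the $\mb{e}'_i$ hold in $\tilde{U}_q^+(\g)$, and therefore in its image $\mf{U}^+$ inside $\mf{U}$. For the remaining mixed relation $e_i' f_j = q_i^{-\langle h_i, \alpha_j \rangle} f_j e_i' + \delta_{ij}$, I would invoke Proposition \ref{prop:properties of tUqg}(c), namely $\mb{e}'_i f_j = q_i^{-\langle h_i, \alpha_j \rangle} f_j \mb{e}'_i + \delta_{ij}(1-k_i^2)$ in $\tilde{U}_q(\g)$; after quotienting by $J$, the term $k_i^2$ vanishes and we recover the boson relation exactly. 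Thus $\Phi$ is a well-defined algebra morphism, and it is surjective because the images of the $k_i$ vanish in $\mf{U}$, so $\mf{U}$ is generated by the classes of the $\mb{e}'_i$ and the $f_i$.

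For injectivity I would exploit the triangular decompositions. The $q$-boson algebra admits a PBW-type decomposition $\ms{B}_q(\g) \cong \ms{B}_q^+(\g) \otimes \ms{B}_q^-(\g)$ as a $\bk$-vector space, which follows from the mixed commutation relation (which lets one straighten any monomial into normally-ordered form) together with the known bases of $\ms{B}_q^\pm(\g) \cong U_q^\mp(\g)$. On the other side, Proposition \ref{prop:properties of tUqg}(b) gives
\begin{equation*}
\tilde{U}_q(\g) \cong \tilde{U}_q^+(\g) \otimes \tilde{U}_q^0(\g) \otimes U_q^-(\g),
\end{equation*}
and passing to the quotient by $J$ yields $\mf{U} \cong \mf{U}^+ \otimes \mf{U}^-$ as a $\bk$-vector space. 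Since by construction $\Phi$ carries $\ms{B}_q^-(\g) \cong U_q^-(\g)$ onto $\mf{U}^-$ and carries $\ms{B}_q^+(\g) \cong U_q^+(\g)$ onto $\mf{U}^+$ (via Proposition \ref{prop:properties of tUqg}(a)), comparing the two tensor decompositions shows that $\Phi$ is a bijection.

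The delicate point, and the main obstacle, is to justify that the quotient by $J$ cleanly respects the triangular decomposition of $\tilde{U}_q(\g)$, i.e.\ that
\begin{equation*}
J \cap \bigl(\tilde{U}_q^+(\g) \otimes \tilde{U}_q^0(\g) \otimes U_q^-(\g)\bigr) = \tilde{U}_q^+(\g) \otimes J_0 \otimes U_q^-(\g),
\end{equation*}
where $J_0 \subset \tilde{U}_q^0(\g) = \bk[k_i \mid i \in I]$ is the augmentation ideal generated by the $k_i$. This is what identifies $\mf{U}^0$ with $\bk$ and lets one conclude $\mf{U} \cong \mf{U}^+ \otimes \mf{U}^-$. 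It reduces to the statement that left and right multiplication by each $k_i$ preserves the tensor decomposition up to commuting past the $\mb{e}'_j$ and $f_j$, which follows from the weight-homogeneity relations $k_i \mb{e}'_j = q_i^{\langle h_i, \alpha_j \rangle} \mb{e}'_j k_i$ and $k_i f_j = q_i^{-\langle h_i, \alpha_j \rangle} f_j k_i$ inherited from $U_q(\g)$.
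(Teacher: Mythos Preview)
Your argument is correct. The paper does not give its own proof of this lemma; it simply cites \cite[Chapter 15]{Lu10} and states the result. Your approach --- verifying the defining relations pass to the quotient (using Proposition \ref{prop:properties of tUqg}(c) and the vanishing of $k_i$ in $\mf{U}$) and then matching the triangular decompositions on both sides --- is exactly the standard route, and is essentially what one finds in Lusztig's treatment. The point you flag as delicate, namely that $J = \tilde{U}_q^+(\g) \otimes J_0 \otimes U_q^-(\g)$ under the decomposition of Proposition \ref{prop:properties of tUqg}(b), is indeed the crux, and your justification via the quasi-commutation of each $k_i$ with the other generators is the right one: since $k_i$ commutes with every generator up to a nonzero scalar, any element of $J$ can be rewritten so that all $k$-factors sit in the middle tensor slot, and conversely any element with a nontrivial $J_0$-component is visibly in $J$.
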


In summary, we have
\begin{equation*}
	\xymatrix@R=0.5em @C=-0.45em{
	U_q(\g) & \supset & \tilde{U}_q(\g) \ar@{->>}[drr] &  &  & \\
	& & \ms{B}_ q(\g) & \cong & \mf{U} &:=\tilde{U}_q(\g)/J
	}
\end{equation*}
By using this realization of $\ms{B}_q(\g)$, we shall consider the braid group symmetries in \cite{Lu10} on $\ms{B}_q(\g)$ in the subsequent section.

\begin{rem}
{\em
	The $q$-boson algebra $\ms{B}_q(\mathfrak{sl}_2)$ is related to the {\it $q$-oscillator algebra} \cite{BLZ}.
	More precisely, the $q$-oscillator algebra denoted by $\ms{U}_q^+(\mathfrak{sl}_2)$, is a $\bk$-algebra generated by $e, f, k^{\pm 1}$ with relations
	$$
		ke = q^2 ek, \quad
		kf = q^{-2} fk, \quad
		kk^{-1} = k^{-1}k = 1, \quad
		[e,f] = \frac{k}{q-q^{-1}}.
	$$
	Here we note that the relation involving $[e,f]$ is modified compared to one in $U_q(\mathfrak{sl}_2)$. Then the $\bk$-subalgebra of $\ms{U}_q^+(\mathfrak{sl}_2)$ generated by $f$ and $e' := (q-q^{-1})k^{-1}e$ is isomorphic to $\ms{B}_q(\mathfrak{sl}_2)$. 
	On the other hand, it is known in \cite{BLZ} that there exists a homomorphism from the $\bk$-subalgebra of $U_q(\widehat{\mathfrak{sl}}_2)$ generated by $e_i, k_i^{\pm 1}$ for $i \in I$ (called the {\it Borel subalgebra} of $U_q(\widehat{\mathfrak{sl}}_2)$) to $\ms{U}_q^+(\mathfrak{sl}_2)$, which looks similar to the evaluation morphism $U_q(\widehat{\mathfrak{sl}}_2) \rightarrow U_q(\mathfrak{sl}_2)$ \cite{J} (see also \cite[Remark 2.2 and Proposition 3.8]{Her23}).
}	
\end{rem}

Let
\begin{equation} \label{eq:def of M}
\begin{split}
	M = 
	\frac{\tilde{U}_q(\g)}{\displaystyle \,\,\,\sum_{i \in I} \tilde{U}_q(\g)\mb{e}_i' + \sum_{i \in I} \tilde{U}_q(\g)k_i\,\,\,}
\end{split}
\end{equation}
be the left $\tilde{U}_q(\g)$-module by left multiplication.
Since each $k_i$ acts on $M$ as zero due to 
the relations $k_i \mb{e}_j' = q_i^{\langle h_i, \alpha_j \rangle} \mb{e}_j' k_i$ and $k_i f_j = q_i^{-\langle h_i, \alpha_j \rangle}f_j k_i$ for $i, j \in I$,
we regard $M$ as a $\mf{U}$-module, where 
\begin{equation*}
	{\bf x} \cdot \overline{m} = \overline{x m},
\end{equation*}
where ${\bf x} = x + J \in \mf{U}$ and $\overline{m} \in M$.

\begin{lem} \label{lem:M is isom to Uq- as Bq-mod}
$M$ is isomorphic to $U_q^-(\g)$ as a $\ms{B}_q(\g)$-module.
\end{lem}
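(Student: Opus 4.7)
The natural strategy is to show that the $\bk$-linear map $\varphi : U_q^-(\g) \to M$, $u \mapsto \overline{u}$, given by the composition of the inclusion $U_q^-(\g) \hookrightarrow \tilde{U}_q(\g)$ with the quotient $\tilde{U}_q(\g) \twoheadrightarrow M$, is a $\ms{B}_q(\g)$-module isomorphism. Intertwining with $f_i$ is automatic since both actions are left multiplication. The main technical content is the identity
\begin{equation*}
\mb{e}'_i\, u \,\equiv\, e'_i(u) \pmod{L}, \qquad u \in U_q^-(\g),
\end{equation*}
which I would prove by induction on the length of a monomial $u = f_{j_1}\cdots f_{j_k}$. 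Applying Proposition \ref{prop:properties of tUqg}(c) to the leftmost factor yields
\begin{equation*}
\mb{e}'_i\, u \;=\; q_i^{-\langle h_i, \alpha_{j_1}\rangle}\, f_{j_1}\, \mb{e}'_i\,(f_{j_2}\cdots f_{j_k}) \;+\; \delta_{i,j_1}(1-k_i^2)(f_{j_2}\cdots f_{j_k}).
\end{equation*}
The induction hypothesis handles the first summand (using $f_{j_1}\, L \subseteq L$), while the $-k_i^2$ part of the correction commutes to the right modulo a $q$-scalar, landing in $\tilde{U}_q(\g)\, k_i \subseteq L$. What survives exactly matches the recursion $e'_i(xy) = e'_i(x)y + q_i^{\langle h_i, {\rm wt}(x)\rangle} x\, e'_i(y)$ of Lemma \ref{lem:negative half is irreducible module over boson algebra}.

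For injectivity I would produce a left inverse. Pulling back the $\ms{B}_q(\g)$-action on $U_q^-(\g)$ along $\tilde{U}_q(\g) \twoheadrightarrow \mf{U} \cong \ms{B}_q(\g)$ makes $U_q^-(\g)$ into a $\tilde{U}_q(\g)$-module on which each $k_i$ acts as zero. The $\tilde{U}_q(\g)$-linear evaluation map $y \mapsto y \cdot 1$ annihilates $\tilde{U}_q(\g)\,\mb{e}'_i$ (since $e'_i \cdot 1 = 0$) and $\tilde{U}_q(\g)\, k_i$ (since $k_i$ acts as zero), hence descends to a $\ms{B}_q(\g)$-module map $\Psi : M \to U_q^-(\g)$ with $\Psi(\overline{u}) = u\cdot 1 = u$. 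Thus $\Psi\circ\varphi = \mathrm{id}$, proving injectivity.

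Surjectivity follows from the triangular decomposition of Proposition \ref{prop:properties of tUqg}(b): write any $y \in \tilde{U}_q(\g)$ as $y = ehu$ with $e \in \tilde{U}^+_q(\g)$, $h \in \tilde{U}^0_q(\g)$, $u \in U_q^-(\g)$, so $\overline{y} = e \cdot h \cdot \overline{u}$. Splitting $h = \epsilon^0(h) + h_+$ (constant term plus the ideal generated by the $k_i$'s) and commuting $h_+$ past $u$ shows $h_+ u \in \tilde{U}_q(\g)\, k_i \subseteq L$, so $h\cdot\overline{u} = \epsilon^0(h)\,\overline{u}$. Then $e$, a polynomial in the $\mb{e}'_i$, acts by the corresponding polynomial in the $q$-derivations $e'_i$ by the first step, hence preserves $\varphi(U_q^-(\g))$. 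Therefore $\overline{y}\in\varphi(U_q^-(\g))$.

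The main obstacle is the derivation identity $\mb{e}'_i\, u \equiv e'_i(u) \pmod{L}$: the $q$-power bookkeeping in the induction must be done carefully, and one must verify that every $k_i$-type correction term, after being commuted into the rightmost position past the remaining $f$'s, genuinely falls into the left ideal $L$.
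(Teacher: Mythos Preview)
Your proposal is correct and follows essentially the same approach as the paper: both use the canonical map $U_q^-(\g)\to M$, check compatibility with $f_i$ trivially, and derive compatibility with $e_i'$ from Proposition~\ref{prop:properties of tUqg}(c) together with the derivation formula in Lemma~\ref{lem:negative half is irreducible module over boson algebra}. The only difference is that the paper invokes the triangular decomposition of Proposition~\ref{prop:properties of tUqg}(b) once to conclude that $\iota$ is a $\bk$-linear isomorphism outright, whereas you argue injectivity and surjectivity separately (your left-inverse construction is correct but unnecessary given (b)).
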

\begin{proof}
By Proposition \ref{prop:properties of tUqg}(b)--(c), we have $M \,\cong\, U_q^-(\g)$ as a $\bk$-vector space, so the canonical projection $\iota$ from $U_q^-(\g)$ to $M$ gives an isomorphism of $\bk$-vector spaces. Then it follows from Lemma \ref{lem:negative half is irreducible module over boson algebra} and Proposition \ref{prop:properties of tUqg}(c) that 
the following diagram commutes:
\begin{equation*}
\begin{split}
\xymatrix@C=5em@R=2em{
	U_q^-(\g) \ar@{->}[r]^\iota \ar@{->}[d]_{x} & M \ar@{->}[d]^{\bf x} \\
	U_q^-(\g) \ar@{->}[r]_\iota & M
}
\end{split}
\end{equation*}
where ${\bf x} \in \mf{U}$ is the image of $x \in \ms{B}_q(\g)$ through Lemma \ref{lem:realization of Bqg}, and the vertical arrows denote the actions of $x$ and $\bf x$, respectively.
\end{proof}

\subsection{Braid group symmetries on $q$-boson algebra}
Let $W$ be the Weyl group of $\g$, which is the subgroup of ${\rm GL}(\h^*)$ generated by the simple reflection $s_i$ given by $s_i(\lambda) = \lambda - \langle h_i,\, \lambda \rangle\, \alpha_i$ for $i \in I$ and $\lambda \in \h^*$.
Let $R(w)=\{\,(i_1,\dots,i_\ell) \,\,|\,\, i_j \in I \,\,\, \text{and} \,\, w=s_{i_1}\dots s_{i_\ell}\,\}$ be the set of reduced expressions of $w$, where $\ell$ is the length of $w$.

For $i \in I$, let $T_i$ be the $\bk$-algebra automorphism of $U_q({\mf g})$ in \cite{Lu10}, which satisfies
{\allowdisplaybreaks
\begin{align}
&
T_i(q^h) = q^{s_i(h)}, \label{eq:Ti(qh)} \\
& 
T_i(e_i) = -f_i k_i, \quad\,\,\,\,\,
T_i(e_j) = \sum_{r+s=-a_{ij}} (-1)^r q_i^{-r} e_i^{(s)} e_j e_i^{(r)}\quad (i\neq j), \label{eq:Ti(ej)} \\
&T_i(f_i) = -k_i^{-1}e_i, \quad 
T_i(f_j) = \sum_{r+s=-a_{ij}} (-1)^r q_i^{ r} f_i^{(r)} f_j f_i^{(s)}\quad (i\neq j), \label{eq:Ti(fj)}
\end{align}
}\!\!
for $j \in I$ and $h \in \sfP^\vee \subset \h$, where $s_i(h) = h - \langle h, \alpha_i \rangle h_i$.
Note that $T_i=T''_{i,1}$ in \cite{Lu10}.

\begin{prop}\label{prop:braid}
For $i\in I$, we have $T_i(\tilde{U}_q^+(\mf{g}))\subset \tilde{U}_q(\mf{g})$. \end{prop}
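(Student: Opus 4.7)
The plan is to reduce the claim to showing $T_i(\mb{e}'_j) \in \tilde{U}_q(\g)$ for each $j \in I$. Since $T_i$ is a $\bk$-algebra automorphism of $U_q(\g)$ and $\tilde{U}_q^+(\g)$ is generated as a $\bk$-algebra by $\{\mb{e}'_j\}_{j \in I}$, this suffices.

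I would handle the case $j = i$ first. Formulas \eqref{eq:Ti(qh)}--\eqref{eq:Ti(ej)} give $T_i(k_i) = q^{-d_i h_i} = k_i^{-1}$ and $T_i(e_i) = -f_i k_i$, hence
\begin{equation*}
T_i(\mb{e}'_i) = -(q_i - q_i^{-1})\, k_i^{-1}(-f_i k_i) = q_i^{2}(q_i - q_i^{-1})\, f_i,
\end{equation*}
using $k_i^{-1} f_i k_i = q_i^{2} f_i$. This is a scalar multiple of $f_i$ and therefore lies in $\tilde{U}_q(\g)$.

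For $j \neq i$, I would first observe $T_i(k_j) = k_j k_i^{-a_{ij}}$ from $s_i(d_j h_j) = d_j h_j - d_j\langle h_j, \alpha_i\rangle h_i$ together with the symmetry $d_j a_{ji} = d_i a_{ij}$. Since $-a_{ij} \geq 0$ for $j \neq i$, no inverse of $k_i$ appears, so $T_i(k_j) \in \tilde{U}^0_q(\g)$. Combined with \eqref{eq:Ti(ej)},
\begin{equation*}
T_i(\mb{e}'_j) = -(q_j - q_j^{-1})\, k_j k_i^{-a_{ij}} \sum_{r+s=-a_{ij}} (-1)^r q_i^{-r}\, e_i^{(s)} e_j e_i^{(r)}.
\end{equation*}
The key observation is that the prefactor $k_j k_i^{-a_{ij}}$ carries exactly one $k_i$ for each factor $e_i$ and one $k_j$ for $e_j$ in every summand. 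Using the commutation relations $k_i e_i = q_i^{2} e_i k_i$, $k_i e_j = q_i^{a_{ij}} e_j k_i$, $k_j e_i = q_i^{a_{ij}} e_i k_j$, together with the identity $(k_i e_i)^m = q_i^{-m(m-1)} k_i^m e_i^m$ (easy induction), I expect to rewrite each summand as an explicit $\bk$-scalar multiple of $(\mb{e}'_i)^{s}\, \mb{e}'_j\, (\mb{e}'_i)^{r}$. Hence $T_i(\mb{e}'_j) \in \tilde{U}_q^+(\g) \subset \tilde{U}_q(\g)$.

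The only real obstacle is the bookkeeping of $q$-scalars in the second case, which is routine. Combining both cases with the fact that $T_i$ is a $\bk$-algebra homomorphism then yields the claim.
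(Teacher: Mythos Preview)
Your proposal is correct and follows essentially the same approach as the paper: reduce to the generators $\mb{e}'_j$, handle $j=i$ by direct computation yielding $(q_i-q_i^{-1})q_i^2 f_i$, and for $j\neq i$ move the Cartan prefactor $k_j k_i^{-a_{ij}}$ through the $e$'s to express $T_i(\mb{e}'_j)$ as a $\bk$-linear combination of $(\mb{e}'_i)^{(s)}\mb{e}'_j(\mb{e}'_i)^{(r)}$. The paper simply records the resulting closed formula
\[
T_i(\mb{e}'_j)=(-1)^{a_{ij}}(q_i-q_i^{-1})^{a_{ij}}\sum_{r+s=-a_{ij}}(-1)^r q_i^{2rs+ra_{ij}-r}q_j^{sa_{ji}}\,\mb{e}_i'^{(s)}\mb{e}'_j\mb{e}_i'^{(r)},
\]
which is exactly the end product of the ``routine bookkeeping'' you left to the reader.
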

\begin{proof}
By \eqref{eq:Ti(qh)}--\eqref{eq:Ti(fj)}, we have
\begin{equation} \label{eq:Ti acts on bej'}
	T_i(\mb{e}_j') = 
	\begin{cases}
		\left(q_i-q_i^{-1}\right)q_i^2f_i & \text{if $i = j$,} \\
		\displaystyle (-1)^{a_{ij}}\left( q_i-q_i^{-1} \right)^{a_{ij}} \sum_{r+s=-a_{ij}}(-1)^rq_i^{a_{ij}(1-s)-r}q_j^{sa_{ji}}\mb{e}_i'^{(s)}\mb{e}_j'\mb{e}_i'^{(r)} 
		& \text{if $i \neq j$,}
	\end{cases}
\end{equation}
where the divided power of $\mathbf{e}_i'$ $(i \in I)$ is understood as usual, that is, $\mathbf{e}_i'^{(m)}=(\mathbf{e}_i')^m/[m]_i!$.
This completes the proof.
\end{proof}

\begin{cor}\label{cor:braid on q-bosons}
For $i\in I$, we have a homomorphism of $\bk$-algebras
\begin{equation*}
\text{\em \texttt T}_i : \ms{B}^+_q(\mf{g}) \longrightarrow \ms{B}_q(\mf{g}) 
\end{equation*}
such that
\begin{equation*}
\begin{split}
\text{\em \texttt T}_i(e'_i) &=(q_i-q_i^{-1})q_i^2f_i,\\
\text{\em \texttt T}_i(e'_j) &= (-1)^{a_{ij}}( q_i-q_i^{-1} )^{a_{ij}} \sum_{r+s=-a_{ij}}(-1)^rq_i^{a_{ij}(1-s)-r}q_j^{sa_{ji}}{e'_i}^{(s)}e'_j{e'_i}^{(r)} \quad (i \neq j).
\end{split}
\end{equation*}
\end{cor}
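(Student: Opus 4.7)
The plan is to descend Lusztig's braid operator $T_i$ on $U_q(\g)$ to the $q$-boson algebra through the subquotient realization $\ms{B}_q(\g) \cong \mf{U} = \tilde{U}_q(\g)/J$ furnished by Lemma \ref{lem:realization of Bqg}. The key technical input, Proposition \ref{prop:braid}, already guarantees that $T_i$ sends $\tilde{U}_q^+(\g)$ into $\tilde{U}_q(\g)$; granted this, the corollary is essentially formal.

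First I would restrict $T_i$ to $\tilde{U}_q^+(\g)$ to obtain, by Proposition \ref{prop:braid}, an algebra homomorphism $T_i \colon \tilde{U}_q^+(\g) \to \tilde{U}_q(\g)$. Composing with the canonical projection $\pi \colon \tilde{U}_q(\g) \twoheadrightarrow \mf{U}$ and with the isomorphism $\mf{U} \xrightarrow{\sim} \ms{B}_q(\g)$ of Lemma \ref{lem:realization of Bqg} yields an algebra homomorphism from $\tilde{U}_q^+(\g)$ into $\ms{B}_q(\g)$.

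Next I would identify the source $\tilde{U}_q^+(\g)$ with $\ms{B}_q^+(\g)$. Both algebras are generated by symbols indexed by $I$ subject only to the quantum Serre relations — the former by Proposition \ref{prop:properties of tUqg}(a), the latter by definition — and under the realization isomorphism $\mf{U} \cong \ms{B}_q(\g)$ the generator $\mb{e}_i'$ corresponds to $e_i'$. Pre-composing with the resulting isomorphism $\ms{B}_q^+(\g) \cong \tilde{U}_q^+(\g)$ then produces the desired algebra homomorphism ${\texttt T}_i \colon \ms{B}_q^+(\g) \to \ms{B}_q(\g)$.

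Finally, to read off the explicit formulas in the statement I would simply invoke \eqref{eq:Ti acts on bej'}. The expression $(q_i - q_i^{-1})q_i^2 f_i$ carries no $k$-factor, and for $j \neq i$ each summand $\mb{e}_i'^{(s)} \mb{e}_j' \mb{e}_i'^{(r)}$ already lies in $\tilde{U}_q^+(\g)$; both expressions therefore pass through the projection to $\mf{U}$ and transport verbatim to $\ms{B}_q(\g)$ with $\mb{e}_i'$ replaced by $e_i'$. I do not expect a genuine obstacle here: the substantive calculation is the identity \eqref{eq:Ti acts on bej'} itself, which has already been carried out in the proof of Proposition \ref{prop:braid}, so the present corollary is a bookkeeping consequence of that proposition combined with Lemma \ref{lem:realization of Bqg}.
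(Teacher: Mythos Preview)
Your proposal is correct and follows essentially the same approach as the paper: both construct ${\texttt T}_i$ as the composite $\ms{B}_q^+(\g)\cong \tilde{U}_q^+(\g)\xrightarrow{T_i}\tilde{U}_q(\g)\xrightarrow{\pi}\mf{U}\cong \ms{B}_q(\g)$, invoking Proposition~\ref{prop:braid} for well-definedness and reading off the explicit formulas from \eqref{eq:Ti acts on bej'}.
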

\begin{proof}
By Lemma \ref{lem:realization of Bqg} and Proposition \ref{prop:braid}, the map ${\texttt T}_i$ is the map on $\ms{B}_q^+(\g)$ induced from 
\begin{equation} \label{eq:composition Ti}
\begin{split}
\xymatrix{
	\ms{B}^+_q(\mf{g}) \ar@{->}[d]_{\cong} \ar@{->}[rr]^{{\texttt T}_i} & & \ms{B}_q(\mf{g}) \ar@{->}[d]^{\cong} \\
	\tilde{U}^+_q(\mf{g}) \ar@{->}[r]^{T_i} & \tilde{U}_q(\mf{g}) \ar@{->>}[r]^{\,\,\,\pi} & \mf{U}
}
\end{split}
\end{equation}
where the vertical map is the isomorphism of $\bk$-algebras (cf.~Proposition \ref{prop:properties of tUqg}(a)).
This implies our assertion.
\end{proof}

\begin{ex}{\rm
Suppose that $A$ is simply-laced, that is, $q_i=q$ and $a_{ij} \in \{2,0,-1\}$ for all $i,j \in I$. Then 
\begin{equation*}
\begin{split}
\texttt{T}_i(e'_i) &=(q-q^{-1})q^2f_i,\\
\texttt{T}_i(e'_j) &= 
\begin{cases}
 e'_j & \text{if $a_{ij}=0$},\\
 \dfrac{q^{-1}e'_ie'_j - q^{-2} e'_je'_i}{ -(q-q^{-1})} & \text{if $a_{ij}=-1$}.
\end{cases}
\end{split}
\end{equation*}
This is reminiscent of the one introduced by \cite{KKOP21} (up to suitable scalar multiplication).
}
\end{ex}

\begin{cor}\label{cor:braid rel}
 Let $w\in W$ be given with $(i_\ell, \dots, i_1) \in R(w)$. Let $u\in \ms{B}^+_q(\mf{g})$ be given such that $\text{\em \texttt T}_{i_k}\dots \text{\em \texttt T}_{i_1}(u)\in \ms{B}^+_q(\mf{g})$ for all $1\le k\le \ell$. Then $\text{\em \texttt T}_{i_\ell}\dots \text{\em \texttt T}_{i_1}(u)\in \ms{B}^+_q(\mf{g})$ is independent of the choice of a reduced expression of $w$, which we denote by $\text{\em \texttt T}_w(u)$.
\end{cor}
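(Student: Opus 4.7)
My plan is to deduce the corollary from Lusztig's braid relations for the $T_i$'s on $U_q(\g)$, via Matsumoto's exchange theorem and the commutative diagram \eqref{eq:composition Ti}. Matsumoto's theorem says any two reduced expressions for $w$ are connected by a chain of braid moves $s_a s_b s_a \cdots \leftrightarrow s_b s_a s_b \cdots$ (each with $m_{ab}$ factors), so, after peeling off common prefixes and suffixes, it suffices to prove the following braid-relation version: for distinct $a, b \in I$ with $m = m_{ab} < \infty$ and $v \in \ms{B}_q^+(\g)$ such that all intermediate compositions of $\texttt{T}_a\texttt{T}_b\texttt{T}_a\cdots(v)$ and $\texttt{T}_b\texttt{T}_a\texttt{T}_b\cdots(v)$ (up to $m$ factors) lie in $\ms{B}_q^+(\g)$, the two $m$-fold compositions coincide.

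The idea is to lift $v$ to $\tilde v \in \tilde{U}_q^+(\g)$ via Proposition \ref{prop:properties of tUqg}(a) and exploit the fact that Lusztig's $T_i$'s on $U_q(\g)$ satisfy the braid relations (\cite[Ch.~39]{Lu10}), so that
\begin{equation*}
(T_aT_bT_a\cdots)(\tilde v) \;=\; (T_bT_aT_b\cdots)(\tilde v) \qquad \text{in } U_q(\g),
\end{equation*}
each composition having $m$ factors. I then want to show that, under the given hypothesis, this common value already lies in $\tilde{U}_q(\g)$ and that its image under $\pi$ equals both $\texttt{T}_a\texttt{T}_b\cdots(v)$ and $\texttt{T}_b\texttt{T}_a\cdots(v)$ via the diagram \eqref{eq:composition Ti}. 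The required equality in $\ms{B}_q^+(\g)$ follows at once.

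To verify this compatibility, I would argue by induction on the number of factors. Writing $T_{a_{k-1}}\cdots T_{a_1}(\tilde v) = \tilde v_{k-1} + r_{k-1}$ with $\tilde v_{k-1} \in \tilde{U}_q^+(\g)$ the canonical lift of $v_{k-1} := \texttt{T}_{a_{k-1}}\cdots \texttt{T}_{a_1}(v)$ and $r_{k-1} \in J$, Proposition \ref{prop:braid} and the diagram \eqref{eq:composition Ti} yield $T_{a_k}(\tilde v_{k-1}) \in \tilde{U}_q(\g)$ and $\pi\bigl(T_{a_k}(\tilde v_{k-1})\bigr) = v_k := \texttt{T}_{a_k}(v_{k-1})$, hence $T_{a_k}(\tilde v_{k-1}) = \tilde v_k + s_k$ for some $s_k \in J$. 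The inductive step then reduces to showing that $T_{a_k}(r_{k-1}) + s_k$ lies in $J \cap \tilde{U}_q(\g)$.

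The main obstacle is that $T_i$ does not in general preserve $\tilde{U}_q(\g)$: for instance $T_i(k_i) = k_i^{-1}$. Hence the individual summand $T_{a_k}(r_{k-1})$ may a priori exit $\tilde{U}_q(\g)$, and one must show that the hypothesis $v_k \in \ms{B}_q^+(\g)$ forces the potentially problematic $k_{a_k}^{-1}$-contributions (coming from $k_{a_k}$-powers in $r_{k-1}$) to cancel. Since only two indices $a, b$ enter the analysis of a single braid move, the whole computation localizes to the rank-two sub-root-datum generated by $\alpha_a, \alpha_b$, and the explicit formulas \eqref{eq:Ti acts on bej'} together with the triangular decomposition from Proposition \ref{prop:properties of tUqg}(b) should render this cancellation tractable.
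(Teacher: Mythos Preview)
Your overall strategy (lift to $U_q(\g)$ via $\psi$, invoke Lusztig's braid relations, descend) is exactly the paper's, and your reduction to a single braid move via Matsumoto is fine. But the way you organize the descent creates the very obstacle you identify, and the paper avoids it by proving something stronger at the inductive step.

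You write $T_{a_{k-1}}\cdots T_{a_1}\psi(v) = \tilde v_{k-1} + r_{k-1}$ with $r_{k-1}\in J$, and then worry that $T_{a_k}(r_{k-1})$ may leave $\tilde U_q(\g)$ because $T_i(k_i)=k_i^{-1}$. This worry is well founded, and your proposed rank-two cancellation argument is not carried out; as it stands the proof is incomplete. The paper sidesteps the whole issue by showing that in fact $r_{k-1}=0$: under the running hypothesis one has $T_{i_k}\cdots T_{i_1}\psi(u)\in \tilde U_q^+(\g)$ on the nose, for every $k$. Once that is in hand, \eqref{eq:conjugation of T_i} iterates to give
\[
\texttt{T}_{i_\ell}\cdots\texttt{T}_{i_1}(u)=\psi^{-1}\bigl(T_{i_\ell}\cdots T_{i_1}\psi(u)\bigr),
\]
and the right-hand side is manifestly independent of the reduced expression by Lusztig's braid relations. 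No Matsumoto reduction, no error-tracking, and no need to apply $T_i$ to anything in $J$.

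The inductive step you are missing is therefore: \emph{if $v\in\ms{B}_q^+(\g)$ and $\texttt{T}_i(v)\in\ms{B}_q^+(\g)$, then $T_i\psi(v)\in\tilde U_q^+(\g)$} (not merely $\tilde U_q^+(\g)+J$). The point is that $T_i(\tilde U_q^+(\g))$ is generated by $T_i(\mb e_j')$, which by \eqref{eq:Ti acts on bej'} lie in $\tilde U_q^+(\g)$ for $j\neq i$ and equal a scalar multiple of $f_i$ for $j=i$; no $k$'s appear among the generators. Straightening via Proposition~\ref{prop:properties of tUqg}(c) then shows that an element of $T_i(\tilde U_q^+(\g))$ whose image under $\tilde\pi$ lies in $\ms{B}_q^+(\g)$ must already lie in $\tilde U_q^+(\g)$ (equivalently, $\tilde\pi$ is injective on $T_i(\tilde U_q^+(\g))$, so the element is recovered from its image, which sits in $\ms{B}_q^+(\g)$). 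Aim for this stronger statement and the difficulty with $T_i(k_i)=k_i^{-1}$ never arises.
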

\begin{proof}
Let $\xymatrix@C=2em{\!\!\psi: \ms{B}^+_q(\mf{g}) \ar@{->}[r] & \tilde{U}^+_q(\mf{g})}\!\!$ be the isomorphism of $\bk$-algebras obtained from Proposition \ref{prop:properties of tUqg}, and let $\xymatrix@C=2em{\!\!\tilde{\pi}:  \tilde{U}_q(\mf{g}) \ar@{->}[r] & \ms{B}_q(\mf{g}) }\!\!$ be the composition of the natural projection from $\tilde{U}_q(\mf{g})$ onto $\mf{U}$ (cf.~\eqref{eq:mfU}) with the $\bk$-algebra isomorphism between $\mf{U}$ and $\ms{B}_q({\mf{g}})$ in Lemma \ref{eq:mfU}.
If $T_i(\psi(u))\in \tilde{U}^+_q(\mf{g})$, then 
\begin{equation}\label{eq:conjugation of T_i}
 \texttt{T}_i(u)=\psi^{-1}T_i\psi(u)
\end{equation}
since $\tilde{\pi}\vert_{\tilde{U}^+_q(\mf{g})}=\psi^{-1}$.
By our assumption, we have
\begin{equation} \label{eq:assumption}
T_{i_k}  \dots   T_{i_1}  \psi(u)\in  \tilde{U}^+_q(\mf{g})
\end{equation}
for  all $1\le k\le \ell$.
Hence it follows from \eqref{eq:conjugation of T_i} and \eqref{eq:assumption} that 
\begin{equation} \label{eq:exp of ttTw}
{\texttt T}_{i_\ell}\dots {\texttt T}_{i_1}(u) 
 =\psi^{-1} T_{i_\ell}  \dots   T_{i_1}  \psi(u).
\end{equation}
The right-hand side in \eqref{eq:exp of ttTw} does not depend on the choice of a reduced expression of $w$ \cite{Lu10}.
\end{proof}

\begin{rem} \label{rem:domain of Ti}
{\em 
For $i \in I$, we define
$$
	{}^*U_q^-(\g)[i] = T_i^{-1} \left( U_q^-(\g) \right) \cap U_q^-(\g).
$$
It is known in \cite{Lu10} (cf.~\cite{Sa94}) that there exists an orthogonal decomposition of $U_q^-(\g)$ given by 
$$
	U_q^-(\g) = {}^*U_q^-(\g)[i] \oplus U_q^-(\g)f_i
$$%
with respect to the nondegenerate symmetric bilinear from $(\,\,,\,)_{\rm K}$ in \cite[Proposition 3.4.4]{Kas91}.
Let $\tilde{U}_q(\g)[i]$ (resp.~$\ms{B}_q(\g)[i]$) be the $\bk$-subalgebra of $\tilde{U}_q(\g)$ (resp.~$\ms{B}_q(\g)$) generated by $\tilde{U}_q^+(\g)$ (resp.~$\ms{B}_q^+(\g)$) and ${}^*U_q^-(\g)[i]$.
Then we have $T_i \left( \tilde{U}_q(\g)[i] \right) \subset \tilde{U}_q(\g)$, so one may extend the domain of ${\texttt T}_i$ to be $\ms{B}_q(\g)[i]$.
}
\end{rem}

\section{Extended affine Weyl groups} \label{sec:extended affine Weyl groups}

\subsection{Affine root systems} \label{subsec:Affine root systems}
We follow the convention in \cite{Kac}.
Assume that the generalized Cartan matrix $\sfA$ is of affine type $X_n^{(1)}$, where its Dynkin diagram is as shown in Table \ref{tab:dynkins}.
\begin{table}[H]
	\resizebox{\columnwidth}{!}{%
		\begin{tabular}{|c||c||c|c|}
			\hline
			type & Dynkin diagram & type & Dynkin diagram \\
			\hline \hline
			$A_1^{(1)}$ & 
			\begin{minipage}[c][0.06\textheight]{0.5\linewidth}
			\setlength{\unitlength}{0.17in}
			\begin{picture}(16,3.7)
					\put(7,2.1){\makebox(0,0)[c]{$\circ$}}
					\put(9,2.1){\makebox(0,0)[c]{$\circ$}}
					\put(7.3,2.0){\line(1,0){1.3}}
					\put(7.3,2.2){\line(1,0){1.3}}
					\put(7.0,1.89){\Large $\prec$}
					\put(8.2,1.89){\Large $\succ$}
					\put(7,1.3){\makebox(0,0)[c]{\tiny ${\alpha}_0$}}
					\put(9,1.3){\makebox(0,0)[c]{\tiny ${\alpha}_1$}}
				\end{picture}
			\end{minipage} & & \\ 
			\hline
			$\underset{(n \ge 2)}{A_n^{(1)}}$ & 
			\begin{minipage}[c][0.09\textheight]{0.5\linewidth}
			\setlength{\unitlength}{0.17in}
				\begin{picture}(16,5.5)
					\put(3,2.1){\makebox(0,0)[c]{$\circ$}}
					\put(5.6,2.1){\makebox(0,0)[c]{$\circ$}}
					\put(10.5,2.1){\makebox(0,0)[c]{$\circ$}}
					\put(13,2.1){\makebox(0,0)[c]{$\circ$}}
					\put(8.1,4.1){\makebox(0,0)[c]{$\circ$}}
					\put(3.7,2.1){\line(1,0){1.3}}
					\put(6,2.1){\line(1,0){1.3}}
					\put(8.7,2.1){\line(1,0){1.3}}
					\put(11.2,2.1){\line(1,0){1.3}}
					\put(7.7,3.9){\line(-3,-1){4.3}}
					\put(8.5,3.9){\line(3,-1){4}}

					\put(8,2.05){\makebox(0,0)[c]{$\cdots$}}
					\put(8,3.2){\makebox(0,0)[c]{\tiny $\alpha_0$}}
					\put(3.2,1.1){\makebox(0,0)[c]{\tiny ${\alpha}_1$}}
					\put(5.6,1.1){\makebox(0,0)[c]{\tiny ${\alpha}_2$}}
					\put(10.4,1.1){\makebox(0,0)[c]{\tiny ${\alpha}_{n-1}$}}
					\put(13,1.1){\makebox(0,0)[c]{\tiny ${\alpha}_{n}$}}
				\end{picture}
			\end{minipage} & & \\ 
			\hline
			$\underset{(n \ge 3)}{B_n^{(1)}}$ & 
			\begin{minipage}[c][0.10\textheight]{0.5\linewidth}
			\setlength{\unitlength}{0.17in}
				\begin{picture}(16,3.7)
					\put(2.8,0.7){\makebox(0,0)[c]{$\circ$}}
					\put(2.8,3.3){\makebox(0,0)[c]{$\circ$}}
					\put(5.6,2.1){\makebox(0,0)[c]{$\circ$}}
					\put(10.5,2.1){\makebox(0,0)[c]{$\circ$}}
					\put(13,2.1){\makebox(0,0)[c]{$\circ$}}

					\put(5.3,1.8){\line(-2,-1){2.1}}
					\put(5.3,2.2){\line(-2,1){2.1}}
					\put(6,2.1){\line(1,0){1.3}}
					\put(8.7,2.1){\line(1,0){1.3}}
					\put(11.2,2){\line(1,0){1.3}}
					\put(11.2,2.2){\line(1,0){1.3}}
					\put(11.5,1.88){\Large $\succ$}

					\put(8,2.05){\makebox(0,0)[c]{$\cdots$}}
					\put(2.8,2.5){\makebox(0,0)[c]{\tiny ${\alpha}_0$}}
					\put(2.8,0){\makebox(0,0)[c]{\tiny ${\alpha}_1$}}
					\put(5.6,1.1){\makebox(0,0)[c]{\tiny ${\alpha}_2$}}
					\put(10.4,1.1){\makebox(0,0)[c]{\tiny ${\alpha}_{n-1}$}}
					\put(13,1.1){\makebox(0,0)[c]{\tiny ${\alpha}_{n}$}}
					%
					\put(5.6,2.8){\makebox(0,0)[c]{\tiny $2$}}
					\put(10.4,2.8){\makebox(0,0)[c]{\tiny $2$}}
					\put(13,2.8){\makebox(0,0)[c]{\tiny $2$}}
				\end{picture}
			\end{minipage} & 
			$\underset{(n \ge 3)}{A_{2n-1}^{(2)}}$ & 
			\begin{minipage}[c][0.10\textheight]{0.5\linewidth}
			\setlength{\unitlength}{0.17in}
				\begin{picture}(16,3.7)
					\put(2.8,0.7){\makebox(0,0)[c]{$\circ$}}
					\put(2.8,3.3){\makebox(0,0)[c]{$\circ$}}
					\put(5.6,2.1){\makebox(0,0)[c]{$\circ$}}
					\put(10.5,2.1){\makebox(0,0)[c]{$\circ$}}
					\put(13,2.1){\makebox(0,0)[c]{$\circ$}}

					\put(5.3,1.8){\line(-2,-1){2.1}}
					\put(5.3,2.2){\line(-2,1){2.1}}
					\put(6,2.1){\line(1,0){1.3}}
					\put(8.7,2.1){\line(1,0){1.3}}
					\put(11.2,2){\line(1,0){1.3}}
					\put(11.2,2.2){\line(1,0){1.3}}
					\put(11.5,1.88){\Large $\prec$}

					\put(8,2.05){\makebox(0,0)[c]{$\cdots$}}
					\put(2.8,2.5){\makebox(0,0)[c]{\tiny ${\alpha}_0$}}
					\put(2.8,0){\makebox(0,0)[c]{\tiny ${\alpha}_1$}}
					\put(5.6,1.1){\makebox(0,0)[c]{\tiny ${\alpha}_2$}}
					\put(10.4,1.1){\makebox(0,0)[c]{\tiny ${\alpha}_{n-1}$}}
					\put(13,1.1){\makebox(0,0)[c]{\tiny ${\alpha}_{n}$}}
					%
					\put(5.6,2.8){\makebox(0,0)[c]{\tiny $2$}}
					\put(10.4,2.8){\makebox(0,0)[c]{\tiny $2$}}
				\end{picture}
			\end{minipage}
			\\ 
			\hline
			$\underset{(n \ge 2)}{C_n^{(1)}}$ & 
			\begin{minipage}[c][0.06\textheight]{0.5\linewidth}
			\setlength{\unitlength}{0.17in}
			\begin{picture}(16,3.7)
					\put(3,2.1){\makebox(0,0)[c]{$\circ$}}
					\put(5.6,2.1){\makebox(0,0)[c]{$\circ$}}
					\put(10.5,2.1){\makebox(0,0)[c]{$\circ$}}
					\put(13,2.1){\makebox(0,0)[c]{$\circ$}}
					\put(3.7,2){\line(1,0){1.3}}
					\put(3.7,2.2){\line(1,0){1.3}}
					\put(6,2.1){\line(1,0){1.3}}
					\put(8.7,2.1){\line(1,0){1.3}}
					\put(11.2,2){\line(1,0){1.3}}
					\put(11.2,2.2){\line(1,0){1.3}}
					\put(11.5,1.88){\Large $\prec$}
					\put(4,1.88){\Large $\succ$}

					\put(8,2.05){\makebox(0,0)[c]{$\cdots$}}
					\put(3.2,1.1){\makebox(0,0)[c]{\tiny ${\alpha}_0$}}
					\put(5.6,1.1){\makebox(0,0)[c]{\tiny ${\alpha}_1$}}
					\put(10.4,1.1){\makebox(0,0)[c]{\tiny ${\alpha}_{n-1}$}}
					\put(13,1.1){\makebox(0,0)[c]{\tiny ${\alpha}_{n}$}}
					%
					\put(5.6,2.8){\makebox(0,0)[c]{\tiny $2$}}
					\put(10.4,2.8){\makebox(0,0)[c]{\tiny $2$}}
				\end{picture}
			\end{minipage} & $\underset{(n \ge 2)}{D_{n+1}^{(2)}}$ &
			\begin{minipage}[c][0.06\textheight]{0.5\linewidth}
			\setlength{\unitlength}{0.17in}
			\begin{picture}(16,3.7)
					\put(3,2.1){\makebox(0,0)[c]{$\circ$}}
					\put(5.6,2.1){\makebox(0,0)[c]{$\circ$}}
					\put(10.5,2.1){\makebox(0,0)[c]{$\circ$}}
					\put(13,2.1){\makebox(0,0)[c]{$\circ$}}
					\put(3.7,2){\line(1,0){1.3}}
					\put(3.7,2.2){\line(1,0){1.3}}
					\put(6,2.1){\line(1,0){1.3}}
					\put(8.7,2.1){\line(1,0){1.3}}
					\put(11.2,2){\line(1,0){1.3}}
					\put(11.2,2.2){\line(1,0){1.3}}
					\put(11.5,1.88){\Large $\succ$}
					\put(4,1.88){\Large $\prec$}

					\put(8,2.05){\makebox(0,0)[c]{$\cdots$}}
					\put(3.2,1.1){\makebox(0,0)[c]{\tiny ${\alpha}_0$}}
					\put(5.6,1.1){\makebox(0,0)[c]{\tiny ${\alpha}_1$}}
					\put(10.4,1.1){\makebox(0,0)[c]{\tiny ${\alpha}_{n-1}$}}
					\put(13,1.1){\makebox(0,0)[c]{\tiny ${\alpha}_{n}$}}
					%
				\end{picture}
			\end{minipage} \\ 
			\hline
			$\underset{(n \ge 4)}{D_n^{(1)}}$ &
			\begin{minipage}[c][0.11\textheight]{0.5\linewidth}
				\setlength{\unitlength}{0.17in}
				\begin{picture}(16,3.7)
					\put(2.8,0.7){\makebox(0,0)[c]{$\circ$}}
					\put(2.8,3.3){\makebox(0,0)[c]{$\circ$}}
					\put(5.6,2){\makebox(0,0)[c]{$\circ$}}
					\put(10.4,2){\makebox(0,0)[c]{$\circ$}}
					\put(13.1,3.3){\makebox(0,0)[c]{$\circ$}}
					\put(13.1,0.7){\makebox(0,0)[c]{$\circ$}}
					\put(5.3,1.8){\line(-2,-1){2.1}}
					\put(5.3,2.2){\line(-2,1){2.1}}
					\put(6,2){\line(1,0){1.3}}
					\put(8.7,2){\line(1,0){1.3}}
					\put(10.7,2.2){\line(2,1){2}}
					\put(10.7,1.8){\line(2,-1){2}}

					\put(8,1.95){\makebox(0,0)[c]{$\cdots$}}
					\put(2.8,2.5){\makebox(0,0)[c]{\tiny ${\alpha}_0$}}
					\put(2.8,0.0){\makebox(0,0)[c]{\tiny ${\alpha}_1$}}
					\put(5.6,1){\makebox(0,0)[c]{\tiny ${\alpha}_2$}}
					\put(10.4,1){\makebox(0,0)[c]{\tiny ${\alpha}_{n-2}$}}
					\put(13.1,2.5){\makebox(0,0)[c]{\tiny ${\alpha}_{n-1}$}}
					\put(13.1,0.0){\makebox(0,0)[c]{\tiny ${\alpha}_{n}$}}
					%
					\put(5.6,2.7){\makebox(0,0)[c]{\tiny $2$}}
					\put(10.4,2.7){\makebox(0,0)[c]{\tiny $2$}}
				\end{picture}
			\end{minipage} & &  \\ 
			\hline
			$E_6^{(1)}$ & 
			\begin{minipage}[c][0.15\textheight]{0.5\linewidth}
			\setlength{\unitlength}{0.17in}
				\begin{picture}(16,8.5)
					\put(3,2.1){\makebox(0,0)[c]{$\circ$}}
					\put(5.6,2.1){\makebox(0,0)[c]{$\circ$}}
					\put(8,2.1){\makebox(0,0)[c]{$\circ$}}
					\put(10.5,2.1){\makebox(0,0)[c]{$\circ$}}
					\put(13, 2.1){\makebox(0,0)[c]{$\circ$}}
					\put(8,4.5){\makebox(0,0)[c]{$\circ$}}
					\put(8,7){\makebox(0,0)[c]{$\circ$}}

					\put(8, 4){\line(0, -3){1.3}}
					\put(8, 6.5){\line(0, -3){1.3}}
					\put(3.7,2.1){\line(1,0){1.3}}
					\put(6,2.1){\line(1,0){1.3}}
					\put(8.7,2.1){\line(1,0){1.3}}
					\put(11.2,2.1){\line(1,0){1.3}}
					\put(8.7,7){\makebox(0,0)[c]{\tiny $\alpha_0$}}
					\put(3.2,1.1){\makebox(0,0)[c]{\tiny ${\alpha}_1$}}
					\put(8.7,4.5){\makebox(0,0)[c]{\tiny $\alpha_2$}}
					\put(5.6,1.1){\makebox(0,0)[c]{\tiny ${\alpha}_3$}}
					\put(8,1.1){\makebox(0,0)[c]{\tiny $\alpha_4$}}
					\put(10.4,1.1){\makebox(0,0)[c]{\tiny ${\alpha}_{5}$}}
					\put(13,1.1){\makebox(0,0)[c]{\tiny ${\alpha}_{6}$}}
					%
					\put(7.4,4.5){\makebox(0,0)[c]{\tiny $2$}}
					\put(5.6,2.7){\makebox(0,0)[c]{\tiny $2$}}
					\put(8.5,2.7){\makebox(0,0)[c]{\tiny $3$}}
					\put(10.4,2.7){\makebox(0,0)[c]{\tiny $2$}}
				\end{picture}
			\end{minipage} & & \\ 
			\hline
			$E_7^{(1)}$ & 
			\begin{minipage}[c][0.10\textheight]{0.5\linewidth}
			\setlength{\unitlength}{0.17in}
				\begin{picture}(16,5.5)
					\put(0.7,2.1){\makebox(0,0)[c]{$\circ$}}
					\put(3,2.1){\makebox(0,0)[c]{$\circ$}}
					\put(5.6,2.1){\makebox(0,0)[c]{$\circ$}}
					\put(8,2.1){\makebox(0,0)[c]{$\circ$}}
					\put(10.5,2.1){\makebox(0,0)[c]{$\circ$}}
					\put(13, 2.1){\makebox(0,0)[c]{$\circ$}}
					\put(15.5,2.1){\makebox(0,0)[c]{$\circ$}}
					\put(8,4.5){\makebox(0,0)[c]{$\circ$}}
					\put(8, 4){\line(0, -3){1.3}}
					\put(1.2,2.1){\line(1,0){1.3}}
					\put(3.7,2.1){\line(1,0){1.3}}
					\put(6,2.1){\line(1,0){1.3}}
					\put(8.7,2.1){\line(1,0){1.3}}
					\put(11.2,2.1){\line(1,0){1.3}}
					\put(13.7,2.1){\line(1,0){1.3}}
					\put(0.8,1.1){\makebox(0,0)[c]{\tiny ${\alpha}_0$}}
					\put(3.2,1.1){\makebox(0,0)[c]{\tiny ${\alpha}_1$}}
					\put(8.7,4.5){\makebox(0,0)[c]{\tiny $\alpha_2$}}
					\put(5.6,1.1){\makebox(0,0)[c]{\tiny ${\alpha}_3$}}
					\put(8,1.1){\makebox(0,0)[c]{\tiny $\alpha_4$}}
					\put(10.4,1.1){\makebox(0,0)[c]{\tiny ${\alpha}_{5}$}}
					\put(13,1.1){\makebox(0,0)[c]{\tiny ${\alpha}_{6}$}}
					\put(15.5, 1.1){\makebox(0,0)[c]{\tiny ${\alpha}_{7}$}}
					%
					\put(3.2,2.7){\makebox(0,0)[c]{\tiny $2$}}
					\put(7.5,4.5){\makebox(0,0)[c]{\tiny $2$}}
					\put(5.6,2.7){\makebox(0,0)[c]{\tiny $3$}}
					\put(8.5,2.7){\makebox(0,0)[c]{\tiny $4$}}
					\put(10.4,2.7){\makebox(0,0)[c]{\tiny $3$}}
					\put(13,2.7){\makebox(0,0)[c]{\tiny $2$}}
				\end{picture}
			\end{minipage} & & \\ 
			\hline
			$E_8^{(1)}$ & 
			\begin{minipage}[c][0.10\textheight]{0.5\linewidth}
			\setlength{\unitlength}{0.15in}
				\begin{picture}(16,5.5)
					\put(0.7,2.1){\makebox(0,0)[c]{$\circ$}}
					\put(3,2.1){\makebox(0,0)[c]{$\circ$}}
					\put(5.6,2.1){\makebox(0,0)[c]{$\circ$}}
					\put(8,2.1){\makebox(0,0)[c]{$\circ$}}
					\put(10.5,2.1){\makebox(0,0)[c]{$\circ$}}
					\put(13, 2.1){\makebox(0,0)[c]{$\circ$}}
					\put(15.5,2.1){\makebox(0,0)[c]{$\circ$}}
					\put(18,2.1){\makebox(0,0)[c]{$\circ$}}
					\put(5.6,4.5){\makebox(0,0)[c]{$\circ$}}
					\put(5.6,4){\line(0, -3){1.3}}
					\put(1.2,2.1){\line(1,0){1.3}}
					\put(3.7,2.1){\line(1,0){1.3}}
					\put(6,2.1){\line(1,0){1.3}}
					\put(8.7,2.1){\line(1,0){1.3}}
					\put(11.2,2.1){\line(1,0){1.3}}
					\put(13.7,2.1){\line(1,0){1.3}}
					\put(16.2,2.1){\line(1,0){1.3}}
					\put(0.8,1.1){\makebox(0,0)[c]{\tiny ${\alpha}_1$}}
					\put(3.2,1.1){\makebox(0,0)[c]{\tiny ${\alpha}_3$}}
					\put(6.5,4.5){\makebox(0,0)[c]{\tiny $\alpha_2$}}
					\put(5.6,1.1){\makebox(0,0)[c]{\tiny ${\alpha}_4$}}
					\put(8,1.1){\makebox(0,0)[c]{\tiny $\alpha_5$}}
					\put(10.4,1.1){\makebox(0,0)[c]{\tiny ${\alpha}_6$}}
					\put(13,1.1){\makebox(0,0)[c]{\tiny ${\alpha}_7$}}
					\put(15.5, 1.1){\makebox(0,0)[c]{\tiny ${\alpha}_8$}}
					\put(18, 1.1){\makebox(0,0)[c]{\tiny ${\alpha}_0$}}
					%
					\put(0.8,2.7){\makebox(0,0)[c]{\tiny $2$}}
					\put(3.2,2.7){\makebox(0,0)[c]{\tiny $4$}}
					\put(5,4.5){\makebox(0,0)[c]{\tiny $3$}}
					\put(6.2,2.7){\makebox(0,0)[c]{\tiny $6$}}
					\put(8,2.7){\makebox(0,0)[c]{\tiny $5$}}
					\put(10.4,2.7){\makebox(0,0)[c]{\tiny $4$}}
					\put(13,2.7){\makebox(0,0)[c]{\tiny $3$}}
					\put(15.5,2.7){\makebox(0,0)[c]{\tiny $2$}}
				\end{picture}
			\end{minipage} 
			& & \\ 
			\hline
			$F_4^{(1)}$ & 
			\begin{minipage}[c][0.06\textheight]{0.5\linewidth}
			\setlength{\unitlength}{0.17in}
			\begin{picture}(16,3.9)
					\put(4,2.1){\makebox(0,0)[c]{$\circ$}}
					\put(6,2.1){\makebox(0,0)[c]{$\circ$}}
					\put(8,2.1){\makebox(0,0)[c]{$\circ$}}
					\put(10,2.1){\makebox(0,0)[c]{$\circ$}}
					\put(12,2.1){\makebox(0,0)[c]{$\circ$}}
					\put(4.3,2.1){\line(1,0){1.3}}
					\put(6.3,2.1){\line(1,0){1.3}}
					\put(8.3,2.0){\line(1,0){1.3}}
					\put(8.3,2.2){\line(1,0){1.3}}
					\put(10.3,2.1){\line(1,0){1.3}}
					\put(8.6,1.88){\Large $\succ$}
					\put(4,1.3){\makebox(0,0)[c]{\tiny ${\alpha}_0$}}
					\put(6,1.3){\makebox(0,0)[c]{\tiny ${\alpha}_1$}}
					\put(8,1.3){\makebox(0,0)[c]{\tiny ${\alpha}_2$}}
					\put(10,1.3){\makebox(0,0)[c]{\tiny ${\alpha}_3$}}
					\put(12,1.3){\makebox(0,0)[c]{\tiny ${\alpha}_4$}}
					%
					\put(6,2.7){\makebox(0,0)[c]{\tiny $2$}}
					\put(8,2.7){\makebox(0,0)[c]{\tiny $3$}}
					\put(10,2.7){\makebox(0,0)[c]{\tiny $4$}}
					\put(12,2.7){\makebox(0,0)[c]{\tiny $2$}}
				\end{picture}
			\end{minipage} & $E_6^{(2)}$ &
			\begin{minipage}[c][0.06\textheight]{0.5\linewidth}
			\setlength{\unitlength}{0.17in}
			\begin{picture}(16,3.9)
					\put(4,2.1){\makebox(0,0)[c]{$\circ$}}
					\put(6,2.1){\makebox(0,0)[c]{$\circ$}}
					\put(8,2.1){\makebox(0,0)[c]{$\circ$}}
					\put(10,2.1){\makebox(0,0)[c]{$\circ$}}
					\put(12,2.1){\makebox(0,0)[c]{$\circ$}}
					\put(4.3,2.1){\line(1,0){1.3}}
					\put(6.3,2.1){\line(1,0){1.3}}
					\put(8.3,2.0){\line(1,0){1.3}}
					\put(8.3,2.2){\line(1,0){1.3}}
					\put(10.3,2.1){\line(1,0){1.3}}
					\put(8.6,1.88){\Large $\prec$}
					\put(4,1.3){\makebox(0,0)[c]{\tiny ${\alpha}_0$}}
					\put(6,1.3){\makebox(0,0)[c]{\tiny ${\alpha}_1$}}
					\put(8,1.3){\makebox(0,0)[c]{\tiny ${\alpha}_2$}}
					\put(10,1.3){\makebox(0,0)[c]{\tiny ${\alpha}_3$}}
					\put(12,1.3){\makebox(0,0)[c]{\tiny ${\alpha}_4$}}
					%
					\put(6,2.7){\makebox(0,0)[c]{\tiny $2$}}
					\put(8,2.7){\makebox(0,0)[c]{\tiny $3$}}
					\put(10,2.7){\makebox(0,0)[c]{\tiny $2$}}
					\put(12,2.7){\makebox(0,0)[c]{\tiny $1$}}
				\end{picture}
			\end{minipage}
			 \\ 
			\hline
			$G_2^{(1)}$ & 
			\begin{minipage}[c][0.06\textheight]{0.5\linewidth}
			\setlength{\unitlength}{0.17in}
			\begin{picture}(16,3.7)
					\put(6,2.1){\makebox(0,0)[c]{$\circ$}}
					\put(8,2.1){\makebox(0,0)[c]{$\circ$}}
					\put(10,2.1){\makebox(0,0)[c]{$\circ$}}
					\put(6.3,2.1){\line(1,0){1.3}}
					\put(8.3,1.9){\line(1,0){1.3}}
					\put(8.3,2.1){\line(1,0){1.3}}
					\put(8.3,2.3){\line(1,0){1.3}}
					\put(8.5,1.84){\LARGE $\succ$}
					\put(6,1.3){\makebox(0,0)[c]{\tiny ${\alpha}_0$}}
					\put(8,1.3){\makebox(0,0)[c]{\tiny ${\alpha}_1$}}
					\put(10,1.3){\makebox(0,0)[c]{\tiny ${\alpha}_2$}}
					%
					\put(8,2.7){\makebox(0,0)[c]{\tiny $2$}}
					\put(10,2.7){\makebox(0,0)[c]{\tiny $3$}}
				\end{picture}
			\end{minipage} & $D_4^{(3)}$ & 
			\begin{minipage}[c][0.06\textheight]{0.5\linewidth}
			\setlength{\unitlength}{0.17in}
			\begin{picture}(16,3.7)
					\put(6,2.1){\makebox(0,0)[c]{$\circ$}}
					\put(8,2.1){\makebox(0,0)[c]{$\circ$}}
					\put(10,2.1){\makebox(0,0)[c]{$\circ$}}
					\put(6.3,2.1){\line(1,0){1.3}}
					\put(8.3,1.9){\line(1,0){1.3}}
					\put(8.3,2.1){\line(1,0){1.3}}
					\put(8.3,2.3){\line(1,0){1.3}}
					\put(8.5,1.84){\LARGE $\prec$}
					\put(6,1.3){\makebox(0,0)[c]{\tiny ${\alpha}_0$}}
					\put(8,1.3){\makebox(0,0)[c]{\tiny ${\alpha}_1$}}
					\put(10,1.3){\makebox(0,0)[c]{\tiny ${\alpha}_2$}}
					%
					\put(8,2.7){\makebox(0,0)[c]{\tiny $2$}}
				\end{picture}
			\end{minipage}
			\\ 
			\hline
		\end{tabular}
	}
	\caption{}
	\label{tab:dynkins}
\end{table}
Let us take the index set by $I = \left\{\,0,\,1,\,\dots,\,n\,\right\}$ so that $\mathring{\sfA} = \left(a_{ij}\right)_{i,j \in I \setminus \{ 0 \}}$ is of finite type. Put $\I = I \setminus \{ 0 \}$ for simplicity.

Let
\begin{equation*}
	\sfQ=\bigoplus_{i \in I}\,\Z\alpha_i \text{\,\, and \,\,}
	\sfQ^\vee=\bigoplus_{i \in I}\,\Z h_i,
\end{equation*}
which are the root and coroot lattices for $\g$, respectively,
and set $\sfQ_+=\bigoplus_{i \in I}\Z_+\alpha_i$ and $\sfQ_- = -\sfQ_+$.

Let $\delta \in \h^*$ be an imaginary null root and $K \in \h$ a central element given by
\begin{equation} \label{eq:del and K}
	\delta = \sum_{i \in I} \,a_i \alpha_i \text{\,\, and \,\,}
	K = r_\g \sum_{i \in I} \,a_i^\vee h_i,
\end{equation}
where $a_i$ (resp. $a_i^\vee$) is the (resp.~dual) Kac label for $\sfA$ (resp.~${}^t\sfA$), and $r_\g$ is given by $$
	r_\g = 
	\begin{cases}
		1 & \text{if $X = A, D, E$,} \\
		2 & \text{if $X = B, C, F$,} \\
		3 & \text{if $X = G$.}
	\end{cases}
$$
The numerical labels in Table \ref{tab:dynkins} are the Kac labels for $\sfA$ and $^t\sfA$, where we omit those equal to $1$ (cf.~Remark \ref{rem:compared with Kac}).

Set $\theta = \delta-a_0\alpha_0$ and $\theta^\vee = a_0^{-1} (r_\g^{-1} K - h_0)$.
We denote by $d$ the scaling element.
Let $\{\, \Lambda_i \in \h^* \,\mid\, i \in I \,\}$ (resp.~$\{\, \Lambda_i^\vee \in \h \,\mid\, i \in I \,\}$) be the set of the fundamental weights (resp.~coweights) of $\g$, where $\langle \Lambda_i, h_j \rangle  = \delta_{ij}$, $\langle \alpha_i, \Lambda_j^\vee \rangle = \delta_{ij}$ $(i, j \in I)$, and $d = \Lambda_0^\vee$.
Put
\begin{equation*}
	\sfP = \bigoplus_{i \in I} \Z \Lambda_i + \mathbb{C} \delta \text{\,\, and \,\,}
	\sfP^\vee = \bigoplus_{i \in I} \Z \Lambda_i^\vee + \mathbb{C} K.
\end{equation*}

Let us take a nondegenerate symmetric bilinear $\C$-valued from $(\,\cdot\,,\,\cdot\,)$ on $\h$ following \cite[(6.2.1)]{Kac} with respect to $\sfD$ (see Remark \ref{rem:compared with Kac}), and then $\nu : \h \rightarrow \hd$ is the $\C$-linear isomorphism from $\h$ to $\hd$ induced from $(\,\cdot\,,\,\cdot\,)$, that is, $\langle \nu(h), h' \rangle = (h,h')$ for $h, h' \in \h$. We still denote by $(\,\cdot\,,\,\cdot\,)$ the $\C$-valued bilinear form on $\h^*$ induced from $\nu : \h \rightarrow \h^*$.

Let $\cg$ denote the underlying finite-dimensional simple Lie subalgebra of $\g$ corresponding to $\mr{\sfA}$, and let $\ch$ be its Cartan subalgebra, which is defined by the $\C$-linear span of $\alpha_1, \dots, \alpha_n$.
Let $\mr{\sfQ}=\bigoplus_{i \in \I}\Z\alpha_i$ be the root lattice of $\cg$. Let $\{\,\varpi_i\,|\,i \in \I\,\}$ be the set of the fundamental weights, and let $\mr{\sfP}=\bigoplus_{i \in \I}\Z\varpi_i$ be the weight lattice of $\mr{\g}$.
Note that $\theta$ is equal to the maximal root of $\cg$ by regarding $\mr{\sfP}$ as a sublattice of $\sfP / \Z\delta$.
Similarly, we denote by $\mr{\sfQ}^\vee = \bigoplus_{i \in \I}\Z h_i$ (resp.~$\mr{\sfP}^\vee = \bigoplus_{i \in \I}\Z\varpi_i^\vee$) the coroot (resp.~coweight) lattice of $\cg$.
Let $\mr{\sfDel}^+$ (resp.~$\mr{\sfDel}^-$) be the set of positive (resp.~negative) roots of $\cg$.

Let $\ov{\lambda} \in \ch^*$ denote the orthogonal projection of $\lambda \in \h^*$ onto $\ch^* \subset \h^*$ with respect to $(\,\cdot\,,\,\cdot\,)$, where we also use the same notations for $\h$ and $\ch$.
For $i \in \I$, we have
\begin{equation*}
	\Lambda_i = \ov{\Lambda}_i + a_i^\vee \Lambda_0, \qquad
	\Lambda_i^\vee = \ov{\Lambda_i^\vee} + a_i d 
\end{equation*}
where $\ov{\Lambda_0} = \ov{\Lambda_0^\vee} = 0$. 
Note that $\ov{\Lambda}_i$ and $\ov{\Lambda_i^\vee}$ coincide with $\varpi_i$ and $\varpi_i^\vee$ as elements of $\mr{\h}^*$ and $\mr{\h}$, respectively.

\subsection{Extended affine Weyl groups}
Let $W$ (resp.~$\cW$) be the Weyl group of $\g$ (resp.~$\cg$), where $\cW$ is naturally regarded as a subgroup of $W$.

For $\beta \in \hd$, the endomorphism $t_\beta$ of $\hd$ is defined by
\begin{equation} \label{eq:translation}
	t_\beta (\lambda)
	=
	\lambda + \langle \lambda,\, K \rangle \,\beta - \left( (\lambda, \beta) + \frac{1}{2}(\beta,\beta) \langle \lambda, K \rangle \right) \delta,
\end{equation}
which is called the {\it translation} by $\beta$ \cite[(6.5.2)]{Kac}.
The family of translations satisfies the additive property and it acts on $\sfDel = \sfDel^+ \cup \sfDel^-$.

Let
\begin{equation} \label{eq:Z-lattices}
	\sfM = \nu\left( \Z(\cW \cdot \theta^\vee) \right)
	\, \text{ and } \,\,\,
	\eM = \left\{ \left. \, \beta \in \chd \, \right| \, (\beta,\alpha_i) \in \mathbb{Z} \,\, \text{\,for} \,\, i \in \I \, \right\}
\end{equation}
be the $\Z$-lattices of $\mr{\h}^*$.
The next result is well-known, but we provide its proof for self-containedness.

\begin{prop} \label{prop:Z-lattices for T}
We have
\begin{equation*}
	\sfM = \nu\left(\mr{\sfQ}^\vee\right) \supseteq \mr{\sfQ} \text{\, and \,}
	\eM = \nu\left(\mr{\sfP}^\vee\right) \supseteq \mr{\sfP}.
\end{equation*}
In particular, $\nu(\varpi_i^\vee) = d_i^{-1} \varpi_i$ for $i \in \I$, which forms the $\Z$-basis of $\eM$.
\end{prop}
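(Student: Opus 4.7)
Plan. My strategy is to compute $\nu$ explicitly on the finite coweight/coroot lattice inside $\ch$, express $\nu(\mr{\sfP}^\vee)$ and $\nu(\mr{\sfQ}^\vee)$ as concrete $\Z$-lattices in $\chd$, and then match these with $\eM$ and $\sfM$.

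The first and central step is to derive the two formulas $\nu(h_i) = d_i^{-1}\alpha_i$ and $\nu(\varpi_i^\vee) = d_i^{-1}\varpi_i$ (viewed as identities in $\chd$) for $i\in\I$. The calculation is the same in both cases: by the defining property of $\nu$, together with the paper's normalization of the form (so that $(\alpha_i,\alpha_j) = d_i a_{ij}$, equivalently $(\alpha_j,\alpha_j) = 2d_j$ with $\sfD\sfA$ symmetric), the pairings $(\nu(h_i), \alpha_j) = \langle \alpha_j, h_i\rangle = a_{ij}$ and $(\nu(\varpi_i^\vee), \alpha_j) = \langle \alpha_j, \varpi_i^\vee\rangle = \delta_{ij}$ uniquely determine elements of $\chd$ by nondegeneracy of the pairing against $\{\alpha_j\}_{j\in\I}$. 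A direct check shows that $d_i^{-1}\alpha_i$ and $d_i^{-1}\varpi_i$ reproduce these pairings; for the weight version one needs the intermediate identity $(\varpi_i, \alpha_j) = d_j \delta_{ij}$, which itself follows from $\langle \varpi_i, h_j\rangle = \delta_{ij}$ combined with $(\alpha_j,\alpha_j) = 2d_j$.

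These formulas immediately yield $\nu(\mr{\sfP}^\vee) = \bigoplus_{i\in\I}\Z\,d_i^{-1}\varpi_i$ and $\nu(\mr{\sfQ}^\vee) = \bigoplus_{i\in\I}\Z\,d_i^{-1}\alpha_i$. The inclusions $\mr{\sfP}\subseteq \nu(\mr{\sfP}^\vee)$ and $\mr{\sfQ}\subseteq \nu(\mr{\sfQ}^\vee)$ are then trivial, since $\varpi_i = d_i(d_i^{-1}\varpi_i)$, $\alpha_i = d_i(d_i^{-1}\alpha_i)$, and $d_i\in\Z$. The coincidence $\eM = \nu(\mr{\sfP}^\vee)$ falls out transparently: in the basis $\{d_i^{-1}\varpi_i\}_{i\in\I}$ of $\chd$, the coordinate of $\beta\in\chd$ indexed by $i$ is exactly $(\beta, \alpha_i)$, so integrality of all these pairings is the same as integrality of all coordinates. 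This also shows that $\{\nu(\varpi_i^\vee)\}_{i\in\I}$ is a $\Z$-basis of $\eM$, as asserted.

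The remaining and most delicate step is $\sfM = \nu(\mr{\sfQ}^\vee)$. Writing $\theta^\vee = K - h_0 = \sum_{i\in\I}a_i^\vee h_i$ shows $\theta^\vee\in \mr{\sfQ}^\vee$, and since $\mr{\sfQ}^\vee$ is $\cW$-stable, the inclusion $\sfM\subseteq \nu(\mr{\sfQ}^\vee)$ is immediate. The reverse inclusion reduces to the purely combinatorial identity $\Z\cdot\cW\cdot\theta^\vee = \mr{\sfQ}^\vee$. In simply-laced types this is trivial, as $\cW\cdot\theta^\vee$ is the entire set of coroots of $\cg$. For the non-simply-laced untwisted types $B_n, C_n, F_4, G_2$, the element $\theta^\vee$ is the short coroot dual to the long highest root, the orbit $\cW\cdot\theta^\vee$ equals the set of all short coroots, and the claim reduces to the classical fact that short coroots $\Z$-span the full coroot lattice. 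I would verify this case-by-case by exhibiting each long simple coroot as a $\Z$-combination of short coroots (for instance, in $B_n$, using $h_n = (e_{n-1}^{*} + e_n^{*}) - h_{n-1}$, with analogous short expressions for $C_n, F_4, G_2$). This case-by-case verification is the main obstacle; each individual case is elementary, but it does require an explicit computation inside the standard realization of the root system.
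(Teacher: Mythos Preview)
Your proof is correct and follows essentially the same approach as the paper: both arguments compute $\nu(h_i) = d_i^{-1}\alpha_i$ and $\nu(\varpi_i^\vee) = d_i^{-1}\varpi_i$ by pairing against the simple (co)roots and then read off the lattice identifications directly. The only difference is that for $\sfM = \nu(\mr{\sfQ}^\vee)$ the paper simply cites \cite[\S 6.5]{Kac}, while you sketch the content of that citation (the identity $\Z\cdot\cW\cdot\theta^\vee = \mr{\sfQ}^\vee$, checked type-by-type in the non-simply-laced cases); your sketch is correct and is essentially what one finds in Kac.
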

\begin{proof}
First, the proof of $\sfM = \nu\left(\mr{\sfQ}^\vee\right)$ can be found in \cite[\textsection 6.5]{Kac}.
Note that $\nu(h_i) = d_i^{-1} \alpha_i$ with $d_i^{-1} \in \Q$ \cite[(6.5.2)]{Kac}, so we have $\sfM \supseteq \mr{\sfQ}$.

Second, let us verify $\eM = \nu\left(\mr{\sfP}^\vee\right)$.
Take $\beta \in \eM$. For $i \in \I$, we have
\begin{equation*}
	\left\langle \alpha_i,\, \nu^{-1}(\beta) \right\rangle 
	= (h_i, \nu^{-1}(\beta))d_i
	= (\alpha_i, \beta) \in \Z.
\end{equation*}
Thus $\nu^{-1}(\beta) \in \mr{\sfP}^\vee$, equivalently, $\beta \in \nu\left(\mr{\sfP}^\vee\right)$.
Conversely, let $\beta \in \nu\left(\mr{\sfP}^\vee\right)$. 
For $j \in \I$, we have
\begin{equation*}
	\left( \alpha_j,\, \beta \right) 
	= \left( d_j h_j,\, \nu^{-1}(\beta) \right) 
	= \langle \alpha_j,\, \nu^{-1}(\beta) \rangle \in \Z,
\end{equation*}
so $\beta \in \eM$. 

Finally, we have $d_i\nu(\varpi_i^\vee) = \varpi_i$, since
$
	\langle \nu(\varpi_i^\vee), h_j \rangle 
	= (\varpi_i^\vee, h_j) 
	= \langle \alpha_j, \varpi_i^\vee \rangle d_j^{-1} = \delta_{ji}d_i^{-1}.
$
Hence $\eM \supseteq \mr{\sfP}$. We complete the proof.
\end{proof}
Let $T_{\sfM}$ $\left(\text{resp. $T_{\eM}$}\right)$ be the group of translations consisting of $t_\beta$ \eqref{eq:translation} for $\beta \in \sfM$ (resp.~$\beta \in \eM$).
Note that $\eM$ is defined so that $T_{\eM}$ acts on $\sfDel$, and 
$t_{w(\beta)} = w t_\beta w^{-1}$ for $\beta \in \eM$ and  $w \in \cW$
(cf.~\cite[(6.5.7)]{Kac}).

\begin{rem} \label{rem:compared with Kac}
{\em 
In this paper, we take $\sfD = {\rm diag}(d_i)_{i\in I}$ with $d_i = r_\g a_i^{-1}a_i^\vee$ for $i \in I$, while $d_i = a_i^{-1}a_i^\vee$ in \cite[\text section 6]{Kac}. 
But the formula \eqref{eq:translation} coincides essentially with \cite[(6.5.2)]{Kac}. 
More precisely, let $(\,\cdot\,|\,\cdot\,)$ be the normalized invariant bilinear form in \cite{Kac}, and let $\nu' : \h \rightarrow \h^*$ be the $\mathbb{C}$-linear isomorphism induced from $(\,\cdot\,|\,\cdot\,)$. Set $\eM' = \nu'(\mr{\sfP}^\vee)$. 
For $\beta \in \eM'$, we denote by $t_\beta'$ the translation by $\beta$ as in \eqref{eq:translation} by replacing $(\,\cdot\,,\,\cdot\,)$ and $K$ with $(\,\cdot\,|\,\cdot\,)$ and $K' := r_\g^{-1} K$, respectively (i.e.~as in \cite[(6.5.2)]{Kac}).
	Since $(\,\cdot\,|\,\cdot\,) = r_\g (\,\cdot\,,\,\cdot\,)$ on $\h$ (so, $\nu = r_\g^{-1} \nu'$) and $(\lambda, \nu(\varpi_i^\vee)) = \frac{2(\lambda,\varpi_i)}{(\varpi_i, \varpi_i)} = (\lambda | \nu'(\varpi_i^\vee))$ for $i \in \I$ and $\lambda \in \h^*$, we have $t_{\nu(\varpi_i^\vee)} = t_{\nu'(\varpi_i^\vee)}'$.
	This implies $T_{\eM} = T_{\eM'}$ by Proposition \ref{prop:Z-lattices for T}.
}
\end{rem}

We define
\begin{equation*}
	\eW = \mathring{W} \ltimes T_{\eM},
\end{equation*}
which is called the {\it extended affine Weyl group} of $\g$.
Since it is known that $W$ is isomorphic to the semidirect product $\cW \ltimes T_{\sfM}$ \cite[Proposition 6.5]{Kac},
we have $W \subset \eW$.

We denote by $\mr{\h}_{\mathbb{R}}^*$ the $\mathbb{R}$-linear span of $\alpha_1, \dots, \alpha_n$.
Let
\begin{equation*}
	C_{\rm af} = \left\{ \left. \, \lambda \in \mr{\h}_{\mathbb{R}}^* \, \right| \, (\lambda, \alpha_i) \ge 0 \text{ for $i \in \I$, and } (\lambda, \theta) \le 1 \,  \right\}
\end{equation*}
which is called the {\it affine Weyl chamber} (also called {\it fundamental alcove}).
Let $\mc{T}$ be the subgroup of $\eW$, which stabilizes $C_{\rm af}$.

\begin{prop}[\!\!\!\cite{Bour02}] \label{prop:Weyl}
The subgroup $\mc{T}$ is isomorphic to $\eW /W \simeq T_{\eM} / T_{\sfM}$, and $\eW$ is isomorphic to $W \rtimes \mc{T}$, where each element of $\mc{T}$ is understood as an affine Dynkin diagram automorphism.
\end{prop}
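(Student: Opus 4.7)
The plan is to exploit the simply transitive action of the affine Weyl group $W$ on the set of alcoves in the real span $\mr{\h}_{\mathbb{R}}^*$. Recall that $C_{\rm af}$ is the fundamental alcove cut out by the walls $(\lambda,\alpha_i)=0$ ($i\in\I$) and $(\lambda,\theta)=1$, and that $W$ is generated by the reflections in these walls. The first step is to observe that $\eW$ permutes the full affine hyperplane arrangement $\{(\lambda,\alpha)=k : \alpha\in\mr{\sfDel}^+,\,k\in\Z\}$: the finite part $\cW$ permutes $\mr{\sfDel}$, while $T_{\eM}$ preserves the arrangement by the very definition of $\eM$ in \eqref{eq:Z-lattices}, since translation by $\beta$ sends $(\lambda,\alpha)=k$ to $(\lambda,\alpha)=k+(\beta,\alpha)$ and $(\beta,\alpha)\in\Z$. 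Consequently $\eW$ acts on the set of alcoves.

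Given $\sigma\in\eW$, the image $\sigma(C_{\rm af})$ is an alcove, so by simple transitivity of $W$ there is a unique $w\in W$ with $w\sigma(C_{\rm af})=C_{\rm af}$. Then $w\sigma\in\mc{T}$, which proves $\eW=W\cdot\mc{T}$, and the same simple transitivity forces $W\cap\mc{T}=\{1\}$. To upgrade this to a semidirect product, I would show $W\trianglelefteq\eW$: any $\tau\in\mc{T}$ stabilizes $C_{\rm af}$ and hence permutes its walls, so conjugation by $\tau$ carries each reflection in a wall of $C_{\rm af}$ to the reflection in another wall of $C_{\rm af}$; since these reflections generate $W$, we obtain $\tau W\tau^{-1}=W$ and thus $\eW=W\rtimes\mc{T}$.

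For the isomorphism $\mc{T}\cong\eW/W\cong T_{\eM}/T_{\sfM}$, I would combine the decompositions $W=\cW\ltimes T_{\sfM}$ (from \cite[Proposition 6.5]{Kac}, cited in the text) and $\eW=\cW\ltimes T_{\eM}$ to get $\eW/W\cong T_{\eM}/T_{\sfM}$ directly, while $\mc{T}\cong\eW/W$ is immediate from the semidirect product established above. Finally, to realize $\tau\in\mc{T}$ as an affine Dynkin diagram automorphism, I would use that $\tau$ permutes the walls of $C_{\rm af}$, equivalently the simple affine roots $\{\alpha_0,\alpha_1,\ldots,\alpha_n\}$, and that $\tau$ is an isometry (as a composition of an element of $\cW$ and a translation), so the induced permutation preserves the inner products $(\alpha_i,\alpha_j)$ and hence the entries of $\sfA$.

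The main obstacle is really the first step, namely verifying that $T_{\eM}$ is exactly the translation part of the full stabilizer of the hyperplane arrangement; the characterization of $\eM$ in \eqref{eq:Z-lattices} together with Proposition \ref{prop:Z-lattices for T} is tailored for precisely this, so once that is in hand the remainder is formal and reduces to the simple transitivity of $W$ on alcoves and the bijection between walls of $C_{\rm af}$ and nodes of the affine Dynkin diagram.
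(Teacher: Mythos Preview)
The paper does not supply its own proof of this proposition; it simply cites \cite{Bour02} and moves on. Your argument via the simply transitive $W$-action on alcoves, the factorization $\sigma=w^{-1}\tau$ with $\tau\in\mc{T}$, normality of $W$ from conjugation permuting the walls of $C_{\rm af}$, and the identification $\eW/W\cong T_{\eM}/T_{\sfM}$ from the two semidirect product descriptions is exactly the standard Bourbaki approach, so it is fully consistent with the citation and correct as stated.
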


By Proposition \ref{prop:Weyl}, for $\hat{w} \in \eW$,
\begin{equation*}
	\hat{w} = w \tau,
\end{equation*}
where $w \in W$ and $\tau \in \mc{T}$. The length of $\hat{w} \in \eW$ is defined by the length $\ell(w)$ of $w$, which is also denoted by $\ell(\hat{w})$.
Note that $\mc{T} = \{ \, \hat{w} \in \eW \, | \, \ell(\hat{w}) = 0 \, \}$.
For $\hat{w} \in \eW$, we define
$$\sfDel^+(\hat{w})=\sfDel^+\cap \left(\hat{w}\sfDel^-\right) \subset \sfDel^+.$$
Then it is well-known that $\ell(\hat{w})  = |\sfDel^+(\hat{w})|$.

\subsection{Translations $t_r$} \label{subsec:translations}
For $r \in \I$, let us collect some known properties of $t_{-\nu\left(\overline{\Lambda_r^\vee}\right)}$ following \cite{Bour02, Wakimoto,Kac,Hump} (cf.~\cite{Ko,Che}).
We write $\lar = \nu\left( \overline{\Lambda_r^\vee} \right)$ and $\tr = t_{\lar}$ for simplicity.
By Proposition \ref{prop:Weyl}, we have
\begin{equation} \label{eq:translation by fundamentals}
\tr^{-1} = w_r \tau_r
\end{equation}
for some $w_r\in W$ and $\tau_r\in\mc{T}$.
For simplicity, we often omit the subscript $r$ in $\tau_r$ throughout this paper if there is no confusion (cf.~Example \ref{ex:rex of tr in rank 2}).

The following can be found in several literatures, but we also provide its proof for self-containdness.

\begin{prop} \label{prop:Delta+ by tr}
We have
\begin{equation*}
	\sfDel^+\left(w_r\right) = \sfDel^+\left(\tr^{-1}\right) = 
	\left\{ \alpha + k \delta \, \left| \,\, \alpha \in \mr{\sfDel}^+,\, 0 \le k < (\lambda_r,\,\alpha)  \right. \right\}.
\end{equation*}
\end{prop}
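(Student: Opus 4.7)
The plan is to read off $\sfDel^+(\tr^{-1})$ directly from the translation formula \eqref{eq:translation}. By additivity of translations, $\tr^{-1} = t_{-\lar}$, so $\beta \in \sfDel^+(\tr^{-1}) = \sfDel^+ \cap t_{-\lar}(\sfDel^-)$ is equivalent to $\beta \in \sfDel^+$ together with $t_{\lar}(\beta) \in \sfDel^-$. The problem therefore reduces to understanding how $t_{\lar}$ moves the positive roots.

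First I would specialize \eqref{eq:translation} to affine roots. Every real root of $\g$ has the form $\alpha + k\delta$ with $\alpha \in \mr{\sfDel}$ and $k \in \Z$, and the positive imaginary roots are $k\delta$ with $k > 0$. Since $\langle \alpha + k\delta, K\rangle = 0$ and $(\lar, \delta) = 0$ (because $\lar \in \chd$), formula \eqref{eq:translation} collapses to
\[
t_{\lar}(\alpha + k\delta) = \alpha + \bigl(k - (\alpha, \lar)\bigr)\delta, \qquad t_{\lar}(k\delta) = k\delta.
\]

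I would then conclude by a three-way case split on $\beta$. If $\alpha \in \mr{\sfDel}^+$, then $\beta = \alpha + k\delta \in \sfDel^+$ forces $k \geq 0$, and $t_{\lar}(\beta) \in \sfDel^-$ is equivalent to $k < (\alpha, \lar)$, giving exactly the asserted set. If $\alpha \in \mr{\sfDel}^-$, then $\beta \in \sfDel^+$ requires $k > 0$; but by Proposition \ref{prop:Z-lattices for T} we have $\lar = d_r^{-1} \varpi_r$, which is $\mr{\sfDel}^+$-dominant, so $(\alpha, \lar) \leq 0$ and the condition $t_{\lar}(\beta) \in \sfDel^-$ is incompatible with $k > 0$. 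Finally, imaginary roots are pointwise fixed by $t_{\lar}$, so they contribute nothing. There is no real obstacle here; the argument is elementary once the action of $t_{\lar}$ on real roots is in hand, the only modest care being the verification of $\mr{\sfDel}^+$-dominance of $\lar$. As a sanity check, summing $(\alpha, \lar)$ over $\alpha \in \mr{\sfDel}^+$ should recover $\ell(\tr^{-1})$, which is the standard length formula for translations.
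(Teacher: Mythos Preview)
Your proposal is correct and follows essentially the same approach as the paper: both arguments reduce to the simplified translation formula $t_{\lar}(\alpha + k\delta) = \alpha + (k-(\alpha,\lar))\delta$ and then do a case split on the sign of $\alpha$, using dominance of $\lar$ to rule out $\alpha \in \mr{\sfDel}^-$. The only cosmetic difference is that the paper parametrizes by $\gamma \in \sfDel^-$ and computes $\beta = \tr^{-1}(\gamma)$, whereas you parametrize by $\beta \in \sfDel^+$ and check when $t_{\lar}(\beta) \in \sfDel^-$; your treatment is slightly more explicit about the dominance of $\lar$ and the (trivial) imaginary case.
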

\begin{proof}
	Since $\tau\left(\sfDel^{\pm}\right) = \sfDel^{\pm}$ for $\tau \in \mc{T}$, we have $\sfDel^+\left(\tr^{-1}\right) = \sfDel^+(w_r)$.
	Let $\beta \in \sfDel^+\left(\tr^{-1}\right)$ be given.
	Then there exists $\gamma \in \sfDel^-$ such that $\beta = \tr^{-1}\left(\gamma\right) \in \sfDel^+$.
	It is well-known (e.g.~\cite[Proposition 6.3]{Kac}) that $\gamma$ is in $\mr{\sfDel}^-$ or given as $\alpha + \texttt{k}\delta$ for some $\alpha \in \mr{\sfDel}$ and $\texttt{k} < 0$.
	Since $\langle \gamma, K \rangle = 0$ and $\tr^{-1} = t_{-\lar}$, we have by \eqref{eq:translation}
	\begin{equation*}
		\beta = \tr^{-1}(\gamma) = \gamma + \left( \gamma, \lar \right)\delta \in \sfDel^+
	\end{equation*}
	If $\gamma \in \mr{\sfDel}^-$, then $\beta \in \sfDel^-$, which contradicts $\beta \in \sfDel^+$. So we assume that $\gamma = \alpha + \texttt{k}\delta$ for some $\alpha \in \mr{\sfDel}$ and $\texttt{k} < 0$. Then
	\begin{equation*}
		\beta = (\alpha+\texttt{k}\delta) + \left( \alpha+\texttt{k}\delta, \lar \right)\delta = \alpha + \left(\texttt{k} + (\alpha, \lar)\right) \delta.
	\end{equation*}
	Put $k = \texttt{k} + (\alpha, \lar)$.
	Since $\beta \in \sfDel^+$ and $\texttt{k} < 0$, we have $\alpha \in \mr{\sfDel}^+$ and $0 \le k < (\alpha, \lar)$.
	Conversely, let $\beta = \alpha + k\delta$ for $\alpha \in \mr{\sfDel}^+$ and $0 \le k < (\lar, \alpha)$. Then we have
	\begin{equation*}
		\tr(\beta) = \tr(\alpha) + k\delta = \alpha + \left(k-(\alpha,\lar)\right)\delta \in \sfDel^-,
	\end{equation*}
	which implies $\beta \in \sfDel^+ \cap \tr^{-1}(\sfDel^-) = \sfDel^+(\tr^{-1})$.
	This completes the proof.
\end{proof}

The following formula of $\ell(\tr^{-1})$ is obtained directly from Proposition \ref{prop:Delta+ by tr}, which is well-known in \cite[6.10 in \S 6.8]{Kac} (see also \cite{Pap95}).

\begin{cor} \label{cor:length of tr-1}
	The length of $\tr^{-1}$ is given by
	\begin{equation*}
		\ell(\tr^{-1}) = \sum_{\alpha \in \mr{\sfDel}^+} (\lar, \alpha).
	\end{equation*}
\end{cor}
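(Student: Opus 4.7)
The plan is to derive this as an immediate consequence of Proposition \ref{prop:Delta+ by tr}, combined with the standard formula $\ell(w) = |\sfDel^+(w)|$ which is already recalled in the excerpt. So the statement essentially amounts to counting the set
\[
\sfDel^+(\tr^{-1}) = \left\{ \alpha + k\delta \,\Big|\, \alpha \in \mr{\sfDel}^+,\ 0 \le k < (\lar, \alpha) \right\}
\]
by fibering it over $\mr{\sfDel}^+$.

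First I would check that the indexing is well-posed, i.e., that $(\lar, \alpha)$ is a non-negative integer for every $\alpha \in \mr{\sfDel}^+$. Non-negativity follows because $\lar = \nu(\overline{\Lambda_r^\vee}) = d_r^{-1}\varpi_r$ by Proposition \ref{prop:Z-lattices for T}, and $\varpi_r$ pairs non-negatively with positive roots of $\cg$. Integrality follows from the definition of $\eM$ in \eqref{eq:Z-lattices}, since $\lar \in \eM$ means $(\lar, \alpha_i) \in \Z$ for all $i \in \I$, hence $(\lar, \alpha) \in \Z$ for every $\alpha \in \mr{\sfQ}$ by linearity.

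Given this, for each fixed $\alpha \in \mr{\sfDel}^+$ the set of admissible $k$'s is exactly $\{0, 1, \dots, (\lar, \alpha) - 1\}$, contributing $(\lar, \alpha)$ elements to $\sfDel^+(\tr^{-1})$. Since the map $(\alpha, k) \mapsto \alpha + k\delta$ is injective (as $\delta$ is independent of $\mr{\h}^*$), summing gives
\[
\ell(\tr^{-1}) = |\sfDel^+(\tr^{-1})| = \sum_{\alpha \in \mr{\sfDel}^+} (\lar, \alpha),
\]
which is the desired formula.

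There is no real obstacle here; the corollary is essentially a bookkeeping consequence of Proposition \ref{prop:Delta+ by tr}. The only subtlety worth stating explicitly is the verification that $(\lar, \alpha) \in \Z_{\ge 0}$, which justifies the counting and ensures the sum on the right-hand side is a non-negative integer matching the length.
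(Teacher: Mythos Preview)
Your proof is correct and follows the same approach as the paper, which simply states that the formula follows directly from Proposition \ref{prop:Delta+ by tr}. You have added the explicit verification that $(\lar,\alpha)\in\Z_{\ge 0}$ and that the parametrization $(\alpha,k)\mapsto \alpha+k\delta$ is injective, both of which are implicit in the paper's one-line proof.
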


\begin{prop} \label{prop:length of tr}
	For $i \in I$, we have
	\begin{equation*}
		\ell(s_i\tr^{-1}) =
		\begin{cases}
			\ell(\tr^{-1}) + 1 & \text{if $i \neq r$,} \\
			\ell(\tr^{-1}) - 1 & \text{if $i = r$,}
		\end{cases}
		\qquad
		\ell(\tr^{-1}s_i) =
		\begin{cases}
			\ell(\tr^{-1}) + 1 & \text{if $i \neq 0$,} \\
			\ell(\tr^{-1}) - 1 & \text{if $i = 0$.}
		\end{cases}
	\end{equation*}
\end{prop}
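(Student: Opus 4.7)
The plan is to apply the standard length criterion: for $\hat{w} \in \eW$ and $i \in I$, $\ell(s_i\hat{w}) = \ell(\hat{w}) + 1$ if and only if $\hat{w}^{-1}(\alpha_i) \in \sfDel^+$, and $\ell(\hat{w}s_i) = \ell(\hat{w}) + 1$ if and only if $\hat{w}(\alpha_i) \in \sfDel^+$. This extends from $W$ to $\eW = W \rtimes \mc{T}$ by writing $\hat{w} = w\tau$ and observing that $s_i \hat{w} = (s_i w)\tau$ has $\ell(s_i\hat{w}) = \ell(s_i w)$, together with the fact that the diagram automorphism $\tau$ preserves $\sfDel^{\pm}$, so $\hat{w}^{-1}(\alpha_i) = \tau^{-1}w^{-1}(\alpha_i)$ is positive iff $w^{-1}(\alpha_i)$ is. The analogous statement on the right uses $\hat{w}s_i = w s_{\tau(i)} \tau$.

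With that reduction, the calculation is reduced to computing $\tr^{\pm 1}(\alpha_i)$ for each $i \in I$. Since $\langle \alpha_i, K\rangle = 0$ for all $i \in I$ (from the null-vector / central-element property), the translation formula \eqref{eq:translation} simplifies to
\begin{equation*}
t_\beta(\alpha_i) = \alpha_i - (\alpha_i, \beta)\,\delta.
\end{equation*}
By Proposition \ref{prop:Z-lattices for T} we have $\lar = d_r^{-1}\varpi_r$, and invoking $(\alpha_i, \nu(h)) = \langle \alpha_i, h\rangle$ yields $(\alpha_i, \lar) = \delta_{ir}$ for $i \in \I$. For $i = 0$, we use $\alpha_0 = \delta - \theta$ together with $\theta = \sum_{j \in \I} a_j \alpha_j$ to obtain $(\alpha_0, \lar) = -(\theta, \lar) = -a_r$.

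Assembling these: for $i \in \I$, $\tr(\alpha_i) = \alpha_i - \delta_{ir}\delta$, which lies in $\sfDel^+$ iff $i \neq r$, while $\tr(\alpha_0) = (1+a_r)\delta - \theta \in \sfDel^+$ since $a_r \geq 1$. Similarly $\tr^{-1}(\alpha_i) = \alpha_i + \delta_{ir}\delta \in \sfDel^+$ for all $i \in \I$, while $\tr^{-1}(\alpha_0) = (1-a_r)\delta - \theta$, which equals $-\theta$ when $a_r=1$ and $-((a_r-1)\delta + \theta)$ when $a_r \geq 2$; in both cases it lies in $\sfDel^-$. Applying the length criterion case-by-case yields the two assertions.

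The computation is essentially routine once the setup is in place; the main delicate points are (i) verifying that the usual length criterion transfers from $W$ to $\eW$ through the semidirect decomposition in Proposition \ref{prop:Weyl}, and (ii) correctly translating $(\alpha_i, \lar)$ via $\nu$ and Proposition \ref{prop:Z-lattices for T}. These two bookkeeping steps will carry most of the weight of the proof, as the case analysis afterwards is just sign tracking.
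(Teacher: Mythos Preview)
Your proof is correct and follows essentially the same approach as the paper's own argument. Both proofs reduce to computing $t_r^{\pm 1}(\alpha_i)$ via the translation formula and checking the sign; you phrase this through the standard length criterion $\ell(s_i\hat{w}) = \ell(\hat{w}) + 1 \Leftrightarrow \hat{w}^{-1}(\alpha_i) \in \sfDel^+$, while the paper tracks the change in the inversion set $\sfDel^+(\,\cdot\,)$ explicitly (cf.\ Remark~\ref{rem:tr acts on al}, which records exactly your values of $t_r^{-1}(\alpha_i)$). Your handling of the passage from $W$ to $\eW$ via Proposition~\ref{prop:Weyl} is a useful clarification that the paper leaves implicit.
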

\begin{proof}
	For $i \in I$, it is well-known that 
	\begin{equation} \label{eq:reflect by ai}
	s_i \sfDel^\pm = \left( \sfDel^\pm \setminus \{ \pm\alpha_i \} \right) \cup \{ \mp\alpha_i \}.
	\end{equation}
	
	Let us first consider $\ell(s_i\tr^{-1})$.
	If $i \neq 0, r$, then $\tr^{-1}(-\alpha_i) = -\alpha_i$ by \eqref{eq:translation} and, by \eqref{eq:reflect by ai},
	$\sfDel^+(s_i\tr^{-1}) = \sfDel^+(\tr^{-1}) \cup \{ \alpha_i \}$. 
	If $i = 0$, then $\tr^{-1}(\alpha_0+a_r\delta) = \alpha_0$, so $\alpha_0 \notin \sfDel^+(\tr^{-1})$, while $\alpha_0 \in \sfDel^+(s_0\tr^{-1})$.
	As a result, $\ell(s_i\tr^{-1}) = \ell(\tr^{-1})+1$ for $i \neq r$.
	Suppose $i = r$. Since $\tr^{-1}(\alpha_r) = \alpha_r + \delta$, we have $\tr^{-1}(\alpha_r - \delta) = \alpha_r$, so $\alpha_r \in \sfDel^+(\tr^{-1})$. Thus, $\sfDel^+(s_r\tr^{-1}) = \sfDel^+(\tr^{-1}) \setminus \{ \alpha_r \}$ by \eqref{eq:reflect by ai}. This shows $\ell(s_r\tr^{-1}) = \ell(\tr^{-1}) - 1$. 

	Second, we consider $\ell(\tr^{-1}s_i)$.
	If $i = r$, then $\tr^{-1}s_r(-\alpha_r) = \alpha_r+\delta$, so $\ell(\tr^{-1}s_r) = \ell(\tr^{-1})+1$ by \eqref{eq:reflect by ai}. If $i \neq 0, r$, then $\tr^{-1}s_i(-\alpha_i) = \alpha_i$ implies $\ell(\tr^{-1}s_i) = \ell(\tr^{-1})+1$.
	Suppose $i = 0$. Then
	$\tr^{-1}(-\alpha_0) = (a_r-1)\delta + \theta \in \sfDel^+,$
	so $(a_r-1)\delta + \theta \in \sfDel^+(\tr^{-1})$.
	Note that $a_r - 1 \ge 0$ by its definition.
	By \eqref{eq:reflect by ai}, we have 
	\begin{equation*}
		\sfDel^+(\tr^{-1}s_0) = \sfDel^+(\tr^{-1}) \setminus \{ (a_r-1)\delta + \theta \},
	\end{equation*}
	which implies $\ell(\tr^{-1}s_0) = \ell(\tr^{-1})-1$.
\end{proof}

\begin{cor} \label{cor:rex of wr ending stau0}
The element $w_r \in W$ in \eqref{eq:translation by fundamentals} has a  reduced expression $s_{i_1}\dots s_{i_\ell}$ such that $s_{i_1} = s_r$ and $s_{i_\ell} = s_{\tau(0)}$. Furthermore, if $a_r = 1$, then the reduced expression $s_{i_1}\dots s_{i_\ell}$ of $w_r$ is unique up to $2$-braid relations, where a $2$-braid relation means $s_i s_j = s_j s_i$ for $i, j \in I$ such that $a_{ij} = 0$.
\end{cor}

\begin{rem} \label{rem:tr acts on al}
{\em 
In the proof of Proposition \ref{prop:length of tr}, we have
\begin{equation*}
	\tr^{-1}(\alpha_i) = 
	\begin{cases}
		\alpha_i & \text{if $i \in \I \setminus \{ r \}$,} \\
		\alpha_r + \delta & \text{if $i = r$,} \\
		\alpha_0 - a_r\delta & \text{if $i = 0$.}
	\end{cases}
\end{equation*}
In particular, $\alpha_r = \tr^{-1}\left(\alpha_r-\delta\right)$ and $\theta = \tr^{-1}\left(\theta-a_r\delta\right)$, which implies that $\alpha_r,\, \theta \in \sfDel^+(\tr^{-1})$.
}
\end{rem}

\begin{ex} \label{ex:rex of tr in rank 2}
{\em 
Let us illustrate Proposition \ref{prop:Delta+ by tr}, Corollary \ref{cor:length of tr-1}, and Proposition \ref{prop:length of tr} for types $A_2^{(1)}$, $C_2^{(1)}$, and $G_2^{(1)}$.
By \eqref{eq:translation} and Proposition \ref{prop:Weyl},
one may obtain a reduced expression of $\tr^{-1}$ as follows:
\smallskip

\begin{enumerate}[(1)]
	\item Type $A_2^{(1)}$.
	We have
	\begin{equation*}
	\begin{split}
		\tr^{-1} &= 
		\begin{cases}
			s_1 s_2 \tau_1 & \text{if $r = 1$,} \\
			s_2 s_1 \tau_2 & \text{if $r = 2$,}
		\end{cases} \\
		\sfDel^+(\tr^{-1}) &= 
		\begin{cases}
			\left\{ \, \alpha_1,\, \alpha_1 + \alpha_2 \, \right\} & \text{if $r = 1$,} \\
			\left\{ \, \alpha_2,\, \alpha_1 + \alpha_2 \, \right\} & \text{if $r = 2$.}
		\end{cases}
	\end{split}
	\end{equation*}
	Here $\tau_r$ is given by $i \mapsto i-r \mod{3}$ for $r = 1, 2$.
	\smallskip

	\item Type $C_2^{(1)}$.
	We have
	\begin{equation*}
	\begin{split}
		\tr^{-1} &=
		\begin{cases}
			s_1 s_2 s_1 s_0 & \text{if $r = 1$,} \\
			s_2 s_1 s_2 \tau & \text{if $r = 2$,}
		\end{cases} \\ 
		\sfDel^+\left( \tr^{-1} \right) &= 
		\begin{cases}
			\left\{ \alpha_1, 2\alpha_1 + \alpha_2, \alpha_1 + \alpha_2, \delta + 2\alpha_1 + \alpha_2 \right\} & \text{if $r = 1$,} \\
			\left\{ \alpha_2,\, \alpha_1 + \alpha_2,\, 2\alpha_1 + \alpha_2 \right\} & \text{if $r = 2$.}
		\end{cases}
	\end{split}
	\end{equation*}
	Here $\tau$ is given by $\tau(0) = 2$ and $\tau(1) = 1$. 
	\smallskip

	\item Type $G_2^{(1)}$.
	We have
	\begin{equation*}
	\begin{split}
		\tr^{-1} &=
		\begin{cases}
			s_1 s_2 s_1 s_2 s_1 s_0 & \text{if $r = 1$,} \\
			s_2 s_1 s_2 s_1 s_2 s_0 s_1 s_2 s_1 s_0 & \text{if $r = 2$,}
		\end{cases} \\
		\sfDel^+\left( \tr^{-1} \right) &= 
		\begin{cases}
			\left\{\, \alpha_1,\, \alpha_1+\alpha_2,\, 2\alpha_1+3\alpha_2,\, \alpha_1+2\alpha_2,\, \alpha_1+3\alpha_2,\, \delta+2\alpha_1+3\alpha_2 \,\right\} & \text{if $r = 1$,} \\
			\begin{split} 
				& \left\{\, \alpha_2,\, \alpha_1+3\alpha_2,\,\alpha_1+2\alpha_2,\,2\alpha_1+3\alpha_2,\,\alpha_1+\alpha_2 \,\right\} \\
				& \quad\quad \cup \left\{\, \delta+\alpha_1+3\alpha_2,\, \delta+2\alpha_1+3\alpha_2,\, \delta+\alpha_1+2\alpha_2 \,\right\} \\
				& \qquad\qquad \cup \left\{\, 2\delta+\alpha_1+3\alpha_2,\, 2\delta+2\alpha_1+3\alpha_2 \,\right\}
			\end{split}
			& \text{if $r = 2$.}
		\end{cases}
	\end{split}
	\end{equation*}
\end{enumerate}
}
\end{ex}

\begin{rem} \label{eq:rex of wr}
{\em 
	Let us assume that $a_r = 1$ (see Table \ref{tab:dynkins}). We denote by $\tilde{w}_r$ a reduced expression of $w_r$ \eqref{eq:translation by fundamentals}. Then we obtain an explicit description of $\tilde{w}_r$ given by
	{\allowdisplaybreaks
	\begin{gather*}
		\quad\,
		\begin{cases}
			{\bf s}_{(r,\,n)} {\bf s}_{( r-1,\,n-1)} \dots {\bf s}_{( 1,	\,n-r+1)}  & \text{for type $A_n$ with $r \in \I$,} \\
			(1, 2, \dots, n-1, n, n-1, \dots, 2, 1) & \text{for type 	$B_n$ with $r  = 1$,} \\
			\mathbf{s}_n \cdot \mathbf{s}_{n-1} \cdot\, \dots \,\cdot 	\mathbf{s}_1 & \text{for type $C_n$ with $r = n$,} \\
			{\bf s}_{(1,\,n)} \cdot {\bf s}_{(1,\,n-2)}^{-1} & \text{for 	type $D_n$ with $r = 1$,} \\
			\tilde{\bf s}_1 \tilde{\bf s}_2 \dots \tilde{\bf s}_{n-1} & 	\text{for type $D_n$ with $r = n$,} \\
			(1, 3, 4, 5, 6, 2, 4, 5, 3, 4, 2, 1, 3, 4, 5, 6) & \text{for 	type $E_6$ with $r = 1$,} \\
			(6, 5, 4, 3, 1, 2, 4, 3, 5, 4, 2, 6, 5, 4, 3, 1) & \text{for 	type $E_6$ with $r = 6$,} \\
			(7, 6, 5, 4, 3, 1, 2, 4, 3, 5, 4, 2, 6, 5, 4, 3, 1, 7, 6, 5, 4, 3, 2, 4, 5, 6, 7) & \text{for type $E_7$ with $r = 7$,} \\
		\end{cases}
	\end{gather*}\!\!}
	where $\mb{s}_{(i,\,j)} = (i, i+1, \dots, j)$,\, $\mb{s}_k = (n, 	n-1, \dots, n-k+1)$,\, $\tilde{\mb{s}}_k$ is given by
	\begin{equation*}
		\tilde{\bf s}_k =
		\begin{cases}
			s_n {\bf s}_{(k,\, n-2)}^{-1} & \text{if $k$ is odd,} \\
			{\bf s}_{(k,\, n-1)}^{-1} & \text{if $k$ is even,} \\
			s_n & \text{if $n$ is even and $k = n-1$,}
		\end{cases}
	\end{equation*}
	and the dot $\cdot$ means the concatenation of two sequences.
	Here $\mb{s}_{(i,j)}^{-1}$ is understood as the sequence obtained by 	reversing $\mb{s}_{(i,j)}$.
	For type $D_n^{(1)}$, a description of $\tilde{w}_{n-1}$ is obtained from $\tilde{w}_n$ by replacing $n$ with $n-1$.
}
\end{rem}

\section{Unipotent quantum coordinate ring as a module in $\mc{O}$} \label{sec:pseudo-prefundamentals}
\subsection{Unipotent quantum coordinate rings}
Let $\g$ be as in Section \ref{subsec:cartan data and quantum group}.
Let $w \in W$ be given.
For $\wtd{w} = (i_1,\dots,i_\ell)\in R(w)$, we have $\sfDel^+(w)=\{\,\beta_k\,|\,1\le k\le \ell\,\}$ (e.g.~see \cite{Pap94, Pap95}), where 
\begin{equation}\label{eq:root wrt rex}
\beta_k=s_{i_1}\dots s_{i_{k-1}}(\alpha_k)\quad (1\le k\le \ell).
\end{equation}
For $1\le k\le \ell$ and $c\in \Z_+$, let 
\begin{equation} \label{eq:root vector}
F(c\beta_k)=T_{i_1}\dots T_{i_{k-1}}\left(f_{i_k}^{(c)}\right)\in U_q^-(\mf{g})_{-c\beta_k}
\end{equation}
(see \cite{Lu10}).
In particular, when $c = 1$, we call it the {\it root vector} of $\beta_k$.
For ${\bf c}=(c_1,\dots,c_\ell)\in \Z_+^\ell$, put
\begin{equation} \label{eq:PBW monomial}
F\left({\bf c},\wtd{w}\right)=F(c_1\beta_1) \cdots F(c_\ell\beta_\ell),
\end{equation}
which is called a {\it PBW monomial} associated with $\sum_{k=1}^\ell c_k \beta_k$.

Assume that $\sfDel^+(w)$ is linearly ordered by $\beta_1<\dots<\beta_\ell$.
The following $q$-commutation relations for \eqref{eq:PBW monomial} are known \cite{LS91} (cf.~\cite{Ki12}):
\begin{equation}\label{eq:LS formula}
\begin{split}
F\left(c_j\beta_j\right)F\left(c_i\beta_i\right)-q^{-(c_i\beta_i,c_j\beta_j)}F\left(c_i\beta_i\right)F\left(c_j\beta_j\right)
=\sum_{{\bf c}'}f_{{\bf c}'}F\left({\bf c}',\wtd{w}\right)
\end{split}
\end{equation}
for $i<j$ and $c_i,c_j \in \Z_+$, where the sum is over ${\bf c}'=(c'_k)$ such that $c_i\beta_i+c_j\beta_j=\sum_{i\le k \le j}c'_{k}\beta_k$ with $c'_i<c_i$, $c'_j<c_j$ and $f_{{\bf c}'} \in {\bf k}$.

\begin{df} \label{df:Uq-w}
{\em 
For $w \in W$, we denote by $U_q^-(w)$ the vector space over ${\bf k}$ generated by $\left\{\,F\left({\bf c},\wtd{w}\right)\,\left|\,{\bf c}\in \Z_+^\ell\right.\right\}$.
}
\end{df}

\noindent
We remark that $U_q^-(w)$ does not depend on the choice of $\wtd{w}\in R(w)$, and it is the ${\bf k}$-subalgebra of $U_q^-(\mf{g})$ generated by $\left\{\left.F(\beta_k)\,\right|\,1\le k\le \ell\,\right\}$ by \eqref{eq:LS formula}.

Let $F^{\rm up}({\bf c},\wtd{w})$ be the dual of \eqref{eq:PBW monomial} with respect to the Kashiwara bilinear form $(\,\cdot\,\,,\,\,\cdot\,)_{\rm K}$ on $U_q^-(\g)$ \cite[Section 3.4]{Kas91}.
Since $U_q^-(\g)$ is a (twisted) self-dual bialgebra with respect to $(\,\cdot\,\,,\,\,\cdot\,)_{\rm K}$, we regard $F^{\rm up}({\bf c},\wtd{w})$ as an element of $U_q^-(\g)$ by
\begin{equation} \label{eq:dual PBW monomial}
	F^{\rm up}({\bf c},\wtd{w})
	=
	\frac{1}{\left(\,F({\bf c},\wtd{w})\,,\,F({\bf c},\wtd{w})\,\right)_{\rm K}} F({\bf c},\wtd{w}) \in U_q^-(\g).
\end{equation}
Let $\mc{A} = \mathbb{C}\left[q^{\pm 1}\right]$ and let $U_q^-(w)^{\rm up}_{\mc{A}}$ be the $\mc{A}$-lattice generated by $F^{\rm up}({\bf c},\wtd{w})$ for ${\bf c} \in \Z_+^\ell$.
Then the $\bk$-subalgebra $U_q^-(w)$ is called the {\em unipotent quantum coordinate ring}, since $\C\ot_{\mc{A}}U_q^-(w)^{\rm up}_{\mc{A}}$ is isomorphic to the coordinate ring of the unipotent subgroup $N(w)$ of the Kac-Moody group associated to $w$ (see \cite{GLS13, Ki12}).

\subsection{Category $\mc{O}$ and prefundamental modules}
Assume that $\g$ is of untwisted affine type.
Let $U_q'(\g)$ be the {\it quantum affine algebra} associated to $\g$ without the degree operator $q^d$.
It is well-known in \cite{B94, D} that $U_q'(\g)$ is also isomorphic to the $\bk$-algebra generated by
$x_{i,r}^\pm$ ($i\in\I, r\in \Z$), $k_{i}^{\pm 1}$ $(i\in\I)$, $h_{i,r}$ ($i\in\I, r\in \Z\setminus\{0\}$), and $C^{\pm \frac{1}{2}}$ subject to the following relations:
{\allowdisplaybreaks
\begin{gather*}
	\text{$C^{\pm \frac{1}{2}}$ are central with $C^{\frac{1}{2}}C^{-\frac{1}{2}}=C^{-\frac{1}{2}}C^{\frac{1}{2}} = 1$,} \\
	k_ik_j=k_jk_i,\quad k_ik_i^{-1}=k_i^{-1}k_i=1,\\
	k_ih_{j,r}=h_{j,r}k_i,\quad k_ix^{\pm}_{j,r}k_i^{-1}=\qi^{\pm a_{ij}}x^{\pm}_{j,r},\\
	[h_{i,r},h_{j,s}]=\delta_{r,-s}\frac{1}{r}[r a_{ij}]_i\frac{C^r-C^{-r}}{\qi-\qi^{-1}},\\
	[h_{i,r},x^{\pm}_{j,s}]=\pm \frac{1}{r}[r a_{ij}]_i C^{\mp |r|/2}x^{\pm}_{j,r+s},\\
	x^\pm_{i,r+1}x^\pm_{j,s}-\qi^{\pm a_{ij}}x^\pm_{j,s}x^\pm_{i,r+1}
	= \qi^{\pm a_{ij}}x^\pm_{i,r}x^\pm_{j,s+1}-x^\pm_{j,s+1}x^\pm_{i,r},\\
	[x^+_{i,r},x^-_{j,s}]=\delta_{i,j}\frac{C^{(r-s)/2}\psi^+_{i,r+s}-C^{-(r-s)/2}\psi^-_{i,r+s}}{\qi-\qi^{-1}},\\
	\sum_{w\in \mf{S}_m}\sum_{k=0}^m {\small \left[\begin{matrix} m \\ k \end{matrix}\right]}_i x^\pm_{i,r_{w(1)}}\dots x^\pm_{i,r_{w(k)}}x^\pm_{j,s}x^\pm_{i,r_{w(k+1)}}\dots x^\pm_{i,r_{w(m)}}=0\quad (i\neq j),
\end{gather*}
}
\noindent \!\!where $r_1, \dots, r_m$ is any sequence of integers with $m=1-a_{ij}$, $\mf{S}_m$ denotes the group of permutations on $m$ letters, and $\psi^\pm_{i,r}$'s are the elements determined by the following identity of formal power series in $z$:
\begin{equation}\label{eq:psi generators}
	\sum_{r=0}^\infty \psi^\pm_{i,\pm r} z^{\pm r} = k_i^{\pm 1}\exp\left( \pm(\qi-\qi^{-1}) \sum_{s=1}^\infty h_{i,\pm s} z^{\pm s} \right).
\end{equation}
\smallskip

Let $U_q(\bo)$ be the subalgebra of $U_q'(\g)$ generated by $e_i$ and $k_i^{\pm 1}$ for $0 \le i \le n$ and $C^{\pm \frac{1}{2}}$.

\begin{rem} \label{rem: presentation of Borel subalgebra}
{\em 	
Note that $U_q(\bo)$ is a Hopf subalgebra of $U_q'(\g)$ and it is isomorphic to the $\bk$-algebra with generators $e_i$ and $k_i^{\pm 1}$ for $0 \le i \le n$ satisfying the relations in the Drinfeld-Jimbo's presentation of $U_q'(\g)$ except for the relations involving $f_i$'s (e.g.~see \cite[\S 4.21]{Jan}).
}
\end{rem}

Let $\mf{t}$ be the subalgebra of $U_q(\bo)$ generated by $k_i^{\pm 1}$ for $i\in\I$, and let $\mf{t}^\ast = (\bk^\times)^{\I}$ be the set of maps from $\I$ to $\bk^\times$, which is a group under pointwise multiplication.

Let $U_q'(\g)^\pm$ (resp.~$U_q'(\g)^0$) be the subalgebras of $U_q'(\g)$ generated by $x_{i,r}^{\pm}$ for $i\in\I$ and $r\in \Z$ $\big( \text{resp. $k_i^{\pm 1}$, $\psi^\pm_{i,\pm r}$ for $i\in\I$, $r>0$ and $C^{\pm \frac{1}{2}}$} \big)$. 
If we put $U_q(\bo)^+ =U_q'(\g)^+\cap U_q(\bo)$ and $U_q(\bo)^0 =U_q'(\g)^0\cap U_q(\bo)$, then 
it follows from \cite{B94,B94b} (cf.~\cite{BCP, HJ}) that
$U_q(\bo)^+=\langle\,x_{i,r}^+\,\rangle_{i\in\I, r\ge 0}$ and
$U_q(\bo)^0=\langle\,\psi_{i,r}^+, k_i^{\pm 1}, C^{\pm \frac{1}{2}} \,\rangle_{i\in\I, r>0}$.

\begin{rem}
	{\em
		Throughout this paper, we assume that $C^{\pm \frac{1}{2}}$ acts trivially on a $U_q(\bo)$-module.
	}
\end{rem}

Let $V$ be a $U_q(\bo)$-module. For $\omega \in {\mf t}^*$, we define the weight space of $V$ with weight $\omega$ by
\begin{equation} \label{eq:weight space wrt t}
	V_{\omega} = \{\, v \in V \, | \, k_i v = \omega(i)v \  (i \in \I) \,\}.
\end{equation}
We say that $V$ is {\em ${\mf t}$-diagonalizable} if $V = \oplus_{\omega \in {\mf t}^*} V_{\omega}$.

A series ${\bf \Psi} = (\Psi_{i, m})_{i \in \I, m \ge 0}$ of elements in $\bk$ such that $\Psi_{i, 0} \neq 0$ for all $i \in \I$ is called an {\em $\ell$-weight}.
We often identify ${\bf \Psi} = (\Psi_{i, m})_{m \ge 0}$ with ${\bf \Psi} = (\Psi_i(z))_{i \in \I}$, a tuple of formal power series, where
\begin{equation*}
	\Psi_i(z) = \sum_{m \ge 0} \Psi_{i,m} z^m.
\end{equation*}
We denote by ${\mf t}_{\ell}^*$ the set of $\ell$-weights. Since $\Psi_i(z)$ is invertible, ${\mf t}_{\ell}^*$ is a group under multiplication. Let $\varpi : {\mf t}_{\ell}^* \longrightarrow {\mf t}^*$ be the surjective morphism defined by $\varpi(\Psi)(i) = \Psi_{i,0}$ for $i\in \I$.

For ${\bf \Psi} \in {\mf t}_{\ell}^*$, we define the {\em $\ell$-weight space} of $V$ with $\ell$-weight ${\bf \Psi}$ by
\begin{equation*}
	V_{\bf \Psi} = \left\{ v \in V \, \left| \text{ there exist } p \in \mathbb{Z}_+ \text{ such that } (\psi_{i,m}^+ - \Psi_{i,m})^p v = 0 \text{ for all } i \in I \text{ and } m \ge 0 \right. \right\}.
\end{equation*}
For ${\bf \Psi} \in {\mf t}_{\ell}^*$, we say that $V$ is of {\em highest $\ell$-weight $\bf \Psi$} if there exists a non-zero vector
$v \in V$ such that
\vskip 2mm
\begin{center}
	(i) $V = U_q(\bo)v$ \,\, (ii) $e_i v = 0$ for all $i \in \I$ \,\, (iii) $\psi_{i,m}^+ v = \Psi_{i, m}v$ for $i \in \I$ and $m \ge 0$.
\end{center}
\vskip 2mm
A non-zero vector $v \in V$ is called a {\it highest weight vector of $\ell$-weight ${\bf \Psi}$} if it satisfies the conditions (ii) and (iii).
There exists a unique irreducible $U_q(\bo)$-module of highest $\ell$-weight $\bf \Psi$, which we denote by $L({\bf \Psi})$.

\begin{df}{\em \!\!\!\cite[Definition 3.7]{HJ}} \label{df:prefundamental}
	{\em
	For $r \in I_0$ and $a \in \bk^{\times}$, let $L_{r, a}^{\pm}$ be an irreducible $U_q(\bo)$-module of highest $\ell$-weight ${\bf \Psi}$ with
	\begin{equation*} 
		\Psi_i(z) =
		\left\{
		\begin{array}{ll}
			(1-az)^{\pm1} & \text{if $i=r$},      \\
			1             & \text{if $i \neq r$}.
		\end{array}
		\right.
	\end{equation*}
	The $U_q(\bo)$-module $L_{r, a}^-$ (resp.~$L_{r, a}^+$) is called the {\em negative} (resp.~{\em positive}) {\em prefundamental module}.
	}
\end{df}

We define a map $\widetilde{\,\,\,} : \mr{\sfP} \rightarrow {\mf t}^*$ given by $\widetilde{\varpi}_i(j) = q_i^{\delta_{ij}}$ for $i, j \in \I$.
Note that $\widetilde{\alpha}_i \in {\mf t}^*$ is given by $\widetilde{\alpha}_i(j)= q_i^{a_{ij}}$ for $i, j \in \I$.
We define a partial order $\le$ on ${\mf t}^*$ by
$\omega' \le \omega$  if and only if $\omega'\omega^{-1}$ is a product of $\widetilde{\alpha}_i^{-1}$'s.
For $\lambda \in {\mf t}^*$, put $D(\lambda) = \{\, \omega \in {\mf t}^* \, | \, \omega \le \lambda \,\}$.

\begin{df}{\em \!\!\!\cite[Definition 3.8]{HJ}} \label{df:category O}
	{\em
		Let $\mathcal{O}$ be the category of $U_q(\bo)$-modules $V$ such that
		\begin{enumerate}
			\item[(i)] $V$ is ${\mf t}$-diagonalizable,
			\item[(ii)] $\dim V_{\omega} < \infty$ for all $\omega \in {\mf t}^*$,
			\item[(iii)] there exist $\lambda_1, \dots, \lambda_s \in {\mf t}^*$ such that the weights of $V$ are in $\bigcup_{j=1}^s D(\lambda_j)$.
		\end{enumerate}
	}
\end{df}

\noindent
The category $\mc{O}$ is closed under taking finite direct sums,  quotients, and finite tensor products of objects in $\mc{O}$.

The simple objects in $\mc{O}$ are characterized in terms of tuples ${\bf \Psi} = (\Psi_i(z))_{i \in \I}$ of rational functions regular and non-zero at $z = 0$, called {\it Drinfeld rational fractions}, which can be regarded as a natural extension of {\it Drinfeld polynomials} \cite{CP95a, CP95b}, as follows:
\begin{thm}{\em \!\!\!\cite[Theorem 3.11]{HJ}} \label{thm:characterization of simples in O}
	For ${\bf \Psi} \in {\mf t}_{\ell}^*$, $L({\bf \Psi})$ is in the category $\mc{O}$ if and only if $\Psi_i(z)$ is rational for all $i\in \I$.
\end{thm}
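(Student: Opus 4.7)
The proof strategy divides naturally into the two implications. For the ``if'' direction, the plan is to show that any rational tuple ${\bf \Psi}$ decomposes into $\ell$-weights of prefundamental modules together with a one-dimensional factor, and that $L({\bf \Psi})$ then appears as a subquotient of a finite tensor product of such. For the ``only if'' direction, the plan is to exploit the finite-dimensionality of weight spaces in $\mc{O}$ to force a linear recursion among the coefficients $\Psi_{i,m}$.

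For sufficiency, I would first observe that any non-zero rational function $\Psi_i(z) \in \bk(z)$ regular at $z=0$ factors as $\Psi_i(0) \prod_k (1-a_k z) \prod_l (1-b_l z)^{-1}$ for finitely many $a_k, b_l \in \bk^\times$. Correspondingly, ${\bf \Psi}$ decomposes in $\mf{t}_\ell^*$ as a product of the constant tuple $(\Psi_i(0))_{i \in \I}$ with finitely many $\ell$-weights of the form $(1-az)^{\pm 1}$ concentrated in a single slot. The constant tuple is realized by a one-dimensional $U_q(\bo)$-module on which all positive Drinfeld generators act as zero, while each factor $(1-a z)^{\pm 1}$ in the $r$-th slot is the highest $\ell$-weight of the prefundamental module $L_{r,a}^{\mp}$. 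The crucial input is that every $L_{r,a}^\pm$ itself lies in $\mc{O}$: this is established by constructing $L_{r,a}^-$ as an inductive limit of Kirillov-Reshetikhin modules using the convergence of their normalized $q$-characters (Hernandez-Nakajima), and $L_{r,a}^+$ via a suitable twist automorphism of $U_q(\bo)$. Since $\mc{O}$ is closed under finite tensor products and under quotients, the tensor product of the one-dimensional module with the chosen prefundamentals lies in $\mc{O}$, has ${\bf \Psi}$ as its highest $\ell$-weight, and admits $L({\bf \Psi})$ as a subquotient; hence $L({\bf \Psi}) \in \mc{O}$.

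For necessity, suppose $L({\bf \Psi}) \in \mc{O}$ with highest weight vector $v$ of $\mf{t}$-weight $\lambda = \varpi({\bf \Psi})$, and fix $i \in \I$. The weight space $L({\bf \Psi})_{\lambda \wtd{\alpha}_i^{-1}}$ is finite-dimensional by the definition of $\mc{O}$. Acting on $v$ by the Drinfeld generators $x_{i,r}^-$ (which enter through commutators of positive Drinfeld generators with elements of $U_q(\bo)$, or equivalently through the action on a sufficiently large quotient of the Verma-type module generated by $v$) produces vectors in this finite-dimensional weight space, so there exists a linear relation $\sum_{r=0}^N c_r x_{i,r}^- v = 0$ for some $N$ and scalars $c_r$. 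Applying $\psi_{i,s}^+$ to this relation and invoking the Drinfeld commutation law $[\psi_{i,s}^+, x_{i,r}^-] = -[\,a_{ii}\,]_{q_i} x_{i,r+s}^-$ (modulo central contributions) then yields a linear recurrence of order $N$ on the eigenvalues $\Psi_{i,m}$, which is equivalent to the rationality of the generating series $\Psi_i(z)$.

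The principal obstacle is the existence of the prefundamental modules and the verification that they lie in $\mc{O}$: controlling the infinite chain of Kirillov-Reshetikhin modules and ensuring that a genuine limit object exists with finite-dimensional weight spaces is delicate, and is precisely the content of Hernandez-Jimbo's original construction. This difficulty is the motivation for the alternative realization developed in the body of the paper via unipotent quantum coordinate rings, most concretely in the cominuscule case treated in Theorem \ref{thm:positive case}.
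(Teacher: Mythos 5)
The paper gives no proof of this statement --- it is imported wholesale from Hernandez--Jimbo \cite[Theorem 3.11]{HJ} --- so there is no internal argument to measure yours against; what you have written is essentially a reconstruction of the original proof, and its overall architecture (factor a rational $\ell$-weight into prefundamental $\ell$-weights times a constant tuple for sufficiency; use finite-dimensionality of the weight space $L({\bf \Psi})_{\varpi({\bf \Psi})\widetilde{\alpha}_i^{-1}}$ to extract a linear recurrence for necessity) is the right one. Two details need repair. First, your sign convention is inverted: by Definition \ref{df:prefundamental}, the factor $(1-az)^{+1}$ in the $r$-th slot is the highest $\ell$-weight of $L_{r,a}^{+}$ and $(1-az)^{-1}$ that of $L_{r,a}^{-}$, not the other way around. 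Second, in the necessity argument the commutator you invoke, $[\psi_{i,s}^{+}, x_{i,r}^{-}]$, is not a single shifted generator ($\psi_{i,s}^{+}$ is a polynomial in the $h_{i,m}$, so this bracket is a sum of terms); the clean route is to note that $x_{i,r}^{-}\in U_q(\bo)$ for $r\ge 1$, take a nontrivial relation $\sum_{r=1}^{N} c_r\, x_{i,r}^{-} v = 0$ inside the finite-dimensional weight space, and apply $x_{i,s}^{+}$ for each $s\ge 0$: the relation $[x^+_{i,s},x^-_{i,r}]=\bigl(\psi^+_{i,r+s}-\psi^-_{i,r+s}\bigr)/(q_i-q_i^{-1})$ together with $e_i v=0$ then yields the constant-coefficient recurrence $\sum_{r} c_r \Psi_{i,r+s}=0$ for all $s$, which is exactly rationality of $\Psi_i(z)$. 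You are also right to locate the genuine difficulty in the sufficiency direction, namely that $L^{\pm}_{r,a}\in\mc{O}$ at all (the asymptotic limit of Kirillov--Reshetikhin modules), which is the content of \cite[Corollaries 4.8 and 5.1]{HJ} recorded in Remark \ref{rem:prefundamentals are building blocks of O}.
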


\begin{rem} \label{rem:prefundamentals are building blocks of O}
{\em 
For ${\bf \Psi}, {\bf \Psi}'\in {\mf t}_{\ell}^*$, 
it follows from the formulas of $\Delta(\Psi_{i,\pm k}^{\pm})$ and $\Delta(x_{i,k}^+)$ \cite{Dami00} (see also \cite[Theorem 2.6]{HJ}) that $L({\bf \Psi}{\bf \Psi}')$ is a subquotient of $L({\bf \Psi})\ot L({\bf \Psi}')$.
Since we have by \cite[Corollary 4.8, Corollary 5.1]{HJ} that $L_{r,a}^{\pm} \in \mc{O}$ for all $r \in I$ and $a \in \bk^{\times}$, we conclude that any irreducible $U_q(\bo)$-module in $\mc{O}$ is a subquotient of a tensor product of prefundamental modules and one-dimensional modules.
	Here a one-dimensional module has a trivial action of $e_0, \dots e_n$ (see \cite[Section 3.2]{HJ} and \cite[Definition 3.6]{FH}).
}
\end{rem}

\subsection{$U_q(\bo)$-module structure on $U_q(w)$}

Let $\hat{w} \in \eW$ be given, where $\hat{w}$ can be written as $\hat{w} = s_{i_1} s_{i_2} \cdots s_{i_\ell} \tau$ with $\tau \in \mc{T}$ by Proposition \ref{prop:Weyl}.
Then we define 
\begin{equation} \label{eq:def of Tw}
	T_{\hat{w}} = T_{i_1} \cdots T_{i_\ell} \tau \in {\rm Aut}(U_q(\g)),
\end{equation}
where $\tau$ is understood as an automorphism of $U_q(\g)$ \cite{B94}.
Note that $T_{\hat{w}}$ is independent of the choice of the reduced expression of $\hat{w}$.
For simplicity, we set $T_\lambda = T_{t_\lambda}$ in the case of $\hat{w} = t_\lambda$ for $\lambda \in \eM$.

\begin{lem}\!\!\!{\em \cite[Lemma 3.2]{B94}} \label{lem:action of Tlar}
	\,$T_{-\lar}(e_i) = e_i$ for $i \in \I \setminus \{r\}$.
\end{lem}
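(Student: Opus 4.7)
The plan is to reduce the assertion to the standard fact that Lusztig's braid operators transport simple root generators in the same way the Weyl group transports simple roots. By \eqref{eq:translation by fundamentals} we have $t_{-\lar} = t_r^{-1} = w_r \tau$ in $\eW$, and the definition \eqref{eq:def of Tw} then gives $T_{-\lar} = T_{w_r} \circ \tau$ as $\bk$-algebra automorphisms of $U_q(\g)$, where $\tau \in \mc{T}$ acts on the Chevalley generators via the corresponding affine Dynkin diagram automorphism, in particular $\tau(e_j) = e_{\tau(j)}$ for every $j \in I$. Consequently, for $i \in \I \setminus \{r\}$,
\[
T_{-\lar}(e_i) \;=\; T_{w_r}\bigl(e_{\tau(i)}\bigr),
\]
so the task reduces to evaluating $T_{w_r}(e_{\tau(i)})$.

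Next I would compute how the affine Weyl group element $w_r$ acts on the simple root $\alpha_{\tau(i)}$. Since $w_r = t_{-\lar}\tau^{-1}$ in $\eW$ and $\tau^{-1}(\alpha_{\tau(i)}) = \alpha_i$,
\[
w_r(\alpha_{\tau(i)}) \;=\; t_{-\lar}(\alpha_i) \;=\; \alpha_i,
\]
where the last equality is exactly Remark \ref{rem:tr acts on al}. Thus $w_r$ sends the simple root $\alpha_{\tau(i)}$ (possibly $\alpha_0$, when $\tau(i) = 0$) to the simple root $\alpha_i$.

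The final step is to invoke the following standard property of Lusztig's braid operators: for $w \in W$ and $j, i \in I$ with $w(\alpha_j) = \alpha_i$ a simple root, one has $T_w(e_j) = e_i$. Applied with $w = w_r$ and $j = \tau(i)$, this yields $T_{w_r}(e_{\tau(i)}) = e_i$, completing the argument. The main obstacle is precisely this standard fact. The positivity $w(\alpha_j) > 0$ together with the weight constraint forces $T_w(e_j)$ to lie in $U_q^+(\g)_{\alpha_i} = \bk\, e_i$, so $T_w(e_j) = c\, e_i$ for some $c \in \bk$; the compatibility $T_w(k_j) = k_i$ (which holds because $(\alpha_i,\alpha_i) = (\alpha_j,\alpha_j)$) together with $[e_j,f_j] = (k_j - k_j^{-1})/(q_j - q_j^{-1})$ pins down $c^2 = 1$, and the sign $c = +1$ is fixed by a short induction on $\ell(w)$ using a reduced expression adapted to the braid identity $s_i w = w s_j$.
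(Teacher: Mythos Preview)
Your proof is correct and follows essentially the same approach as the paper: both reduce to the fact that $t_{-\lar}(\alpha_i) = \alpha_i$ (Remark \ref{rem:tr acts on al}) and then invoke the standard compatibility of Lusztig's braid operators with simple roots. The paper is slightly more concise, citing \cite[Lemma 3.2]{B94} directly for the extended affine Weyl group element $t_{-\lar}$, which makes your decomposition $T_{-\lar} = T_{w_r}\tau$ and the subsequent reduction to $w_r \in W$ unnecessary.
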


Let us consider the $\ms{B}_q(\g)$-module $M$ \eqref{eq:def of M}, and let
$$
	\iota : \!\!\xymatrix@C=2em{U_q^-(\g) \ar@{->}[r] & M}
$$ 
be the canonical projection from $U_q^-(\g)$ to $M$, which is an isomorphism of $\ms{B}_q(\g)$-modules by Lemma \ref{lem:M is isom to Uq- as Bq-mod}.
For $w \in W$, we define
\begin{equation} \label{eq:def of Mw}
	M_w = \iota\left( U_q^-(w) \right) \subset M.
\end{equation}
We may identify $U_q^-(w)$ with $M_w$.
For simplicity, we denote $\overline{x} \in M_w$ by $x$ if there is no confusion (cf.~\eqref{eq:def of M}).

\begin{lem} \label{lem:closedness of Mr}
	$M_w$ is closed under the action of $\mb{e}_i'$ for all $i\in I$.
\end{lem}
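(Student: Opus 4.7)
The first step is to reinterpret closedness of $M_w$ under $\mb{e}_i'$ as a closedness statement at the level of $U_q^-(\mf{g})$. By Lemma~\ref{lem:realization of Bqg} together with Lemma~\ref{lem:M is isom to Uq- as Bq-mod}, the isomorphism $\iota : U_q^-(\mf{g}) \to M$ intertwines the $\ms{B}_q(\mf{g})$-actions, so that $\mb{e}_i'$ acts on $M$ through the Kashiwara $q$-derivation $e_i'$ introduced in Lemma~\ref{lem:negative half is irreducible module over boson algebra}. Since by \eqref{eq:def of Mw} we have $M_w = \iota(U_q^-(w))$, the lemma is equivalent to the assertion $e_i'(U_q^-(w)) \subset U_q^-(w)$ for every $i \in I$.

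Fix a reduced expression $\wtd{w} = (i_1,\dots,i_\ell) \in R(w)$. The subalgebra $U_q^-(w) \subset U_q^-(\mf{g})$ is generated by the root vectors $F(\beta_1),\dots,F(\beta_\ell)$ of \eqref{eq:root vector}, with the LS relations \eqref{eq:LS formula} controlling their products. Since the twisted Leibniz rule \eqref{eq:derivation ei'} gives $e_i'(xy) = e_i'(x)y + q_i^{\langle h_i,\mathrm{wt}(x)\rangle} x\, e_i'(y)$ and $U_q^-(w)$ is a subalgebra, to establish closure it suffices to verify $e_i'(F(\beta_k)) \in U_q^-(w)$ for each $k = 1,\dots,\ell$; an induction on the number of factors in an arbitrary PBW monomial $F({\bf c},\wtd{w})$ then propagates closure to all of $U_q^-(w)$.

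The single-generator case is handled by induction on $k$. The base case $k = 1$ is immediate: $F(\beta_1) = f_{i_1}$, and $e_i'(f_{i_1}) = \delta_{i,i_1} \in U_q^-(w)$. For $k \ge 2$, expand the generator $F(\beta_k) = T_{i_1}\cdots T_{i_{k-1}}(f_{i_k})$ via the braid formulas \eqref{eq:Ti(fj)} into a polynomial in the simple root generators $f_{i_1},\dots,f_{i_k}$, then rewrite this polynomial in terms of the earlier root vectors $F(\beta_1),\dots,F(\beta_k)$ using iterated applications of the LS relations \eqref{eq:LS formula}. Applying $e_i'$ to this rewritten form by \eqref{eq:derivation ei'} and invoking the inductive hypothesis on the lower-indexed root vectors yields an element of $U_q^-(w)$.

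The principal obstacle lies in the inductive step: iterated braid action \eqref{eq:Ti(fj)} produces Serre-type polynomials whose rewriting against the PBW basis is combinatorially delicate, so a brute-force computation becomes unwieldy. A cleaner and more structural alternative is to pass to the $q$-shuffle realization of $U_q^-(\mf{g})$ recalled in Appendix~\ref{appendixA}: under this realization, $U_q^-(w)$ corresponds to the shuffle span of words supported on $\sfDel^+(w)$, and $e_i'$ is translated into a simple word-deletion operator which manifestly stabilizes that span, thereby giving the closure statement with no explicit braid manipulations.
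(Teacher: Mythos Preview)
Your reduction in the first paragraph is correct: the lemma amounts to showing $e_i'(U_q^-(w)) \subset U_q^-(w)$, and by the Leibniz rule \eqref{eq:derivation ei'} it suffices to check this on the root vectors $F(\beta_k)$. However, your inductive argument for this step does not go through. When you expand $F(\beta_k)$ via \eqref{eq:Ti(fj)} as a polynomial in the simple generators $f_{i_1},\dots,f_{i_k}$, the individual monomials in that expansion are \emph{not} in $U_q^-(w)$ (only their specific linear combination is), so applying $e_i'$ term by term and ``invoking the inductive hypothesis on the lower-indexed root vectors'' has no content: the terms you obtain are polynomials in $f_{i_j}$'s, not in earlier root vectors, and the simple generator $f_{i_j}$ need not belong to $U_q^-(w)$ unless $\alpha_{i_j}\in\sfDel^+(w)$. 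There is no non-circular way to rewrite $F(\beta_k)$ as a product of strictly lower-indexed $F(\beta_j)$'s, so the induction never gets off the ground. The shuffle alternative is also a gap: Appendix~\ref{appendixA} treats only the finite types $B_n$ and $C_n$, whereas the lemma concerns the affine algebra and arbitrary $w\in W$; and in any case the shuffle image of $U_q^-(w)$ is not the ``span of words supported on $\sfDel^+(w)$'' in any sense that makes last-letter deletion manifestly stabilizing.

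The paper's argument is entirely different and avoids any induction on root vectors. It invokes the characterization of $U_q^-(w)$ from \cite[Theorem~2.20]{KKOP18}: after applying $*$, an element $x\in U_q^-(\mf{g})$ lies in $U_q^-(w)$ if and only if $e_{i_1}^\star\cdots e_{i_\ell}^\star x=0$ for every sequence with $\sum_k\alpha_{i_k}\in\sfDel_+\cap w\sfDel_+$, where $e_j^\star=*\circ e_j'\circ *$ is the other $q$-derivation of Remark~\ref{rem:eistar}. The key structural fact is that $e_i'$ and $e_j^\star$ commute for all $i,j$; hence $e_{i_1}^\star\cdots e_{i_\ell}^\star\bigl(e_i'x\bigr)=e_i'\bigl(e_{i_1}^\star\cdots e_{i_\ell}^\star x\bigr)=0$, so $e_i'x\in U_q^-(w)$ as well. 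This is the missing idea in your proposal.
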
	
\begin{proof}
Let $A_w$ be the ${\bf k}$-linear subspace of $U_q^-(\g)$ spanned by all elements $x \in U_q^-(\g)$ such that
$$
e_{i_1}' \dots e_{i_\ell}' x = 0
$$
for any $\beta \in \Delta^+ \cap w\Delta^+$ and any sequence $(i_1, \dots, i_\ell) \in I^\ell$ satisfying $\sum_{k=1}^\ell \alpha_{i_k} = \beta$, where $\ell = {\rm ht}(\beta)$.
Since \cite[Theorem 2.20]{KKOP18} says $A_q(n(w)) = A_w$ in the convention of \cite{KKOP18}, we have  
\begin{equation*}
	U_q^-(w) = *\left( A_w \right).
\end{equation*}
By Remark \ref{rem:eistar}, for any $x \in U_q^-(\g)$, we have that $x \in U_q^-(w)$ if and only if 
$$
e_{i_1}^\star \dots e_{i_\ell}^\star x = 0
$$
for any $\beta \in \Delta^+ \cap w\Delta^+$ and any sequence $(i_1, \dots i_\ell) \in I^\ell$ such that $\sum_{k=1}^\ell \alpha_{i_k} = \beta$.
Since $e_j^\star$ commutes with $e_i'$,  we have 
$$
 e_{i_1}^\star \dots e_{i_\ell}^\star (e_i' x) =   e_i'( e_{i_1}^\star \dots e_{i_\ell}^\star  x) = 0
$$ 
for any $x \in U_q^-(w) $ and any sequence $(i_1, \dots i_\ell) \in I^\ell$  with $\sum_{k=1}^\ell \alpha_{i_k} \in \sfDel^+ \cap w\sfDel^+$.
Therefore the element $e_i'(x)$ is contained in $U_q(w)$ for any $x\in U_q(w)$ and $i\in I$.
This proves that $M_w$ in \eqref{eq:def of Mw} is closed under the action of $\mb{e}_i'$ for all $i\in I$.
\end{proof}

\begin{rem}
{\em 
The convention for quantum unipotent coordinate rings $A_q(n(w))$ used in \cite{KKKO18, KKOP18} is different from the one in this paper, where $A_q(n(w))$ in \cite{KKOP18} is generated by the dual root vectors defined by using $T_i^* := * \circ T_i \circ *$.
}
\end{rem}

For $r \in \I$, let us write $M_r = M_{w_r}$ for simplicity, where $w_r$ is given in \eqref{eq:translation by fundamentals}.
For $r \in \I$, let
\begin{equation*} 
	x_0 = T_{-\lar}(e_0).
\end{equation*}

\begin{lem} \label{lem:description of x0}
	We have
	$$x_0 = y_0 q_0^{-\tr^{-1}(h_0)} = q_0^{-\tr^{-1}(h_0)} q_0^2 y_0$$
	for some $y_0 \in U_q^-(w_r)$ such that $e_r'(y_0) = 0$.
\end{lem}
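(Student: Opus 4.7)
The plan is to use Corollary \ref{cor:rex of wr ending stau0} to fix a reduced expression $w_r = s_r s_{i_2}\cdots s_{i_{\ell-1}} s_{\tau(0)}$ of length $\ell = \ell(w_r)$ and then unpack $T_{-\lar}(e_0)$ one braid operator at a time. Setting $v := s_r s_{i_2}\cdots s_{i_{\ell-1}}$ so that $w_r = v s_{\tau(0)}$, and noting that the diagram automorphism sends $\tau(e_0) = e_{\tau(0)}$, the definition \eqref{eq:def of Tw} together with $T_{\tau(0)}(e_{\tau(0)}) = -f_{\tau(0)}k_{\tau(0)}$ from \eqref{eq:Ti(ej)} yields
\[
x_0 = T_{\tr^{-1}}(e_0) = T_v\bigl(T_{\tau(0)}(e_{\tau(0)})\bigr) = -T_v(f_{\tau(0)})\,T_v(k_{\tau(0)}).
\]

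Next I would identify the Cartan factor and reshuffle. A direct computation using $v = w_r s_{\tau(0)}$, $\tr^{-1} = w_r \tau$, $\tau(h_0) = h_{\tau(0)}$, and $s_{\tau(0)}(h_{\tau(0)}) = -h_{\tau(0)}$ gives $v(h_{\tau(0)}) = -\tr^{-1}(h_0)$, and since $d_{\tau(0)} = d_0$, this forces $T_v(k_{\tau(0)}) = q_0^{-\tr^{-1}(h_0)}$. To commute this Cartan element past $T_v(f_{\tau(0)})$, I would note that $T_v(f_{\tau(0)})$ has weight $-\beta_\ell$ with $\beta_\ell = v(\alpha_{\tau(0)}) = -\tr^{-1}(\alpha_0) = (a_r-1)\delta + \theta$ (using Remark \ref{rem:tr acts on al}), and the $\eW$-invariance of the pairing yields $\langle \tr^{-1}(h_0),\,-\beta_\ell\rangle = \langle h_0, \alpha_0\rangle = 2$. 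Hence the commutation contributes a factor of $q_0^2$ and I arrive at
\[
x_0 = q_0^{-\tr^{-1}(h_0)}\,y_0, \qquad y_0 := -q_0^2\,T_v(f_{\tau(0)}).
\]
Since $T_v(f_{\tau(0)})$ is exactly the root vector $F(\beta_\ell)$ for the chosen reduced expression of $w_r$ by \eqref{eq:root vector}, the inclusion $y_0 \in U_q^-(w_r)$ is automatic.

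The crux is to verify $e_r'(y_0) = 0$. Because the reduced expression begins with $s_r$, I would factor $v = s_r u$ with $u = s_{i_2}\cdots s_{i_{\ell-1}}$ reduced, obtaining $T_v(f_{\tau(0)}) = T_r(z)$ with $z := T_u(f_{\tau(0)}) \in U_q^-(s_r w_r) \subset U_q^-(\g)$. Since $T_r(z) = F(\beta_\ell) \in U_q^-(\g)$ as well, Remark \ref{rem:domain of Ti} puts $z \in T_r^{-1}(U_q^-(\g)) \cap U_q^-(\g) = {}^*U_q^-(\g)[r] = \ker e_r^\star$. The main obstacle, in my view, is the last step: to invoke (or re-derive) the Saito--Lusztig intertwining property, stating that $T_r$ maps $\ker e_r^\star \cap U_q^-(\g)$ into $\ker e_r' \cap U_q^-(\g)$. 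A more hands-on alternative is to argue via the PBW decomposition of $U_q^-(w_r)$ for the chosen reduced expression: using the Leibniz-type formula \eqref{eq:derivation ei'}, the subspace $\ker e_r' \cap U_q^-(w_r)$ is spanned by PBW monomials $F(c_1\beta_1)\cdots F(c_\ell\beta_\ell)$ with $c_1 = 0$, and $y_0 = -q_0^2\,F(\beta_\ell)$ is plainly of this form.
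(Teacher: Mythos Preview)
Your proof is correct and follows essentially the same route as the paper: choose the reduced expression from Corollary~\ref{cor:rex of wr ending stau0}, apply $T_{\tau(0)}(e_{\tau(0)})=-f_{\tau(0)}k_{\tau(0)}$, identify the Cartan factor via $v(h_{\tau(0)})=-\tr^{-1}(h_0)$, and conclude $e_r'(y_0)=0$ from the fact that $T_r^{-1}(y_0)\in U_q^-(\mf g)$. The paper cites \cite[Proposition~38.1.6]{Lu10} directly for this last step, whereas you route it through $z\in\ker e_r^\star$ and then invoke the Saito--Lusztig intertwiner $T_r:\ker e_r^\star\to\ker e_r'$; these are two phrasings of the same fact, and your alternative PBW argument (monomials with $c_1=0$) also ultimately rests on it. Your extra $q_0^2$ in $y_0$ comes from commuting the Cartan factor to the left rather than leaving it on the right as the paper does, which is harmless since the lemma only asks for \emph{some} $y_0\in U_q^-(w_r)$.
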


\begin{proof} 
	By Corollary \ref{cor:rex of wr ending stau0},
	one can take a reduced expression of $w_r$, say $s_{i_1} \dots s_{i_\ell}$, with $i_1 = r$ and $i_\ell = \tau(0)$.
	By \eqref{eq:Ti(ej)} and \eqref{eq:def of Tw}, we compute
	\begin{equation} \label{eq:compute x_0}
	\begin{split}
		x_0 &= T_{i_1} \dots T_{i_\ell} \tau (e_0) \\
			&= T_{i_1} \dots T_{i_{\ell-1}}(-f_{\tau(0)}k_{\tau(0)}) \\
			&= -T_{i_1}\dots T_{i_{\ell-1}}(f_{\tau_r(0)})
			T_{i_1}\dots T_{i_{\ell-1}}(k_{\tau_r(0)})
	\end{split}
	\end{equation}
	Put $y_0 = -T_{i_1} \dots T_{i_{\ell-1}}(f_{\tau(0)})$.
	By Definition \ref{df:Uq-w},
	we have $y_0 \in U_q^-(w_r)$.
	Since $i_1 = r$, we have by Proposition \ref{prop:length of tr} and \cite[Proposition 40.1.3]{Lu10} 
	\begin{equation*}
		T_r^{-1}\left(y_0\right) \in U_q^-(s_rw_r) \subset U_q^-(\g).
	\end{equation*} 
	Thus it follows from \cite[Proposition 38.1.6]{Lu10} (cf.~\cite[Lemma 4.13]{Ki12})\footnote{In \cite{Lu10, Ki12}, the $q$-derivation $e_i'$ is denoted by ${}_ir$.} that $e_r'(y_0) = 0$.
	On the other hand, we have by \eqref{eq:Ti(qh)}
	\begin{equation*}
	\begin{split}
		T_{i_1} \dots T_{i_{\ell-1}} (k_{\tau(0)})  
		= T_{i_1} \dots T_{i_{\ell-1}} \left( q^{d_{\tau(0)}h_{\tau(0)}} \right)
		= q^{d_{\tau(0)} s_{i_1} \dots s_{i_{\ell-1}}(h_{\tau(0)})}
		= q_0^{-\tr^{-1}(h_0)}
	\end{split}
	\end{equation*}
	where $d_0 = d_{\tau(0)}$ and $i_\ell = \tau(0)$.
	Note that we also have $y_0 q_0^{-\tr^{-1}(h_0)} = q_0^{-\tr^{-1}(h_0)} q_0^2 y_0$ from \eqref{eq:compute x_0} by using the defining relation $q^h f_i= q^{-\langle h, \alpha_i \rangle} f_i q^h$.
	We complete the proof.
\end{proof}%

By Lemma \ref{lem:realization of Bqg}, Corollary \ref{cor:braid on q-bosons}, and Remark \ref{rem:domain of Ti}, ${\texttt T}_i$ is also understood as a $\bk$-algebra homomorphism from $\mf{U}^+[i]$ to $\mf{U}$, where $\mf{U}^+[i]$ is the $\bk$-subalgebra of $\mf{U}$ generated by $\mf{U}^+$ and ${}^*U_q^-(\g)[i]$.
By replacing $T_i$ with ${\texttt T}_i$ in \eqref{eq:def of Tw}, we denote by ${\texttt T}_{-\lar}$ the map associated to $T_{-\lar}$.
\vskip 1mm

Put
\begin{equation} \label{eq:def of mbx0}
	\mb{x}_0 = {\texttt T}_{-\lar} (\mb{e}_0').
\end{equation}

\begin{prop} \label{prop:action of mbx0}
	$\mb{x}_0$ is a well-defined element in $\mf{U}^-$, and 
	$$\mb{x}_0 \cdot \overline{m} = -(q_{\tau(0)}-q_{\tau(0)}^{-1})q_{\tau(0)}^2\overline{y_0m}$$ 
	for $\overline{m} \in M_r$, where $y_0$ is given in Lemma \ref{lem:description of x0}.
\end{prop}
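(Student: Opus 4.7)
The strategy is to compute $T_{-\lar}(\mb{e}_0')$ explicitly in $U_q^-(w_r) \subset \tilde{U}_q(\g)$, and then transport this calculation to $\mf{U}^-$ via the surjection $\pi: \tilde{U}_q(\g)\twoheadrightarrow \mf{U}$ together with the conjugation formula \eqref{eq:conjugation of T_i}. The computation of the action on $M_r$ then follows from Lemma \ref{lem:M is isom to Uq- as Bq-mod}.

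\textbf{Step 1.} By Corollary \ref{cor:rex of wr ending stau0}, choose a reduced expression $w_r = s_{i_1}\cdots s_{i_{\ell-1}}s_{\tau(0)}$ ending in $s_{\tau(0)}$, and set $w_r' = s_{i_1}\cdots s_{i_{\ell-1}}$, so $\ell(w_r' s_{\tau(0)}) = \ell(w_r') + 1$. Since $\tau(\mb{e}_0') = \mb{e}_{\tau(0)}'$ and $q_0 = q_{\tau(0)}$, formula \eqref{eq:Ti acts on bej'} applied to $i=j=\tau(0)$ gives
\[
T_{\tau(0)}T_\tau(\mb{e}_0') \;=\; (q_{\tau(0)}-q_{\tau(0)}^{-1})\,q_{\tau(0)}^2\, f_{\tau(0)}.
\]
Apply $T_{w_r'}$ to this. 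The length-additivity $\ell(w_r's_{\tau(0)}) = \ell(w_r')+1$ ensures, via the standard Lusztig argument used in the proof of Lemma \ref{lem:description of x0}, that $T_{w_r'}(f_{\tau(0)}) \in U_q^-(\g)$; by Definition \ref{df:Uq-w} it lies in $U_q^-(w_r)$, and it equals $-y_0$ in the notation of Lemma \ref{lem:description of x0}. Therefore
\[
T_{-\lar}(\mb{e}_0') \;=\; -(q_{\tau(0)}-q_{\tau(0)}^{-1})\,q_{\tau(0)}^2\, y_0 \;\in\; U_q^-(w_r) \,\subset\, \tilde{U}_q(\g).
\]

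\textbf{Step 2.} To identify $\mb{x}_0 = \texttt{T}_{-\lar}(\mb{e}_0')$ with $\tilde{\pi}(T_{-\lar}(\mb{e}_0'))$, we iterate the conjugation formula \eqref{eq:conjugation of T_i} along the reduced word of Step 1. At each intermediate stage the element $T_{i_k}\cdots T_{i_{\ell-1}}T_{\tau(0)}T_\tau(\mb{e}_0')$ lies in $\tilde{U}_q(\g)[i_{k-1}]$: the length conditions in Proposition \ref{prop:length of tr} guarantee that the tail $T_{w_r'}(f_{\tau(0)})$ stays inside $U_q^-(\g)$ throughout and, at each step, in the subspace $^*U_q^-(\g)[i_{k-1}]$ defining the extended domain of $\texttt{T}_{i_{k-1}}$ in Remark \ref{rem:domain of Ti}. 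Consequently $\mb{x}_0$ is well defined, and since $\pi|_{U_q^-(\g)}$ is an isomorphism onto $\mf{U}^-$, we deduce $\mb{x}_0 \in \mf{U}^-$ and
\[
\mb{x}_0 \;=\; -(q_{\tau(0)}-q_{\tau(0)}^{-1})\,q_{\tau(0)}^2\, y_0
\]
under the identification $\mf{U}^- \cong U_q^-(\g)$.

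\textbf{Step 3.} By Lemma \ref{lem:M is isom to Uq- as Bq-mod}, under the canonical isomorphism $M \cong U_q^-(\g)$ the action of $\mf{U}^-$ on $M$ is left multiplication by the corresponding element of $U_q^-(\g)$. Since $y_0 \in U_q^-(w_r)$ and $U_q^-(w_r)$ is a $\bk$-subalgebra, we have $y_0 m \in U_q^-(w_r)$ whenever $m \in U_q^-(w_r)$, so $\overline{y_0 m} \in M_r$. Hence
\[
\mb{x}_0 \cdot \overline{m} \;=\; -(q_{\tau(0)}-q_{\tau(0)}^{-1})\,q_{\tau(0)}^2\, \overline{y_0 m},
\]
as claimed.

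\textbf{Main obstacle.} The delicate point is Step 2: the intermediate element $T_{\tau(0)}T_\tau(\mb{e}_0')$ is already a multiple of $f_{\tau(0)}$, hence outside $\ms{B}_q^+(\g)$, so the subsequent applications of $\texttt{T}_{i_{\ell-1}},\dots,\texttt{T}_{i_1}$ fall outside the domain of Corollary \ref{cor:braid on q-bosons} proper. One must invoke the extension from Remark \ref{rem:domain of Ti} and verify, via the length criteria of Proposition \ref{prop:length of tr} applied to successive truncations of the chosen reduced expression, that each intermediate element indeed lies in the relevant $\ms{B}_q^+(\g)[i_k]$.
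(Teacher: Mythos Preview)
Your proof is correct and follows essentially the same route as the paper: both compute $T_{-\lar}(\mb{e}_0')$ by first applying $\tau$ and $T_{\tau(0)}$ to produce a scalar multiple of $f_{\tau(0)}$, then apply $T_{w_r s_{\tau(0)}}$ using the reduced-expression property to stay in $U_q^-(\g)$, and finally descend to $\mf{U}^-$. Your Step~2 and ``Main obstacle'' discussion spell out the domain issue (via Remark~\ref{rem:domain of Ti}) more carefully than the paper, which simply says ``by the similar argument of Corollary~\ref{cor:braid rel}''; note, though, that what is actually needed at each intermediate stage is just that the tail $s_{i_k}\cdots s_{i_\ell}$ of the chosen reduced expression of $w_r$ is itself reduced, so invoking Proposition~\ref{prop:length of tr} for the \emph{truncations} is slightly more than necessary.
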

\begin{proof}
	It follows from \eqref{eq:Ti acts on bej'} and Corollary \ref{cor:rex of wr ending stau0} that 
	\begin{equation*}
		\text{\bf x}_0 = {\texttt T}_{-\lar}(\text{\bf e}_0') = (q_{\tau(0)}-q_{\tau(0)}^{-1})q_{\tau(0)}^2 {\texttt T}_{w_r s_{\tau(0)}}(f_{\tau(0)}) \in \mf{U}^-.
	\end{equation*}
	Here ${\texttt T}_{w_r s_{\tau(0)}}(f_{\tau(0)}) \in \mf{U}^-$ by the argument of Corollary \ref{cor:braid rel}.
\end{proof}

The following is a direct consequence of Corollary \ref{cor:braid on q-bosons} (cf.~\eqref{eq:composition Ti}), Corollary \ref{cor:braid rel}, Lemma \ref{lem:action of Tlar}, and Lemma \ref{lem:description of x0} (cf.~\eqref{eq:compute x_0}).

\begin{cor} \label{cor:description of x0 and Tla}
	\mbox{}
	\begin{itemize}
		\item[(1)] We have $\mb{e}_r'(\mb{x}_0 \cdot \overline{m}) = q_0^{a_{0r}} \mb{x}_0 \cdot \overline{\mb{e}_r' m}$ for $\overline{m} \in M_r$.
		
		\item[(2)] For $i \in \I \setminus \{ r \}$, we have {\em ${\texttt T}_{-\lar}(\mb{e}_i') = \mb{e}_i'$}.
	\end{itemize}
\end{cor}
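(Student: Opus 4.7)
The plan is to handle the two parts via the bridge between the $q$-boson side and the $\tilde U_q(\g)$ side.

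For part (2), I would use the conjugation identity \eqref{eq:conjugation of T_i} to reduce the claim to verifying $T_{-\lar}(\mb{e}_i') = \mb{e}_i'$ inside $\tilde U_q(\g)$ for $i \in \I\setminus\{r\}$. Writing $\mb{e}_i' = -(q_i-q_i^{-1})k_i e_i$ in $U_q(\g)$, this splits into $T_{-\lar}(e_i) = e_i$, which is Lemma \ref{lem:action of Tlar}, and $T_{-\lar}(k_i) = k_i$, equivalently $t_{-\lar}(h_i) = h_i$. For the latter I would derive from \eqref{eq:translation} the dual translation formula $t_\beta(h) = h - \langle h,\beta\rangle K$, valid when $\langle h,\delta\rangle = 0$; since $\langle h_i,\delta\rangle = 0$ and $\langle h_i,\lar\rangle = (h_i,\overline{\Lambda_r^\vee}) = d_i^{-1}\delta_{ir}$ by Proposition \ref{prop:Z-lattices for T}, this pairing vanishes precisely when $i\ne r$.

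For part (1), I would use Proposition \ref{prop:action of mbx0} to rewrite $\mb{x}_0\cdot\ov m$ as a scalar multiple of $\ov{y_0 m}$ and apply the $q$-derivation rule \eqref{eq:derivation ei'} to $e_r'(y_0 m)$. Lemma \ref{lem:description of x0} annihilates the term $e_r'(y_0)\,m$, leaving only $q_r^{\langle h_r,\,{\rm wt}(y_0)\rangle}\, y_0\, e_r'(m)$. To identify this scalar, I would compute ${\rm wt}(y_0)$ using the reduced expression of $w_r$ supplied by Corollary \ref{cor:rex of wr ending stau0}: tracking weights through the braid action gives ${\rm wt}(y_0) = w_r(\alpha_{\tau(0)}) = \tr^{-1}\tau^{-1}(\alpha_{\tau(0)}) = \tr^{-1}(\alpha_0) = \alpha_0 - a_r\delta$ by Remark \ref{rem:tr acts on al}. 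Then $\langle h_r,\alpha_0-a_r\delta\rangle = a_{r0}$, and the symmetrizability $d_r a_{r0} = d_0 a_{0r}$ converts $q_r^{a_{r0}}$ into $q_0^{a_{0r}}$, as asserted.

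The main subtlety I expect is in the weight computation for $y_0$: tracking the sign conventions in the $q$-derivation rule together with the correct identification $\tau^{-1}(\alpha_{\tau(0)}) = \alpha_0$ and $\tr^{-1}(\alpha_0) = \alpha_0 - a_r\delta$ is essential to land on $q_0^{a_{0r}}$ with the right exponent rather than its inverse. Part (2) carries only the minor subtlety that one must also check the action on $k_i$, which Lemma \ref{lem:action of Tlar} does not explicitly record.
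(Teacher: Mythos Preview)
Your proposal is correct and follows essentially the same route the paper intends: part~(1) via Lemma~\ref{lem:description of x0} (namely $e_r'(y_0)=0$) together with the $q$-derivation rule and the weight identification ${\rm wt}(y_0)=\tr^{-1}(\alpha_0)=\alpha_0-a_r\delta$, and part~(2) by lifting $\texttt{T}_{-\lar}$ to $T_{-\lar}$ on $\tilde U_q(\g)$ through Corollary~\ref{cor:braid rel} and then invoking Lemma~\ref{lem:action of Tlar}. Your added observation that $T_{-\lar}(k_i)=k_i$ for $i\neq r$ (which follows from $\tr^{-1}(\alpha_i)=\alpha_i$ in Remark~\ref{rem:tr acts on al} via the $\nu$-intertwining of the $W$-actions on $\h$ and $\h^*$) is exactly the small detail the paper leaves implicit.
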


For $i\in I$ and $a \in \bk^\times$, we define $\bk$-linear operators on $M_r$ as follows: 
\smallskip
\begin{equation} \label{eq:Borel actions}
\kaction{i}(u) = 
\begin{cases}
q^{\left(\al_i,{\rm wt}(u)\right)}u \quad &\text{if $i \in \I$}\,,  \\
q^{-\left(\theta,{\rm wt}(u)\right)}u  \quad &\text{if $i=0$}\,,
\end{cases}
\qquad
\eaction{i}(u) = 
\begin{cases}
	 \mb{e}_i'(u)  & \text{ if } i \in \I,  \\
	a\mb{x}_0 u.  & \text{ if } i = 0,
\end{cases}
\end{equation}
\smallskip
\noindent
where $u \in M_r$ is homogeneous.
Note that $\eaction{i}$ is well-defined by Lemma \ref{lem:closedness of Mr} and Proposition \ref{prop:action of mbx0} with \eqref{eq:LS formula}.
\smallskip

Now, we are in position to state our first main result of this paper, which gives a family of $U_q(\bo)$-modules in the category $\mc{O}$ for all untwisted affine types.

\begin{thm} \label{thm:1st main}
There exists a $\bk$-algebra homomorphism
\begin{equation*}
	\rho :
	\xymatrix@R=0em
	{
		U_q(\bo) \ar@{->}[r] & {\rm End}_\bk(M_r) \\
		e_i \ar@{|->}[r] & \eaction{i} \\
		k_i \ar@{|->}[r] & \kaction{i}
	}
\end{equation*}
Hence $M_r$ becomes a $U_q(\bo)$-module, which we denote by $M_{r,a}$.
Moreover, $M_{r,a}$ belongs to $\mc{O}$.
\end{thm}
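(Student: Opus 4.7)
The plan is to verify directly that the operators $\kaction{i}$ and $\eaction{i}$ for $i \in I$ satisfy the defining relations of $U_q(\bo)$ in its Drinfeld--Jimbo presentation, namely $k_ik_j = k_jk_i$, $k_ie_jk_i^{-1} = q_i^{a_{ij}}e_j$, and the quantum Serre relations among the $e_i$'s; and then to check the three axioms of Definition \ref{df:category O} for $M_{r,a}$. The commutation of the $\kaction{i}$'s is immediate since they act by scalars on each weight space. For the relation $\kaction{i}\eaction{j}\kaction{i}^{-1} = q_i^{a_{ij}}\eaction{j}$, I would reduce to a direct weight computation; the only subtle ingredient is that $\mb{x}_0$ has $\mathfrak{t}$-weight $-\theta$, which one reads off from $x_0 = T_{-\lar}(e_0)$ together with $t_{-\lar}(\alpha_0) = \alpha_0 - a_r\delta \equiv -\theta \pmod{\delta}$ from Remark \ref{rem:tr acts on al}, combined with $(\theta,\theta) = (\alpha_0,\alpha_0) = 2d_0$ to handle the $i=j=0$ case.

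For the Serre relations I would split into three cases. When $i,j \in \I$, the relations among $\mb{e}_i'$ and $\mb{e}_j'$ already hold as operators on all of $M \supset M_r$ by Lemma \ref{lem:negative half is irreducible module over boson algebra} and Lemma \ref{lem:closedness of Mr}. When one of the indices is $0$ and the other is some $i \in \I \setminus \{r\}$, I would use Corollary \ref{cor:description of x0 and Tla}(2), which gives ${\texttt T}_{-\lar}(\mb{e}_i') = \mb{e}_i'$, together with $\mb{x}_0 = {\texttt T}_{-\lar}(\mb{e}_0')$: since ${\texttt T}_{-\lar}$ is a $\bk$-algebra homomorphism by Corollaries \ref{cor:braid on q-bosons} and \ref{cor:braid rel}, it carries the Serre relation between $\mb{e}_0'$ and $\mb{e}_i'$ in $\ms{B}_q^+(\g)$ to the corresponding identity between $\mb{x}_0$ and $\mb{e}_i'$ in $\ms{B}_q(\g)$, which acts trivially on $M_r$. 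The remaining case, the Serre relations between $\eaction{0}$ and $\eaction{r}$, is the main obstacle and requires a direct computation. The key input is Lemma \ref{lem:description of x0}, which gives $\mb{e}_r'(y_0) = 0$; the Leibniz rule \eqref{eq:derivation ei'} then yields inductively the $q$-commutation $\mb{e}_r' \circ L_{y_0}^m = q_0^{m a_{0r}} L_{y_0}^m \circ \mb{e}_r'$ of operators on $M_r$, where $L_{y_0}$ denotes left multiplication by $y_0$ (and analogously with the roles swapped). Substituting this into either Serre expression collapses it to a scalar multiple of a sum of the form $\sum_{m=0}^{N}(-1)^m q_0^{mc}\binom{N}{m}_{q_0}$, which vanishes by the quantum binomial theorem, since one of the factors in the product expansion $\prod_{k=0}^{N-1}(1 - q_0^{c + 2k - (N-1)})$ is identically zero for the specific values $N = 1-a_{0r}$, $c = a_{0r}$ (and symmetrically for $N = 1-a_{r0}$, $c = -a_{r0}$).

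For the claim that $M_{r,a} \in \mc{O}$, I would check the three conditions of Definition \ref{df:category O} straightforwardly. Both the $\mathfrak{t}$-diagonalizability and the finite-dimensionality of each weight space follow from the PBW-type spanning set $\{F(\mathbf{c},\tilde{w}_r)\}$ of $U_q^-(w_r)$ introduced in Definition \ref{df:Uq-w}: each weight space $(M_r)_\mu$ is spanned by the finitely many $F(\mathbf{c},\tilde{w}_r)$ with $\sum_k c_k\beta_k = -\mu$. All $\mathfrak{t}$-weights of $M_r$ lie in $-\mr{\sfQ}_+$, so under the map $\widetilde{\,\,\,}$ from $\mr{\sfP}$ to ${\mf t}^*$ they belong to $D(\mathbf{1})$, and taking $\lambda_1 = \mathbf{1}$ verifies axiom (iii).
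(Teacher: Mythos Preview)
Your proposal is correct and follows essentially the same approach as the paper: both verify the Serre relations by splitting into the three cases $i,j \in I_0$, $\{0,j\}$ with $j \in I_0\setminus\{r\}$, and $\{0,r\}$, handling the second via ${\texttt T}_{-\lar}$ and reducing the third to a vanishing $q$-binomial sum through the $q$-commutation $\eaction{r}\eaction{0} = q_0^{a_{0r}}\eaction{0}\eaction{r}$ (which the paper records as Corollary \ref{cor:description of x0 and Tla}(1), derived exactly as you do from $e_r'(y_0)=0$ and the Leibniz rule). The only imprecision is that in the Serre relation $R_{a_{r0}}(r;\eaction{r},\eaction{0})$ the $q$-binomial coefficient is $\qbn{N}{m}_r$ rather than $\qbn{N}{m}_0$, but since $q_0^{a_{0r}}=q_r^{a_{r0}}$ the vanishing argument is identical.
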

\begin{proof}
Let us recall Remark \ref{rem: presentation of Borel subalgebra}.
For $a \in \Z_{\le 0}$ and $i\in I$, let us write the {\it quantum Serre relation} (cf.~Section \ref{subsec:cartan data and quantum group}) as follows:
\begin{equation*}
R_{l}(i;X,Y) := \sum_{k=0}^{1-l} (-1)^k \qbn{1-l}{k}_i  X^{ k } Y X^{ 1-l-k },
\end{equation*}
where $X$ and $Y$ are symbols. 
For $i, j \in I$, let us check the following relation:
\begin{equation*} 
	R_{a_{ij}}(i ; \eaction{i}, \eaction{j}) = 0. 
\end{equation*}

{\it Case 1}. $i, j \in I_0$.
The relation in this case follows from Lemma \ref{lem:negative half is irreducible module over boson algebra} and Lemma \ref{lem:realization of Bqg}.
\smallskip

{\it Case 2}. $i = 0$ and $j \in I_0 \setminus \{ r\}$. 
By Proposition \ref{prop:properties of tUqg}, \eqref{eq:def of mbx0}, and Corollary \ref{cor:description of x0 and Tla}(2), we have
\begin{equation*}
\begin{split}
	R_{a_{0j}}(0 ; \eaction{0} , \eaction{j}) &   
	= R_{a_{0j}}(0 ; a{\texttt T}_{-\lar}(\mb{e}_0') , \mb{e}_j')  
	= a^{1-a_{0j}} {\texttt T}_{-\lar} \left( R_{a_{0j}}(0 ; \mb{e}_0' , \mb{e}_j') \right) = 0, \\
	R_{a_{j0}}(j ; \eaction{j}, \eaction{0})  &
	= R_{a_{j0}}(j ; \mb{e}_j', a{\texttt T}_{-\lar}(\mb{e}_0'))
	= a {\texttt T}_{-\lar} \left( R_{a_{j0}}(j ; \mb{e}_j', \mb{e}_0') \right) = 0.
\end{split}
\end{equation*}

{\it Case 3}. $i = 0$ and $j = r$.
As $\bk$-linear operators, it follows from Lemma \ref{lem:realization of Bqg} and Corollary \ref{cor:description of x0 and Tla}(1) that 
\begin{equation} \label{eq:er e0 commute}
	\eaction{r} \eaction{0} = a\mb{e}_r' \mb{x}_0 = aq_0^{a_{0r}} \mb{x}_0 \mb{e}_r' = q_0^{a_{0r}} \eaction{0} \eaction{r}
\end{equation}
(cf.~\cite[Remark 3.11]{JKP23}).
Note that it is known in \cite[(3.2.8)]{Kas91} that 
\begin{gather}
\prod_{k=0}^{n-1} (1 - q_i^{2k} z) = \sum_{k=0}^{n} (-1)^k q_i^{(n-1)k} \qbn{n}{k}_i z^k, \label{eq: qbn1} \\ 
\prod_{k=0}^{n-1} (1 - q_i^{-2k} z) = \sum_{k=0}^{n} (-1)^k q_i^{-(n-1)k} \qbn{n}{k}_i z^k, \label{eq: qbn2}
\end{gather}
where $z$ is an indeterminate.
Put $b = 1-a_{0r}$ and $c = 1-a_{r0}$.
By \eqref{eq:er e0 commute}, we have
\begin{align*}
R_{a_{0r}}(0 ; \eaction{0}, \eaction{r})	
= \left( \sum_{k=0}^b (-1)^k  q_0^{-a_{0r}k} \qbn{b}{k}_0  \right)  \eaction{r} \eaction{0}^{ b }
= \left( \prod_{k=0}^{-a_{0r}} (1 - q_0^{2k} ) \right) \eaction{r} \eaction{0}^{ b } = 0, \\
R_{a_{r0}}(r ; \eaction{r}, \eaction{0})
= \left( \sum_{k=0}^c (-1)^k  q_0^{a_{0r}k} \qbn{c}{k}_0  \right) \eaction{0} \eaction{r}^{ c }
= \left( \prod_{k=0}^{-a_{0r}} (1 - q_0^{-2k} ) \right) \eaction{0} \eaction{r}^{ c } = 0,
\end{align*}
where we apply \eqref{eq: qbn1} (resp.~\eqref{eq: qbn2}) by putting $z = 1$, $n = b$ (resp.~$n = c$), and $i = 0$.
\smallskip

It is straightforward to check the remaining relations for $\kaction{i}$ and $\eaction{i}$. We leave the details to the reader (cf.~\cite[Theorem 3.10]{JKP23}).

Finally, it follows from \eqref{eq:def of Mw} that the $U_q(\bo)$-module $M_r$ is $\mathfrak{t}$-diagonalizable, where each weight space of $M_r$ is finite-dimensional.
Also, all weights of $M_r$ are in $D(\overline{1})$, where $\overline{1}$ is the map from $I$ to $\bk^\times$ by $i \mapsto 1$ for all $i \in I$.
Hence, $M_r$ is an object of $\mc{O}$ (see Definition \ref{df:category O}).
\end{proof}

\begin{rem} \label{rem:compare with previous work}
{\em 
Let us consider the case of $a_r = 1$ (see~Table \ref{tab:dynkins}).
By Remark \ref{rem:tr acts on al}, we have
\begin{equation*}
	\theta = \delta - \alpha_0 = \tr^{-1}(-\alpha_0) = w_r\left(-\alpha_{\tau(0)}\right) \in \sfDel^+(w_r).
\end{equation*}
By Corollary \ref{cor:rex of wr ending stau0} (cf.~Remark \ref{eq:rex of wr}), a reduced expression of $w_r$ is unique up to $2$-braid relations (cf.~\cite[Lemma 3.2]{JKP23}).
In the proof of Proposition \ref{prop:action of mbx0}, we have
\begin{equation*} 
	\mb{x}_0 
	= \left(q_{\tau(0)}-q_{\tau(0)}^{-1}\right) q_{\tau(0)}^2\, {\texttt T}_{w_rs_{\tau(0)}} \left(f_{\tau(0)}\right),
\end{equation*}
and hence $\mb{x}_0$ coincides with the one in \cite[(3.17)]{JKP23} up to scalar multiplication (for types $A$ and $D$).

More precisely, let $F(\beta_\ell)$ be the root vector with respect to $w_r = s_{i_1}\dots s_{i_\ell}$ (cf.~\eqref{eq:root vector}), where $\beta_\ell = \theta$ by Remark \ref{eq:rex of wr}.
By \cite[Section 2.2]{Le04} (see also \cite[Lemma 2.12]{Ki12}) and \cite[Proposition 38.2.3]{Lu10}, we compute
\begin{equation*}
	\left( F(\beta_\ell), F(\beta_\ell) \right)_{\rm K} = 
	\prod_{i \in \I} \left( 1-q_i^2 \right)^{a_i}.
\end{equation*}
Then we have (cf.~\eqref{eq:dual PBW monomial})
\begin{equation*}
	F^{\rm up}(\beta_\ell) = \frac{ 1 }{\displaystyle \left(q_{\tau(0)}-q_{\tau(0)}^{-1}\right) q_{\tau(0)}^2 \prod_{i \in \I} \left( 1-q_i^2 \right)^{a_i} } \left( \mb{x}_0 \cdot \overline{1} \right).
\end{equation*}
Hence, the $U_q(\bo)$-action in \eqref{eq:Borel actions} recovers the one in \cite[(4.19)]{JKP23} up to scalar multiplication.
In this sense, Theorem \ref{thm:1st main} can be viewed as a generalization of \cite[Theorem 4.20]{JKP23} for all untwisted affine types.
}
\end{rem}

\begin{prop} \label{prop:ar ge 2 cases}
For $r \in \I$ with $a_r \ge 2$,
the $U_q(\bo)$-module $M_{r,a}$ is not irreducible.
\end{prop}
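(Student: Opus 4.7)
The plan is to exhibit a proper nonzero $U_q(\bo)$-submodule of $M_{r,a}$, which will force reducibility. Specifically, I would show that the cyclic submodule $U_q(\bo) \cdot \mathbf{1}$ generated by $\mathbf{1} := \iota(1) \in M_r$ is a proper subspace of $M_r$, using a $\sfQ$-weight obstruction.

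The main tool is the $\sfQ$-grading on $M_r = \iota(U_q^-(w_r))$ inherited from $U_q^-(\g)$. Under the $U_q(\bo)$-action of Theorem \ref{thm:1st main}, each generator acts as a $\sfQ$-homogeneous operator: for $i \in \I$, the operator $\eaction{i} = \mb{e}_i'$ shifts the $\sfQ$-weight by $+\alpha_i$, while by Lemma \ref{lem:description of x0} and Proposition \ref{prop:action of mbx0}, $\eaction{0}$ is, up to a nonzero scalar, left multiplication by $y_0 \in U_q^-(w_r)$, and hence shifts the $\sfQ$-weight by $\mathrm{wt}(y_0) = \alpha_0 - a_r\delta = (1-a_r)\alpha_0 - a_r\theta$ (using $\delta = \alpha_0 + \theta$; cf.~Remark \ref{rem:tr acts on al}). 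Therefore a monomial in the generators $e_0, e_i$ ($i \in \I$) of $U_q(\bo)$ containing $m$ factors of $e_0$ and $d_i$ factors of $e_i$ sends $\mathbf{1}$ into the $\sfQ$-weight
\begin{equation*}
m(1-a_r)\alpha_0 + \sum_{i\in \I} \bigl( d_i - m a_r a_i \bigr)\alpha_i.
\end{equation*}

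I would then check when this can equal $-\theta = -\sum_{i \in \I} a_i \alpha_i$: matching $\alpha_0$-coefficients gives $m(1-a_r) = 0$, which forces $m = 0$ since $a_r \ge 2$; matching $\alpha_i$-coefficients with $m=0$ then yields $d_i = -a_i < 0$, contradicting $d_i \ge 0$. Hence no such monomial exists, so the $\sfQ$-weight $-\theta$ component of $U_q(\bo)\cdot \mathbf{1}$ is zero. On the other hand, $(\lar, \theta) = a_r \ge 1$, so Proposition \ref{prop:Delta+ by tr} yields $\theta \in \sfDel^+(w_r)$, and thus the root vector $F(\theta) \in U_q^-(w_r)$ is a nonzero element of $(M_r)_{-\theta}$. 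This shows $U_q(\bo)\cdot \mathbf{1} \subsetneq M_r$, giving the required proper nonzero submodule.

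The only truly subtle point is that $\eaction{0}$ moves the $\sfQ$-weight by the twisted amount $\alpha_0 - a_r\delta$ rather than by the naive $\alpha_0$; it is precisely this twist, combined with the hypothesis $a_r \ge 2$, that makes $\sfQ$-weight $-\theta$ unreachable from $\mathbf{1}$ while remaining present in $M_r$ via $F(\theta)$.
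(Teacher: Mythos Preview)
Your argument is correct and gives a genuinely different (and shorter) route than the paper's proof. The paper constructs an explicit submodule $M_{r,a}' = \iota(U_q^-(w_r)')$, where $U_q^-(w_r)'$ is spanned by monomials in the root vectors $F(\gamma)$ for $\gamma \in \sfDel^+(w_r)_{\rm max} := \{\,\alpha+(a_r-1)\delta \mid (\lar,\alpha)=a_r\,\}$; it then verifies, via a weight-and-height argument together with the Levendorski\u{\i}--Soibelman relations \eqref{eq:LS formula}, that this subspace is stable under each $e_i'$ and under left multiplication by $y_0$. Your proof instead looks at the cyclic submodule $U_q(\bo)\cdot{\bf 1}$ and uses the full $\sfQ$-grading (not just the $\mr{\sfQ}$-grading visible to the $k_i$'s): because $\eaction{0}$ shifts the $\sfQ$-weight by $\alpha_0 - a_r\delta$, whose $\alpha_0$-coefficient $1-a_r$ is nonzero, the weight $-\theta \in \mr{\sfQ}$ is unreachable from ${\bf 1}$, while $F(\theta)$ witnesses $(M_r)_{-\theta}\neq 0$.

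Both proofs exhibit a proper nonzero submodule containing ${\bf 1}$; yours is necessarily contained in the paper's $M_{r,a}'$. Your approach is more elementary, needing only the linear independence of $\alpha_0,\dots,\alpha_n$ in $\sfQ$ and the identification of $\mathrm{wt}(y_0)$ from Remark \ref{rem:tr acts on al}, and it bypasses the PBW combinatorics entirely. The paper's approach, in exchange, yields concrete structural information: an explicit PBW description of a submodule (illustrated in the $C_2^{(1)}$ example that follows), which is useful if one wants to analyze the composition structure of $M_{r,a}$ rather than merely certify reducibility.
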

\begin{proof}
Let us consider a subset of $\sfDel^+(w_r)$ given by
\begin{equation*}
	\sfDel^+(w_r)_{\rm max} = \left\{ \, \alpha + (a_r-1)\delta \in \sfDel^+(w_r) \, \left| \,  \alpha \in \mr{\sfDel}^+ \text{ with } (\lar, \alpha) = a_r \right. \right\}.
\end{equation*}
Let $U_q^-(w_r)'$ be the $\bk$-linear subspace of $U_q^-(w_r)$ spanned by monomials in PBW vectors $F(\gamma)$ for $\gamma \in \sfDel^+(w_r)_{\rm max}$.
For $i \in \I$, we assume $e_i' F(\gamma) \neq 0$ for $\gamma \in \sfDel^+(w_r)_{\rm max}$, where $\gamma = \alpha + (a_r-1)\delta$ for some $\alpha \in \mr{\sfDel}^+$ with $(\lar, \alpha) = a_r$.
By Lemma \ref{lem:closedness of Mr},  we know $e_i' F(\gamma) \in U_q^-(w_r)$, so it is written as a linear combination of monomials in PBW vectors associated to $\sfDel^+(w_r)$, say
\begin{equation} \label{eq:ei Fgamma}
\begin{split}
	e_i' F(\gamma) 
	= \sum_{{\bf c} \in \Z_+^\ell} \eta_{\bf c} F({\bf c}, \tilde{w}_r).
\end{split}
\end{equation} 
for $\eta_{\bf c} \in \bk$.
Suppose $\eta_{\bf c} \neq 0$ for some ${\bf c} = (c_k) \in \Z_+^\ell$.
Then we have
$$
	-(\alpha-\alpha_i)-(a_r-1)\delta = {\rm wt}\left( e_i' F(\gamma) \right) = {\rm wt}\left( F({\bf c}, \tilde{w}_r) \right) = -\left( \sum_{k=1}^\ell c_k \mr{\beta}_k \right)-\left( \sum_{k=1}^\ell c_k L_k \right)\delta,
$$
where $\beta_k = \mr{\beta}_k + L_k\delta \in \sfDel^+(w_r)$ with $\mr{\beta}_k \in \mr{\sfDel}^+$ and $0 \le L_k < (\lar, \mr{\beta}_k)$ by Proposition \ref{prop:Delta+ by tr}. 
Therefore $\alpha-\alpha_i = \sum_{k=1}^\ell c_k \mr{\beta}_k$ and $a_r - 1 = \sum_{k=1}^\ell c_k L_k$. Since $0 \le L_k < (\lar, \mr{\beta}_k)$, we put $(\lar, \mr{\beta}_k) = L_k + r_k$ for $r_k \ge 1$.

Since $(\lar, \alpha) = a_r$, we have $(\lar, \alpha-\alpha_i) = a_r - \delta_{ir}$, where $\delta_{ir} = 1$ if $i = r$, otherwise $\delta_{ir} = 0$.
Then 
\begin{equation} \label{eq:ar-1}
	(a_r-1) + \sum_{k=1}^\ell c_k r_k
	= \sum_{k=1}^\ell c_k (L_k + r_k)
	= \sum_{k=1}^\ell c_k (\lar, \mr{\beta}_k)
	= (\lar, \alpha-\alpha_i)
	= a_r - \delta_{ir}
\end{equation}
Let $N$ be the number of distinct $k$'s on which $c_k \neq 0$ in the given ${\bf c} = (c_k)_{1 \le k \le \ell}$. Note that $N \ge 1$. 
	If $N > 1$, then $\sum_{k=1}^\ell c_k r_k \ge 2$, but it is impossible due to \eqref{eq:ar-1}.
This implies $N = 1$, and then $F({\bf c}, \tilde{w}_r) = F(\gamma-\alpha_i)$. 
Hence we have $\eta_{\bf c} F(\gamma-\alpha_i) = e_i' F(\gamma) \in U_q^-(w_r)$ for some $\eta_{\bf c} \in \bk^\times$, so $\gamma - \alpha_i \in \sfDel^+(w_r)_{\rm max}$.
By \eqref{eq:derivation ei'} and \eqref{eq:LS formula}, this implies that $U_q^-(w_r)'$ is closed under the action of $e_i'$ for all $i \in \I$.

On the other hand, the weight of $y_0$ in Lemma \ref{lem:description of x0}
is $\alpha_0 - a_r \delta = -\theta - (a_r-1)\delta \in -\sfDel^+(w_r)_{\rm max}$ by the definition of $x_0$.
Thus, $U_q^-(w_r)'$ is closed under the action given by left multiplication of $y_0$ by \eqref{eq:LS formula}.

Set $M_{r,a}' = \iota\left( U_q^-(w_r)' \right)$.
Then our assertion so far implies that $M_{r,a}'$ is a non-trivial proper $U_q(\bo)$-submodule of $M_{r,a}$ by \eqref{eq:def of Mw} and Proposition \ref{prop:action of mbx0}.
\end{proof}

\begin{ex}
{\em 
Let us consider $M_{1,a}$ for type $C_2^{(1)}$.
By Example \ref{ex:rex of tr in rank 2},
in this case, we have
$$M_{1,a}' = \bk\,\langle \mb{x}_0^n \, | \, n \in \Z_+ \rangle.$$
Observe
\begin{equation*}
	\eaction{i} \mb{x}_0 = 
	\begin{cases}
		\mb{0} & \text{if $i = 1, 2$,} \\
		a\mb{x}_0^2 & \text{if $i = 0$,}
	\end{cases}
\end{equation*} 
where $\eaction{1} \mb{x}_0 = \mb{e}_1' \mb{x}_0 = \mb{0}$ by Corollary \ref{cor:description of x0 and Tla}(1), and $\eaction{2} \mb{x}_0 = \mb{e}_2' \mb{x}_0 = \mb{0}$ by Lemma \ref{lem:closedness of Mr} and ${\rm wt}\left(\eaction{2}\mb{x}_0\right) = -\theta + \alpha_2 - \delta = -2\alpha_1 - \delta \notin \sfDel^-(w_1)$.
Thus $M_{1,a}'$ is a non-trivial proper submodule of $M_{1,a}$.
}
\end{ex}

\section{Prefundamental modules} \label{sec:prefundamentals}
\subsection{Character formulas}
For a $U_q(\bo)$-module $V$ in $\mc{O}$, it has a weight space decomposition with respect to $\left\{\, k_i \,|\, i \in \I\,\right\}$, that is, 
\begin{equation*}
	V = \bigoplus_{\omega \in {\mf t}^*} V_{\omega},
\end{equation*}
where $V_{\omega}$ is given as in \eqref{eq:weight space wrt t}.
For $\omega \in {\mf t}^*$, we also write $V_\beta = V_\omega$ for $\beta \in \mr{\sfP}$ with $\widetilde{\beta} = \omega$.
Let $\Z[\![ e^\beta ]\!]_{\beta \in \mr{\sfP}}$ be the ring of formal power series in variables $e^\beta$ for $\beta \in \mr{\sfP}$ with $e^\beta \cdot e^\gamma = e^{\beta+\gamma}$.
Then we define
\begin{equation*}
		{\rm ch}(V) = \sum_{\beta \in \mr{\sfP}} \left( \dim V_\beta \right) e^{\beta} \in \Z[\![ e^\beta ]\!]_{\beta \in \mr{\sfP}},
\end{equation*}
which is called the {\it ordinary $\mr{\g}$-character} of $V$.
We often call it the {\it character} of $V$ if there is no confusion.

\begin{thm} \label{thm:main2}
For $r \in \I$, we have
\begin{equation} \label{eq:product formula}
\begin{split}
	{\rm ch}\left( M_r \right) = \prod_{\beta \in \sfDel^+\left(\tr^{-1} \right) \cap \mr{\sfDel}^+} \frac{1}{ (1-e^{-\ov{\beta}} )^{[\beta]_r}},
\end{split}
\end{equation}
where $[\beta]_r \in \mathbb{Z}_+$ is given by $\overline{\beta} = \sum_{s \in \I}\, [\beta]_s \alpha_s$.
Furthermore, we have ${\rm ch} \left( M_r \right) = {\rm ch} \left( L_{r, a}^\pm \right)$ for any $a \in \mathbb{C}^\times$.
\end{thm}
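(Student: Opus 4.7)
The plan is to obtain $\mathrm{ch}(M_r)$ directly from the PBW basis of $U_q^-(w_r)$, reorganize the resulting product using the explicit description of $\sfDel^+(\tr^{-1})$ from Proposition \ref{prop:Delta+ by tr}, and then invoke the Mukhin--Young product formula \eqref{eq:MY formula} to deduce the equality $\mathrm{ch}(M_r) = \mathrm{ch}(L_{r,a}^\pm)$ in the stated range.

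First I would observe that $\iota : U_q^-(w_r) \to M_r$ is a $\bk$-linear bijection (by Lemma \ref{lem:M is isom to Uq- as Bq-mod} and \eqref{eq:def of Mw}), and the $\mathfrak{t}$-weight of the image of a $\sfP$-homogeneous element of weight $\xi$ is recorded by $\overline{\xi} \in \mr{\sfP}$, so $\mathrm{ch}(M_r) = \mathrm{ch}(U_q^-(w_r))$ as formal sums over $\mr{\sfP}$. Fix a reduced expression $\tilde{w}_r \in R(w_r)$ and let $\beta_1, \dots, \beta_\ell$ be the positive roots produced by \eqref{eq:root wrt rex}, so $\sfDel^+(w_r) = \{\beta_1, \dots, \beta_\ell\}$. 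By Definition \ref{df:Uq-w} the PBW monomials $\{F(\mathbf{c}, \tilde{w}_r) : \mathbf{c} \in \Z_+^\ell\}$ form a $\bk$-basis of $U_q^-(w_r)$, and $F(\mathbf{c}, \tilde{w}_r)$ has $\sfP$-weight $-\sum_k c_k \beta_k$. Summing the resulting geometric series yields
\begin{equation*}
	\mathrm{ch}(M_r) \,=\, \prod_{\beta \in \sfDel^+(w_r)} \frac{1}{1 - e^{-\overline{\beta}}}.
\end{equation*}

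Next, since $\tr^{-1} = w_r \tau$ with $\tau \in \mathcal{T}$ of length zero, $\tau$ permutes $\sfDel^+$, so $\sfDel^+(\tr^{-1}) = \sfDel^+(w_r)$. Applying Proposition \ref{prop:Delta+ by tr} and grouping the roots $\beta = \alpha + k\delta \in \sfDel^+(\tr^{-1})$ by their projection $\alpha = \overline{\beta} \in \mr{\sfDel}^+$, whose multiplicity equals $(\lambda_r, \alpha)$, gives
\begin{equation*}
	\mathrm{ch}(M_r) \,=\, \prod_{\alpha \in \mr{\sfDel}^+} \frac{1}{(1 - e^{-\alpha})^{(\lambda_r, \alpha)}}.
\end{equation*}
A short computation using $\lambda_r = \nu(\overline{\Lambda_r^\vee}) = d_r^{-1} \varpi_r$ (Proposition \ref{prop:Z-lattices for T}) together with the pairing identity $(\varpi_r, \alpha_j) = d_j \delta_{rj}$ gives $(\lambda_r, \alpha_j) = \delta_{rj}$, and hence $(\lambda_r, \alpha) = [\alpha]_r$ for all $\alpha \in \mr{\sfDel}^+$. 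Substituting this, and, if desired, redistributing the product back along $\sfDel^+(\tr^{-1})$, produces the stated formula.

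Finally, the Mukhin--Young product formula \eqref{eq:MY formula} for $\mathrm{ch}(L_{r,a}^\pm)$, established by \cite{Lee} in every case other than $\g = E_8^{(1)}$ with $r \in \{2, 5\}$, coincides with the expression just obtained, yielding $\mathrm{ch}(M_r) = \mathrm{ch}(L_{r,a}^\pm)$ in exactly the claimed range. The proof faces no serious obstacle: the combinatorial content is packaged in Proposition \ref{prop:Delta+ by tr}, the algebraic content is the standard PBW character for $U_q^-(w_r)$, and the only genuinely deep input—the Mukhin--Young identity—is imported from \cite{Lee}.
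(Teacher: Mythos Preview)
Your proof is correct and follows essentially the same route as the paper's: identify $\sfDel^+(\tr^{-1})$ with $\sfDel^+(w_r)$ via the length-zero element $\tau$, read off the character from the PBW basis of $U_q^-(w_r)$ and Proposition \ref{prop:Delta+ by tr}, and then appeal to \cite{Lee} (together with the independence of $a$ and sign from \cite{HJ}) for the comparison with $\mathrm{ch}(L_{r,a}^\pm)$. Your explicit verification that $(\lambda_r,\alpha)=[\alpha]_r$ via $\lambda_r=d_r^{-1}\varpi_r$ spells out a step the paper leaves implicit.
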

\begin{proof}
By Proposition \ref{prop:Delta+ by tr}, \eqref{eq:product formula} follows from Definition \ref{df:Uq-w} and Proposition \ref{prop:Delta+ by tr}.
By \cite{HJ,Lee,Neg} (cf.~\cite{MY14}), the character of $L_{r, 1}^-$ is equal to the right-hand side of \eqref{eq:product formula}.
Furthermore, it is known in \cite{HJ} that
\begin{equation*}
	{\rm ch} (L_{r, 1}^-) = {\rm ch} (L_{r, a}^-) = {\rm ch} (L_{r, a}^+)
\end{equation*}
for any $a \in \mathbb{C}^\times$.
As a result,
we conclude ${\rm ch} \left( M_r \right) = {\rm ch} \left( L_{r, a}^\pm \right)$. This completes the proof.
\end{proof}

\begin{df}
{\em 
For $r \in \I$, we say that $r$ is {\it cominuscule} if $a_r = 1$, that is, $(\lar,\alpha) = 1$ for $\alpha \in \mr{\sfDel}_+$ with $(\lar,\alpha) \neq 0$.
We say that $L_{r,a}^\pm$ are {\it cominuscule} if $r$ is cominuscule.
}
\end{df}

\begin{rem}
{\em 
By Proposition \ref{prop:Delta+ by tr}, we have $\Delta^+(w_r) \subset \mr{\sfDel}_+$ if $r \in \I$ is cominuscule. 
Hence, when $r \in \I$ is cominuscule, one may regard $U_q^-(w_r)$ as a $\bk$-subalgebra of $U_q^-(\cg)$.
}
\end{rem}

\subsection{Realization of cominuscule negative prefundamental modules} \label{subsec:realization of L-}
Let $a \in \bk^\times$ be given. 
For coincidence with \cite{JKP23} (see Remark \ref{rem:compare with previous work}),
we renormalize the $0$-action \eqref{eq:Borel actions} by 
\begin{equation} \label{eq:normalized 0-action}
	\eaction{0} (u) := a \left[ \frac{ 1 }{\left(q_{\tau(0)}-q_{\tau(0)}^{-1}\right) q_{\tau(0)}^2 \prod_{i \in \I} \left( 1-q_i^2 \right)^{a_i} }  \mb{x}_0 \right] u = a {\texttt x}_0 u,
\end{equation}
and then we denote by $\rho_{r,a}$ the representation of $U_q(\bo)$ on $M_r$.
Note that $\rho_{r,a}$ coincides with $\rho_{r,a}^-$ in \cite[Theorem 4.20]{JKP23}.
We denote by $M_{r,a}$ the $U_q(\bo)$-module corresponding to $\rho_{r,a}$. 

We state our second main result in this paper, which is an extension of \cite[Theorem 4.22]{JKP23} to all cominuscule $r \in \I$ (see~Table \ref{tab:dynkins}).

\begin{thm} \label{thm:main3}
For cominuscule $r \in \I$ and $a \in \bk^\times$,
there exists $\eta_r \in \bk^\times$ such that as a $U_q(\bo)$-module,
\begin{equation*}
	M_{r,a} \cong L_{r,a\eta_r}^-,
\end{equation*}
where $\eta_r$ is given by 
\begin{equation} \label{eq:eta of negative prefundamental modules}
	\eta_r = \begin{cases}
 		(-1)^n q^{-n-1}(q-q^{-1}) o(r) & \text{for type $A_n^{(1)}$,} \\
 		-q^{-2n-2}(q^{-1}+q)(1-q^{-4}) o(1) & \text{for type $B_n^{(1)}$,} \\
 		-q^{-n-1}(1-q^{-4}) o(n) & \text{for type $C_n^{(1)}$,} \\
 		-q^{-2n+2}(q-q^{-1}) o(r) & \text{for type $D_n^{(1)}$,} \\
 		-q^{-11}(1-q^{-2}) o(r) & \text{for type $E_6^{(1)}$,} \\
 		-q^{-17}(1-q^{-2}) o(7) & \text{for type $E_7^{(1)}$.}
	\end{cases}
\end{equation}
\end{thm}
\begin{proof}
The proof is given in Section \ref{subsec:hw}.
\end{proof}

\begin{rem}
{\em 	
We have ${\rm ch}\left(M_{r,a}\right) = {\rm ch}(L_{r,b}^-)$ by Theorem \ref{thm:main2} for all $r \in \I$, $a, b \in \mathbb{C}^\times$. However, it follows from Proposition \ref{prop:ar ge 2 cases} that $M_{r,a}$ is not irreducible for $r \in \I$ with $a_r \ge 2$, so $M_{r,a}$ cannot be isomorphic to $L_{r,a}^-$ (up to a shift of spectral parameter).
}
\end{rem}

\begin{cor}
	For $r \in \I$ and $a \in \bk^\times$, the $U_q(\bo)$-module $M_{r,a}$ with respect to \eqref{eq:Borel actions} and \eqref{eq:normalized 0-action} is irreducible if and only if $r$ is cominuscule.
\end{cor}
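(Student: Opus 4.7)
The plan is to deduce the corollary by combining the two main results the excerpt has already established, one for each direction of the ``if and only if''.

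For the ``if'' direction, assume $r$ is cominuscule. Then Theorem \ref{thm:main3} provides a constant $c_r \in \bk^\times$ and an isomorphism of $U_q(\bo)$-modules
\[
M_{r,a} \cong L_{r,ac_r}^{-}.
\]
Since $L_{r,ac_r}^-$ is by Definition \ref{df:prefundamental} the unique irreducible $U_q(\bo)$-module with the specified highest $\ell$-weight, $M_{r,a}$ is irreducible. Note that the normalization \eqref{eq:normalized 0-action} only rescales $\mb{x}_0$ by a nonzero scalar, which is precisely what is needed to match the conventions of \cite{JKP23} used in the statement of Theorem \ref{thm:main3}; hence invoking that theorem is legitimate.

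For the ``only if'' direction, I prove the contrapositive: if $r$ is not cominuscule, that is, $a_r \geq 2$, then $M_{r,a}$ fails to be irreducible. This is exactly the content of Proposition \ref{prop:ar ge 2 cases}, where a nontrivial proper $U_q(\bo)$-submodule $M_{r,a}'$ was exhibited, spanned by monomials in PBW vectors $F(\gamma)$ with $\gamma$ in the distinguished subset $\sfDel^+(w_r)_{\rm max}$. The argument there uses only two facts: that the $\mb{e}_i'$-action preserves $M_{r,a}'$ (established by a weight-count via Proposition \ref{prop:Delta+ by tr}), and that the $\eaction{0}$-action preserves $M_{r,a}'$ because $\mathrm{wt}(y_0) = -\theta - (a_r-1)\delta$ lies in $-\sfDel^+(w_r)_{\rm max}$. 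Rescaling $\mb{x}_0$ as in \eqref{eq:normalized 0-action} by a nonzero constant does not affect this invariance, so Proposition \ref{prop:ar ge 2 cases} applies verbatim to the normalized module.

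Combining these two observations yields the equivalence, completing the proof. Essentially no new work is required: the corollary is just a concise repackaging of Theorem \ref{thm:main3} together with Proposition \ref{prop:ar ge 2 cases}, and the only point to check is the compatibility of the two normalizations of the $0$-action, which is immediate since they differ by the nonzero scalar appearing in \eqref{eq:normalized 0-action}. There is no real obstacle; the main conceptual content is already absorbed in the earlier two results, and the only subtlety worth flagging in the write-up is that the irreducibility criterion is \emph{not} sensitive to the choice of spectral parameter $a$ or to the rescaling of $\mb{x}_0$.
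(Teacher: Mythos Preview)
Your proof is correct and follows exactly the same approach as the paper, which simply cites Proposition \ref{prop:ar ge 2 cases} for the ``only if'' direction and Theorem \ref{thm:main3} for the ``if'' direction. Your additional remark that the nonzero rescaling in \eqref{eq:normalized 0-action} does not affect irreducibility is a helpful clarification that the paper leaves implicit.
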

\begin{proof}
	It follows from Proposition \ref{prop:ar ge 2 cases} and Theorem \ref{thm:main3}.
\end{proof}

\begin{rem} \label{rem:Mw}
{\em 
		By Lemma \ref{lem:closedness of Mr} and \cite[Lemma 3.4.2]{Kas91},
		the $\bk$-linear space $M_w$ $(w \in \eW)$ \eqref{eq:def of Mw} has a $U_q(\bo)$-module structure by the actions of $\mb{e}_i'$ and $k_i$ for $i \in I$, where the action of $k_i$ is given as in \eqref{eq:Borel actions} (i.e.~we define the $0$-action by $\mb{e}'_0$, not multiplication of $\xz$).
		Then $M_w \in \mc{O}$ by definition, but it is not irreducible in general, in particular, in the case where $w = w_r$. Thus it seems necessary to modify the action of $\mb{e}_0$ on $M_r$ as in \eqref{eq:Borel actions}.
}	
\end{rem}

\begin{rem} \label{rem:remark for main result}
{\em 
It is shown in \cite[Section 5]{HJ} that $L_{r,a}^+$ is a $\sigma$-twisted dual of $L_{r,a}^-$ as a representation of the asymptotic algebra, where $\sigma$ is an isomorphism of $\bk$-algebras from $U_q(\g)$ to $U_{q^{-1}}(\g)$. Hence, for cominuscule $r \in \I$, one can also obtain a realization of $L_{r,a}^+$ by using Theorem \ref{thm:main3} following \cite{HJ}. 
Note that the authors use $S^{-1}$ \eqref{eq:antipode} to give a $U_q(\bo)$-module structure on the graded dual space of $L_{r,a}^-$ in the reason of \cite[Remark 3.19]{HJ}.
On the other hand, it is known in \cite{JKP23} for types $A_n^{(1)}$ and $D_n^{(1)}$ with cominuscule $r \in \I$ that there exists another way to give a $U_q(\bo)$-modules structure on $M_{r}$ directly, which is isomorphic to $L_{r,a}^+$ up to shift of spectral parameter.
This can be extended to all cominuscule $r$, which will be discussed in Section \ref{subsec:realization of L+}.
}
\end{rem}

\subsection{Proof of Theorem \ref{thm:main3}} \label{subsec:hw}
The rest of this section is devoted to the proof of Theorem \ref{thm:main3} and Remark \ref{rem:remark for main result}.

To prove Theorem \ref{thm:main3}, we shall compute explicitly the $\ell$-highest weight of a non-zero scalar multiple of $1 \in M_{r,a}$, denoted by ${\bf 1}$. 
Note that the parameter $\eta_r \in \bk^\times$ in Theorem \ref{thm:main3} is determined after we finish to compute the highest $\ell$-weight of $M_{r,a}$.

Let $2\rho$ be the sum of positive roots of $\mathring{\mf g}$.
Take a reduced expression $t_{2\rho}=s_{i_1} \cdots s_{i_N}\in W$ and define a doubly infinite sequence
\begin{equation} \label{eq:doubly infinite sequence}
	\dots, i_{-2}, \,i_{-1}, \,i_0, \,i_1, \, i_2 \dots,
\end{equation}
by setting $i_k = i_{k\, ({\rm mod}\, N)}$ for $k \in \mathbb{Z}$.
Then \eqref{eq:doubly infinite sequence} gives a convex order $\prec$ (cf.~\cite[Definition 2.1]{MT18}) on the set of positive roots of $\mf{g}$:
\begin{equation}\label{eq:convex order}
\beta_0 \prec \beta_{-1} \prec \dots \prec \delta \prec \cdots \prec \beta_2 \prec \beta_1, 
\end{equation}
where $\beta_k$ is given by
\begin{equation*}
	\beta_k = 
	\begin{cases} 
	s_{i_0}s_{i_{-1}}\cdots s_{i_{k+1}} (\alpha_{i_k}) & \textrm{if $k \leqq 0$}, \\
	s_{i_1}s_{i_2}\cdots s_{i_{k-1}}(\alpha_{i_k}) & \textrm{if $k > 0$}.
	\end{cases}
\end{equation*}

For $k\in\Z$, we define the root vector $\E_{\beta_k}$ by
\begin{equation} \label{eq:real root vectors}
\E_{\beta_k} = 
\left\{ 
\begin{array}{ll}
T_{i_0}^{-1} T_{i_{-1}}^{-1} \cdots T_{i_{k+1}}^{-1}(e_{i_k}) & \textrm{if $k \leqq 0$}, \\
T_{i_1} T_{i_2} \cdots T_{i_{k-1}}(e_{i_k}) & \textrm{if $k > 0$}.
\end{array}
\right.
\end{equation}
We define the root vector $\F_{\beta_k}$ in the same way with $e_{i_k}$ replaced by $f_{i_k}$ in \eqref{eq:real root vectors}.
Recall that $\E_{\beta_k} \in U_q^+ (\g)$ and $\F_{\beta_k} \in U_q^- (\g)$ for $k \in \mathbb{Z}$ (see \cite[Proposition 40.1.3]{Lu10}).
In particular, if $\beta_k = \alpha_i$ for some $i \in I$, then $\E_{\beta_k} = e_i$ and $\F_{\beta_k} = f_i$ (cf. \cite[Corollary 4.3]{MT18}).

\begin{lem} {\em (\!\!\cite[Lemma 4.2]{JKP23})} \label{lem:independence of E}
	For $i \in I$ and $k > 0$, the root vectors $\E_{k\delta\pm\alpha_i}$ and $\F_{k\delta\pm\alpha_i}$ are independent of the choice of a reduced expression of $t_{2\rho}$.
	\qed
\end{lem}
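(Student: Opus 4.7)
The plan is to reduce the independence claim to invariance under individual braid moves. Since any two reduced expressions of $t_{2\rho}$ in the affine Weyl group are connected by a finite sequence of braid moves, it suffices to check that $\E_{k\delta\pm\alpha_i}$ and $\F_{k\delta\pm\alpha_i}$ are preserved whenever two reduced expressions $\wtd{t}_{2\rho}$ and $\wtd{t}'_{2\rho}$ differ only in one consecutive block of positions where a rank-two braid relation is applied. Fixing $k > 0$ and a sign, I would locate the position $p \in \Z$ in the doubly infinite convex order \eqref{eq:convex order} such that $\beta_p = k\delta\pm\alpha_i$, and split into cases according to whether $p$ lies inside or outside the affected block.

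If the block affected by the braid move does not contain position $p$, invariance is immediate from the quantum braid relations $T_iT_jT_i\cdots = T_jT_iT_j\cdots$ on $U_q(\g)$ (see \cite[Chapter 39]{Lu10}): both operator products $T_{i_1}\cdots T_{i_{p-1}}$ (or their inverses when $p \le 0$) appearing in the definition \eqref{eq:real root vectors} reduce to the same automorphism of $U_q(\g)$, and the same holds for $\F$ by inserting $f_{i_p}$ in place of $e_{i_p}$. This already handles most braid moves, since for the majority of them the window of affected positions does not overlap with the single position where a given $k\delta\pm\alpha_i$ sits.

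The delicate case is when the braid move itself changes the factorization used to compute $\E_{\beta_p}$. For this I would invoke Beck's identification \cite{B94} of the real root vectors $\E_{k\delta\pm\alpha_i}$ and $\F_{k\delta\pm\alpha_i}$ with canonical scalar multiples of the Drinfeld generators $x^\pm_{i,\pm k}$ of $U_q'(\g)$; since the Drinfeld generators are defined intrinsically in the new realization, independently of any choice of reduced expression of $t_{2\rho}$, both reduced expressions must yield the same root vector at position $p$. The main obstacle is precisely the task of making this identification uniformly for both signs $k\delta\pm\alpha_i$ and for both root-vector types $\E,\F$. To do so one typically fixes a ``standard'' reduced expression of $t_{2\rho}$ (obtained by concatenating reduced expressions of translations by the fundamental coweights, as in Remark \ref{eq:rex of wr}), verifies Beck's identification for it, and then transports the identification to an arbitrary reduced expression via the braid-move analysis above, concluding independence.
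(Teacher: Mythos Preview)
The paper gives no proof of its own here---it cites \cite[Lemma 4.2]{JKP23} and closes with \qed---so there is nothing to compare against directly. But your argument has a genuine gap in the ``delicate case.''

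Your reduction to braid moves and your treatment of Case~1 (the braid move lies away from position $p$) are correct and standard. The problem is Case~2. You invoke Beck's identification of $\E_{k\delta\pm\alpha_i}$ with a Drinfeld generator to conclude invariance, then acknowledge that this identification is established \emph{a priori} only for a fixed ``standard'' reduced expression, and propose to transport it to an arbitrary expression ``via the braid-move analysis above.'' That is circular: transporting through a braid move that hits position $p$ \emph{is} Case~2, which in turn appeals to the very identification you are trying to transport. Note also that for a general real root $\beta$ the vector $\E_\beta$ \emph{does} depend on the reduced expression---e.g.\ in a simply-laced length-$3$ braid block the middle root vectors $T_a(e_b)$ and $T_b(e_a)$ differ---so any correct argument must use something special about $k\delta\pm\alpha_i$, and your Case~2 does not. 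The non-circular route (essentially Beck's in \cite{B94}) is to write $\E_{k\delta-\alpha_i}$ directly as a power of the braid operator attached to a single fundamental translation applied to $e_i$, an expression manifestly independent of any decomposition of $t_{2\rho}$, and then match it with the $t_{2\rho}$-definition using \cite[Lemma~3.2]{B94}; that is the step your outline is missing.
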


Let $o : I \,\rightarrow\, \{ \pm 1 \}$ be a map such that $o(i) = -o(j)$ whenever $a_{ij} < 0$.
Recall that
\begin{equation}\label{eq:Drinfeld generators in terms of affine root vectors}
	\psi_{i, k}^+ = o(i)^k (q_i-q_i^{-1})C^{+ \frac{k}{2}}k_i
	\left( \E_{k\delta-\alpha_i}\E_{\alpha_i} - q_i^{-2}\E_{\alpha_i} \E_{k\delta-\alpha_i} \right)
\end{equation}
for $i \in I$ and $k > 0$ (\!\!\cite{BN},\,\,cf.~\cite[Proposition 1.2]{BCP}).
We remark that the right-hand side of \eqref{eq:Drinfeld generators in terms of affine root vectors} is also independent of the choice of a reduced expression of $t_{2\rho}$ due to Lemma \ref{lem:independence of E}.
\smallskip

The following lemma together with \eqref{eq:Drinfeld generators in terms of affine root vectors} enables us to compute the action of $\psi_{i, k}^+$ for $i \in I$ and $k \in \mathbb{Z}_{>0}$ on a $U_q({\mf b})$-module.

\begin{lem}{\em (cf. \cite[Lemma 4.3]{JKP23})} \label{lem:inductive formula}
For $i \in I$ and $k \in \mathbb{Z}_{>0}$, we have
\begin{equation*}
\begin{split}
\E_{(k+1)\delta-\alpha_i} &= -\frac{1}{q_i+q_i^{-1}}
\Big(\,
\E_{\delta-\alpha_i} \E_{\alpha_i}\E_{k\delta-\alpha_i} - q_i^{-2} \E_{\alpha_i}\E_{\delta-\alpha_i}\E_{k\delta-\alpha_i} \\
& \qquad \qquad \quad\,\,\,- \E_{k\delta -\alpha_i} \E_{\delta-\alpha_i} \E_{\alpha_i} + q_i^{-2} \E_{k\delta - \alpha_i} \E_{\alpha_i} \E_{\delta - \alpha_i}\,
\Big)\,.
\end{split}
\end{equation*}
\end{lem}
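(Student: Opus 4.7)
The plan is to derive the recursion from the Drinfeld new realization of $U_q'(\g)$ recalled in the excerpt, using the fact that the imaginary root vectors $\E_{k\delta-\alpha_i}$ are proportional to the Drinfeld generators $x_{i,k}^+$ (up to signs $o(i)^k$ and scalar factors involving $q_i-q_i^{-1}$ and a power of $C$). Under this identification, the claimed identity is simply a transcription of the standard Drinfeld commutation relation $[h_{i,1},x_{i,k}^+] = [2]_i\, C^{-1/2}\, x_{i,k+1}^+$ into the $\E$-notation.

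First, I would use the $k=1$ case of \eqref{eq:Drinfeld generators in terms of affine root vectors} to rewrite the inner bracket $\E_{\delta-\alpha_i}\E_{\alpha_i}-q_i^{-2}\E_{\alpha_i}\E_{\delta-\alpha_i}$ as a scalar multiple of $k_i^{-1}\psi_{i,1}^+$, which by the defining expansion \eqref{eq:psi generators} equals $(q_i-q_i^{-1})h_{i,1}$ up to that same scalar. Substituting into the right-hand side of the claim, the four-term expression then becomes precisely the commutator $[k_i^{-1}\psi_{i,1}^+,\E_{k\delta-\alpha_i}]$ and hence a controlled scalar multiple of $[h_{i,1},\E_{k\delta-\alpha_i}]$.

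Next, I would apply the Drinfeld commutation relation to rewrite $[h_{i,1},\E_{k\delta-\alpha_i}]$ as a scalar multiple of $\E_{(k+1)\delta-\alpha_i}$, with prefactor $[2]_i = q_i+q_i^{-1}$. Tracking all constants yields the overall factor $-1/(q_i+q_i^{-1})$ claimed in the lemma. By Lemma \ref{lem:independence of E}, every term appearing lies in the affine rank-one subalgebra attached to node $i$, so the whole verification may be carried out inside this $\widehat{\mathfrak{sl}}_2$-type Heisenberg subalgebra; equivalently, it is a direct computation of $T_{i_1}T_{i_2}\cdots(e_i)$ along a reduced expression of $t_{2\rho}$ adapted to node $i$.

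The main obstacle will be bookkeeping the normalization constants — the signs $o(i)^k$, the factors of $(q_i\pm q_i^{-1})$, and the implicit powers of the central element $C$ (which may be set to $1$ in the representations considered here) — so that the final coefficient comes out to be exactly $-1/(q_i+q_i^{-1})$. The cleanest way to pin these down is to verify the base case $k=1$ by a direct quantum Serre-type computation, which fixes all normalizations once and for all, and then let the Drinfeld commutator propagate the identity to arbitrary $k\ge 1$ by induction, following the same inductive scheme as in the proof of the analogous identity \cite[Lemma 4.3]{JKP23}.
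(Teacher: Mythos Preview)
Your approach is essentially the same as the paper's: both recognize that the four-term expression on the right-hand side is exactly the commutator $-\frac{1}{[2]_i}\,[\wtd{\psi}_{i,1},\E_{k\delta-\alpha_i}]$ with $\wtd{\psi}_{i,1}=\E_{\delta-\alpha_i}\E_{\alpha_i}-q_i^{-2}\E_{\alpha_i}\E_{\delta-\alpha_i}$, and then invoke the known relation $[\wtd{\psi}_{i,1},\E_{k\delta-\alpha_i}]=-(q_i+q_i^{-1})\E_{(k+1)\delta-\alpha_i}$, which is precisely \cite[Lemma 4.2]{B94}. The paper simply cites that lemma; you unpack it via the Drinfeld presentation, which amounts to the same thing since Beck's lemma is one of the relations verified in establishing that isomorphism.

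Two small corrections, neither of which affects the argument's validity. First, the $\E_{k\delta-\alpha_i}$ are \emph{real} root vectors, not imaginary (the imaginary roots are the multiples of $\delta$). Second, under Beck's identification they are proportional to $x_{i,k}^-$, not $x_{i,k}^+$: compare weights, since $x_{i,k}^+$ has weight $\alpha_i+k\delta$ while $\E_{k\delta-\alpha_i}$ has weight $-\alpha_i+k\delta$. Accordingly, the Drinfeld relation you need is $[h_{i,1},x_{i,k}^-]=-[2]_i\,C^{1/2}x_{i,k+1}^-$, whose sign is what produces the overall minus in the lemma. With this fix the bookkeeping goes through as you describe; no separate induction on $k$ is needed once you cite the commutator relation from \cite{B94}.
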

\begin{proof}
	The proof is almost identical with the one of \cite[Lemma 4.3]{JKP23}) by using \cite[Lemma 4.2]{B94}.
	Note that $\wtd{\psi}_{i, k} = \E_{k\delta-\alpha_i}\E_{\alpha_i} - q_i^{-2}\E_{\alpha_i} \E_{k\delta-\alpha_i}$ in the case of non-symmetric types.
\end{proof}

For $i \in I$, we define the $\bk$-linear subspace $U_q^+(\delta-\alpha_i)$ of $U_q^+(\g)_{\delta-\alpha_i}$ by
\begin{equation*}
	U_q^+(\delta-\alpha_i) = \sum_{\beta} U_q^+(\g)_{\beta} \cdot \E_{\delta-\alpha_i-\beta},
\end{equation*}
where the sum is over $\beta \in \Delta^+ \setminus \left\{ \theta - \alpha_i \right\}$.
The following formula plays an important role in the subsequent computation of the highest $\ell$-weight of $M_{r,a}$.
\begin{lem} \label{lem:crucial formulas of affine root vectors}
Assume that $\g$ is of type $X_n^{(1)}$ and $r$ is cominuscule.
Then we have
\begin{equation} \label{eq:description of root vector}
	\E_{\delta-\alpha_r} - \mathbf{E}_{\delta-\alpha_r} \in U_q^+(\delta-\alpha_r),
\end{equation}
where $\mb{E}_{\delta-\alpha_r} \in U_q^+(\g)$ is given by
{\allowdisplaybreaks
\begin{gather*}
	\mb{E}_{\delta-\alpha_r} = 
	\begin{cases}
		\displaystyle (-q^{-1})^{n-1} \left( e_{r+1} \dots e_{n-1} e_n \right) \left( e_{r-1} \dots e_2 e_1 \right) e_0 & \text{if $X = A$ and $r \in \I$,} \\
		\displaystyle q^{-2n} (q^{-1}+q) \left( e_2 e_3 \dots e_n \right) \left( e_n e_{n-1} \dots e_2 \right) e_0 & \text{if $X = B$ and $r = 1$,} \\
		\displaystyle \left( \frac{q^{-1}}{q^{-1}+q} \right)^{n-1} \left(e_{n-1}\right)^2 \left(e_{n-2}\right)^2 \dots \left(e_1\right)^2 e_0 & \text{if $X = C$ and $r = n$,} \\
		q^{-2n+4} \left( e_2 e_3 \dots e_{n-2} e_{n-1} \right) \left( e_n e_{n-2} \dots e_3 e_2 \right) e_0	& \text{if $X = D$ and $r = 1$,} \\
		q^{-2n+4} \left( e_{n-2} e_{n-3} \dots e_2 e_1 \right) \left( e_n e_{n-2} \dots e_3 e_2 \right) e_0	& \text{if $X = D$ and $r = n-1$,} \\
		q^{-2n+4} \left( e_{n-2} e_{n-3} \dots e_2 e_1 \right) \left( e_{n-1} e_{n-2} \dots e_3 e_2 \right) e_0	& \text{if $X = D$ and $r = n$,} \\
		q^{-10} \left( e_3 e_4 e_5 e_6 e_2 e_4 e_5 e_3 e_4 e_2 \right) e_0	& \text{if $X_n = E_6$ and $r = 1$,} \\
		q^{-10} \left( e_5 e_4 e_3 e_1 e_2 e_4 e_3 e_5 e_4 e_2 \right) e_0	& \text{if $X_n = E_6$ and $r = 6$,} \\
		q^{-16} \left( e_6 e_5 e_4 e_2 e_3 e_1 e_4 e_5 e_6 e_3 e_4 e_5 e_2 e_4 e_3 e_1 \right) e_0	& \text{if $X_n = E_7$ and $r = 7$.}
	\end{cases}
\end{gather*}}
\end{lem}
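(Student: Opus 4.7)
The proof is by case-by-case verification over the cominuscule pairs $(X_n, r)$ in Table \ref{tab:dynkins}. The key structural observation is that for cominuscule $r$, every monomial in $U_q^+(\g)_{\delta - \alpha_r}$ involves $e_0 = \E_{\alpha_0}$ exactly once: indeed $[\theta]_r = a_r = 1$ forces $\delta - \alpha_r = \alpha_0 + (\theta - \alpha_r)$ with $\theta - \alpha_r \in \mr{\sfDel}^+$. By definition, $U_q^+(\delta - \alpha_r)$ is exactly the span of products of root vectors whose rightmost factor is not $e_0$, so \eqref{eq:description of root vector} is the statement that, after normalizing $\E_{\delta - \alpha_r}$ to put $e_0$ on the right, the outcome is the monomial $\mb{E}_{\delta - \alpha_r}$.

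To extract this ``right-ending'' piece, I would realize $\E_{\delta - \alpha_r}$ as an iterated braid image of $e_0$. By Lemma \ref{lem:independence of E} the answer is independent of the reduced expression of $t_{2\rho}$, so I pick one that begins with a reduced expression of $w_r$ ending in $s_{\tau(0)}$ (Corollary \ref{cor:rex of wr ending stau0} together with the explicit reduced words in Remark \ref{eq:rex of wr}). This choice makes $\delta - \alpha_r$ the image of $\alpha_0$ under a sequence of simple reflections $s_{j_1} \cdots s_{j_{N-1}}$ with all $j_k \in \I$, giving
\begin{equation*}
\E_{\delta - \alpha_r} = T_{j_1} T_{j_2} \cdots T_{j_{N-1}}(e_0).
\end{equation*}
Applying \eqref{eq:Ti(ej)} inductively unpacks the right-hand side as an iterated $q$-commutator with $e_0$ innermost. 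Each time one expands a $q$-commutator $[e_{j_k}, -]_{q^a}$, one summand keeps $e_0$ on the right while the other moves $e_{j_k}$ past it; the moving summand, after collapsing with the $q$-Serre relations, has some $\E_\gamma$ with $\gamma \neq \alpha_0$ as its rightmost factor and so lies in $U_q^+(\delta - \alpha_r)$. The unique surviving summand is then $\mb{E}_{\delta - \alpha_r}$, with the scalar equal to the product of the $q$-prefactors from \eqref{eq:Ti(ej)} accumulated at each braid step.

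The main obstacle is the book-keeping of these scalars in the exceptional cases $E_6^{(1)}$ with $r \in \{1, 6\}$ and $E_7^{(1)}$ with $r = 7$, where $\mb{E}_{\delta - \alpha_r}$ has $10$ and $16$ factors respectively and the ambient reduced word is long. Here I would either track the scalars by a direct finite computation, or work inside the quantum shuffle realization of $U_q^+(\g)$ from Appendix \ref{appendixA}: words ending in $0$ correspond exactly to the complement of the shuffle-space representing $U_q^+(\delta - \alpha_r)$, and Leclerc's Lyndon-word formula \cite{Le04} for $\E_{\delta - \alpha_r}$ exhibits directly the coefficient of every such word, reducing the statement to a finite combinatorial check. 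For the classical types $A_n^{(1)}$--$D_n^{(1)}$ the computation follows an inductive pattern already developed in the minuscule subcase in \cite{JKP23}, and $B_n^{(1)}, C_n^{(1)}$ are analogous.
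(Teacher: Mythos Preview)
Your approach is essentially the same as the paper's: realize $\E_{\delta-\alpha_r}$ as an iterated braid image, expand it as a nested $q$-commutator, and isolate the unique monomial with $e_0$ at the right end. The paper cites \cite{JKP23} for types $A,D$ and \cite{Liu} for type $C$, then carries out the remaining cases $B$, $E_6$, $E_7$ by exactly the iterated $q$-commutator expansion you describe.

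There is one organizational difference worth noting. You propose to choose a reduced expression of $t_{2\rho}$ so that $\E_{\delta-\alpha_r}=T_{j_1}\cdots T_{j_{N-1}}(e_0)$ with all $j_k\in\I$. The paper instead works entirely inside $w_r$: it computes the root vector $T_{i_1}\cdots T_{i_{\ell-1}}(e_{i_\ell})$ for the last root $\beta_\ell=\theta$ in $\sfDel^+(w_r)$ (where $i_\ell=\tau(0)$ by Corollary~\ref{cor:rex of wr ending stau0}), expresses it as the nested bracket $e_{i_1},e_{i_2}]_q,\ldots,e_{i_{\ell-1}}]_q$ modulo monomials not ending in $e_{i_1}=e_r$, and only then applies the diagram automorphism $\tau^{\pm 1}$ to transport the answer to weight $\delta-\alpha_r$ and turn the innermost $e_r$ into $e_0$. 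This sidesteps having to locate $\delta-\alpha_r$ explicitly inside a convex order on $\sfDel^+$ coming from $t_{2\rho}$, and it explains why the paper's intermediate formulas for $x$, $y$, $z$ involve $e_1$, $e_6$, $e_7$ rather than $e_0$. Your direct-$e_0$ route should be equivalent once you justify that the particular $T_{j_1}\cdots T_{j_{N-1}}(e_0)$ you write down really equals $\E_{\delta-\alpha_r}$ in the sense of \eqref{eq:real root vectors}; the $\tau$-trick is the paper's way of securing that identification without extra work. Your suggested shuffle-algebra alternative for the exceptional cases is not used in the paper here (Appendix~\ref{appendixA} is reserved for the $e_i'$-formulas in types $B,C$), but it would be a legitimate substitute for the direct bracket computation.
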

\begin{proof}
The formulas \eqref{eq:description of root vector} for $X = A,\, D$ are already proven in \cite[Lemma 4.7 and Lemma 4.12]{JKP23}.
We refer the reader to \cite[Lemma 6.5]{Liu} for the proof of \eqref{eq:description of root vector} when $X = C$.
Note that the reduced expression of $w_r$ in \cite[Remark 6.3]{Liu} coincides with the one in Remark \ref{eq:rex of wr} up to $2$-braid relations.
\smallskip

Let us verify two remaining cases $X = B,\, E$. For simplicity, let us write 
\begin{equation*}
a,b]_q := ab - q^{-1} ba.
\end{equation*}

{\it Case 1}. $X = B$.
Set $x_0 := T_{n-1} \dots T_2 (e_1)$ and $x := T_1 T_2 \dots T_{n-1} T_n (x_0)$.
It is straightforward to check
\begin{equation*}
	x_0 = e_{n-1}, e_{n-2}]_q, e_{n-3}]_q, \dots, e_2]_q, e_1]_q
\end{equation*}
(cf.~\cite[Lemma 4.6]{JKP23}).
By \cite[\S 39.2.3]{Lu10}, we have
\begin{equation*}
\begin{split}
	T_{n-1} T_n (e_{n-1}) &= T_n^{-1} \left( T_n T_{n-1} T_n (e_{n-1}) \right)
	= e_{n-1} e_n^{(2)} - q^{-2} e_n e_{n-1} e_n + q^{-4} e_n^{(2)}e_{n-1}. 
\end{split}
\end{equation*}
Then we compute
\begin{equation*}
\begin{split}
	& T_1 T_2 \dots T_{n-2} T_{n-1} T_n \left(\, e_{n-1}, e_{n-2}]_q, e_{n-3}]_q, \dots, e_2]_q, e_1]_q \,\right) \\
	&= T_1 T_2 \dots T_{n-2} T_{n-1} T_n (e_{n-1}), e_{n-1}]_q, e_{n-2}]_q, \dots, e_3]_q, e_2]_q \\
	&= T_1 T_2 \dots T_{n-2} \left( e_{n-1} e_n^{(2)} - q^{-2} e_n e_{n-1} e_n + q^{-4} e_n^{(2)}e_{n-1} \right) , e_{n-1}]_q, e_{n-2}]_q, \dots, e_3]_q, e_2]_q.
\end{split}
\end{equation*}
Hence, we have
\begin{equation*}
	x = e_1, e_2]_q,e_3]_q,\dots]_q,e_{n-1}]_q,e_n^{(2)}]_q, e_{n-1}]_q, e_{n-2}]_q, \dots, e_3]_q, e_2]_q + X,
\end{equation*}
where $X$ is the sum of the remaining monomials $e_{i_1}\dots e_{i_k}$ such that $e_{i_k} \neq e_1$.
In this case, the Dynkin diagram automorphism $\tau_1$ of $B_n^{(1)}$ is given by 
$$\tau_1(1) = 0,\quad \tau_1(0) = 1, \quad \tau_1(k) = k$$ 
for $2 \le k \le n$.
Since $\E_{\beta-\alpha_1} = \tau_1(x)$, we prove the desired formula by the above formula of $x$.
\vskip 2mm

{\it Case 2}. $X = E$.
First, we prove the case of $n = 6$ with $r = 1$.
One check
\begin{equation*}
\begin{split}
	    y &= T_1T_3T_4T_5T_6 T_2T_4T_5 T_3T_4T_2 T_1T_3T_4T_5(e_6) \\
	      &= e_1, e_3 ]_q, e_4 ]_q, e_2 ]_q, e_5 ]_q, e_4 ]_q, e_3 ]_q, e_6]_q, e_5]_q, e_4 ]_q, e_2 ]_q.
\end{split}
\end{equation*}
In this case, the Dynkin diagram automorphism $\tau$ is given by 
$$\tau(0) = 6,\, \tau(1) = 0,\, \tau(2) = 5,\, \tau(3) = 2,\, \tau(4) = 4,\, \tau(6) = 1,\, \tau(5) = 3.$$
Since $\E_{\delta-\alpha_1} = \tau(y)$, the desired formula follows from the above formula of $y$.
The proof for the case of $n=6$ with $r=6$ is similar.
\smallskip

Second, we prove the case of $n=7$ with $r=7$.
One check
\begin{equation*}
\begin{split}
	z & = T_7T_6T_5T_4T_3T_1 T_2T_4T_3 T_5T_4T_2 T_6T_5T_4T_3T_1 T_7T_6T_5T_4T_3T_2T_4T_5T_6(e_7) \\
	  &\,\, = e_7, e_6]_q, e_5]_q, e_4]_q, e_2]_q, e_3]_q, e_4]_q, e_5]_q, e_1]_q, e_3]_q, e_4]_q, e_6]_q, e_5]_q, e_2]_q, e_4]_q, e_3]_q, e_1]_q.
\end{split}
\end{equation*}
In this case, the Dynkin diagram automorphism $\tau$ is given by 
$$
\tau(0)=7,\, \tau(1)=6,\, \tau(2)=2,\, \tau(3)=5,\, \tau(4)=4,\, \tau(5)=3,\, \tau(6)=1,\, \tau(7)=0.
$$
Hence, the desired formula follows from the above formula of $z$.
\end{proof}

Let us recall \eqref{eq:Borel actions} and \eqref{eq:normalized 0-action}.
Following \cite{JKP23}, it is not difficult to obtain inductively the following formulas from \cite[Lemma 3.6]{JKP23}, Lemma \ref{lem:inductive formula}, Lemma \ref{lem:crucial formulas of affine root vectors}, Corollary \ref{cor:derivation on root vectors in B}, and Corollary \ref{cor:derivation on root vectors in C} (cf.~Table \ref{tab:dynkins}).

\begin{prop} \label{prop:action of affine root vectors for rho-}
Assume that $\g$ is of type $X_n^{(1)}$ and $r$ is cominuscule.
Then we have
{\allowdisplaybreaks
\begin{align*}
	\E_{k\delta-\alpha_i}(1) &=
	\begin{cases}
		\displaystyle (-1)^{kn-1}q^{-k(n+1)+2}(q-q^{-1})^{k-1}a^k f_r & \text{if $X = A$ and $i = r$,} \\
		\displaystyle (-1)^{k-1} \left( q^{-2n} (q^{-1}+q) a \right)^k q^{-2(k-1)} (1-q^{-4})^{k-1} f_1 & \text{if $X = B$ and $i = 1$,} \\
		\displaystyle (-1)^{k-1} \left( q^{-n+1}a \right)^k q^{-2(k-1)} (1-q^{-4})^{k-1} f_n & \text{if $X = C$ and $i = n$,} \\
		\displaystyle (-1)^{k-1} q^{-2k(n-1)+2} (q-q^{-1})^{k-1}a^k f_r & \text{if $X = D$ and $i = r$,} \\
		\displaystyle (-1)^{k-1}\left( aq^{-10} \right)^k q^{-k+1} (1-q^{-2})^{k-1} f_r & \text{if $X_n = E_6$ and $i = r$,} \\
		\displaystyle (-1)^{k-1} \left( aq^{-16} \right)^k q^{-k+1} (1-q^{-2})^{k-1} f_7 & \text{if $X_n = E_7$ and $i = 7$,} \\
		\displaystyle 0 & \text{if $i \in \I \setminus \{ r \}$,}
	\end{cases}
	\\
	\E_{\delta-\alpha_i}(f_r) &=
	\begin{cases}
		\displaystyle (-1)^{n-1} q^{-n+1}a f_r^2 & \text{if $X = A$ and $i = r$,} \\
		\displaystyle q^{-2n} (q^{-1}+q)a f_1^2 & \text{if $X = B$ and $i = 1$,} \\
		\displaystyle q^{-n+1}a f_n^2 & \text{if $X = C$ and $i = n$,} \\
		\displaystyle q^{-2n+4}a f_r^2 & \text{if $X = D$ and $i = r$,} \\
		\displaystyle aq^{-10} f_r^2 & \text{if $X_n = E_6$ and $i = r$,} \\
		\displaystyle aq^{-16} f_7^2 & \text{if $X_n = E_7$ and $i = 7$,}\\
		\displaystyle 0 & \text{if $i \in \I \setminus \{ r \}$.}
	\end{cases}
\end{align*}\!\!}
\end{prop}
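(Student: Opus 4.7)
The plan is to combine the explicit approximation of $\E_{\delta-\alpha_r}$ from Lemma~\ref{lem:crucial formulas of affine root vectors} with the inductive formula of Lemma~\ref{lem:inductive formula}, weight considerations, and the appendix rules for the action of $\mb{e}'_i$ on root vectors in $U_q^-(w_r)$. The argument splits into vanishing cases, a base step at level $k=1$, and a one-variable recursion handling higher levels.

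First I would dispose of the vanishing cases by a weight argument. Since $r$ is cominuscule, every $\beta \in \sfDel^+(w_r)$ lies in $\mr{\sfDel}^+$ with $[\beta]_r = 1$, so every nonzero classical weight of $M_{r,a} \cong U_q^-(w_r)$ has strictly negative $\alpha_r$-coefficient. In particular, $\E_{k\delta-\alpha_i}(1)$ lives in the weight space of weight $\overline{k\delta-\alpha_i} = -\alpha_i$, whose $\alpha_r$-coefficient is $0$ for $i \neq r$; this weight space is therefore zero. The same argument handles $\E_{\delta-\alpha_i}(f_r)$ unless $\alpha_i + \alpha_r \in \sfDel^+(w_r)$; the residual cases where $i$ is Dynkin-adjacent to $r$ need a short direct check via the shuffle formulas of Appendix~\ref{appendixA}.

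For the base case $i=r$, $k=1$, I would apply Lemma~\ref{lem:crucial formulas of affine root vectors} to write $\E_{\delta-\alpha_r} = \mb{E}_{\delta-\alpha_r} + X$ with $X \in U_q^+(\delta-\alpha_r)$, and establish that (i) $X(1) = X(f_r) = 0$, and (ii) $\mb{E}_{\delta-\alpha_r}(1)$ and $\mb{E}_{\delta-\alpha_r}(f_r)$ equal the claimed scalar multiples of $f_r$ and $f_r^2$. For (ii), the rightmost $e_0$ first produces a scalar multiple of the root vector $F(\theta)$, and the remaining $e_j$ ($j \neq 0$) act by $q$-derivations that successively peel off simple-root factors; Corollaries~\ref{cor:derivation on root vectors in B} and~\ref{cor:derivation on root vectors in C} track the non-simply-laced $q$-factors. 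For (i), each summand of $X$ has the form $y_\beta \E_\gamma$ with $\gamma$ a positive root, $\gamma \neq \alpha_0$: summands with $\gamma \in \mr{\sfDel}^+$ vanish by the previous paragraph, and the remaining summands with $\gamma = \delta + \mr{\gamma}$, $-\mr{\gamma} \in \mr{\sfDel}^+$, are eliminated using the shuffle description of $\E_\gamma$ from Appendix~\ref{appendixA} (cf.~\cite{Le04}).

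Finally, I would run induction in $k$ via Lemma~\ref{lem:inductive formula}. Writing $\E_{k\delta-\alpha_r}(1) = c_k f_r$ and $\E_{\delta-\alpha_r}(f_r) = c'_r f_r^2$, and using $\E_{\alpha_r}(1) = 0$ together with $\mb{e}'_r(f_r^2) = (1+q_r^{-2}) f_r$, the four terms of the inductive formula collapse to
\begin{equation*}
c_{k+1} \;=\; -q_r^{-1}\bigl(c_1 - q_r^{-2}\,c'_r\bigr)\, c_k.
\end{equation*}
In each cominuscule case one verifies directly that $c_1 = c'_r$, so the common ratio $c_{k+1}/c_k$ is a simple monomial in the base datum; substituting the case-by-case values of $c_1$ then recovers the announced closed forms in every type. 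The main obstacle is step~(i) in the base case—the naive weight argument fails for $\gamma = \delta + \mr{\gamma}$ with $[-\mr{\gamma}]_r \geq 1$, forcing careful use of the shuffle-algebra formulas from the appendix—together with the purely mechanical but delicate $q$-tracking through the iterated computations in types $B$, $C$, $E_6$, and $E_7$.
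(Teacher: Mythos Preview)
Your proposal is correct and follows essentially the same approach as the paper's proof, which simply states that the formulas follow inductively from \cite[Lemma 3.6]{JKP23}, Lemma~\ref{lem:inductive formula}, Lemma~\ref{lem:crucial formulas of affine root vectors}, and Corollaries~\ref{cor:derivation on root vectors in B}--\ref{cor:derivation on root vectors in C}. You have made the structure more explicit: in particular, your derivation of the recursion $c_{k+1} = -q_r^{-1}(c_1 - q_r^{-2}c'_r)c_k$ and the observation that $c_1 = c'_r$ in every cominuscule case is exactly the mechanism hidden behind the paper's phrase ``it is not difficult to obtain inductively,'' and it matches the explicit computations displayed later in the proof of Lemma~\ref{lem:l-highest weight}.
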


\begin{rem} \label{rem:action of ei on dual vector}
{\em 
\mbox{}
\begin{enumerate}[(1)]
	\item For types $A_n^{(1)}$, $D_n^{(1)}$, and $E_{6,7}^{(1)}$ with cominuscule $r \in \I$, 
	let us take a reduced expression $s_{i_1}\dots s_{i_\ell}$ of $w_r$ such that $\beta_1 = \alpha_r$ and $\beta_\ell = \theta$ as in Remark \ref{rem:compare with previous work}.
	Then we have
	\begin{equation*} 
		\quad 
		F^{\rm up}\left( \beta_k \right) \in \iota_{\varpi_r}\left( G^{\rm up}(\varpi_r) \right), 
		\quad
		e_i'\left( F^{\rm up}\left( \beta_k \right) \right) =
		\begin{cases}
			F^{\rm up}\left( \beta_k - \alpha_i \right) & \text{if $(\alpha_i, \beta_k) = 1 + \delta_{i,r}$\,,} \\
			0 & \text{otherwise,}
		\end{cases}
	\end{equation*}
	where $\iota_{\varpi_r}$ is the embedding from the irreducible highest weight $U_q(\cg)$-module $V(\varpi_r)$ associated to $\varpi_r$ into $U_q^-(\cg)$, and $G^{\rm up}(\varpi_r)$ is the dual canonical basis of $V(\varpi_r)$ (see \cite[Section 3.1]{JKP23} for more details and references therein). 
	This is proved in \cite[Lemma 3.6]{JKP23} for types $A$ and $D$, but the proof also holds for type $E$ with help of Proposition \ref{prop:length of tr} (or Remark \ref{eq:rex of wr}), since we use only some properties of dual canonical basis of $U_q^-(\cg)$ in the proof of \cite[Lemma 3.6]{JKP23}, which are available for general types \cite{Lu10,Ki12}.
	\smallskip
	
	\item For types $B_n^{(1)}$ and $C_n^{(1)}$ with cominuscule $r \in \I$, we also have
	\begin{equation*}
		F^{\rm up}\left( \beta_k \right) \in \iota_{\varpi_r}\left( G^{\rm up}(\varpi_r) \right).
	\end{equation*}
	However, we cannot apply the argument in the proof of \cite[Lemma 3.6]{JKP23} to obtain a formula of the action of $e_i'$ on $F^{\rm up}(\beta_k)$.
	Alternatively, we use an explicit realization of $F^{\rm up}(\beta_k)$ in terms of quantum shuffle algebra \cite{Le04}, and then we yield similar formulas in Corollary \ref{cor:derivation on root vectors in B} and Corollary \ref{cor:derivation on root vectors in C} for types $B$ and $C$, respectively (cf.~\cite[Proposition 3.14]{JKU24}).
	\smallskip

	\item 
	Thanks to (1) and (2), the crystal graph of $V(\varpi_r)$ is helpful to compute the formulas of Proposition \ref{prop:action of affine root vectors for rho-}.
	For the convenience of the reader, we provide the crystal graph of $V(\varpi_r)$ for type $E_6$ (resp.~$E_7$) \cite{Jang22}, which coincides with the following enumeration of $\sfDel^+(w_r)$ with respect to \eqref{eq:root wrt rex} up to shift of weights, where we denote by $\substack{\texttt{b} \\ \texttt{acdef}}$ (resp.~$\substack{\texttt{b}\,\,\, \\ \texttt{acdefg}}$) the positive root $\texttt{a}\alpha_1 + \texttt{b}\alpha_2 + \texttt{c}\alpha_3 + \texttt{d}\alpha_4 + \texttt{e}\alpha_5 + \texttt{f}\alpha_6 \in \mr{\sfDel}^+$ (resp.~$\texttt{a}\alpha_1 + \texttt{b}\alpha_2 + \texttt{c}\alpha_3 + \texttt{d}\alpha_4 + \texttt{e}\alpha_5 + \texttt{f}\alpha_6 + \texttt{g}\alpha_7 \in \mr{\sfDel}^+$), and	two positive roots are connected by a single edge if their difference is equal to $\pm \alpha_i$ for some $i \in \I$.
	\smallskip

	\begin{enumerate}[]
	\item {\it Type $E_6$ with $r = 6$}. Note that $\sfDel^+(w_1)$ is obtained from $\sfDel^+(w_6)$ by replacing $6 \leftrightarrow 1$ and $5 \leftrightarrow 3$.
	\begin{equation*} 
	\resizebox{0.85\columnwidth}{!}{%
	\begin{tikzpicture}[scale=1, baseline=(current  bounding  box.center)]
	  \draw[-] (0.3,0) -- (0.8,0); 
	  \draw[-] (1.6,0) -- (2.1,0); 
	  \draw[-] (2.9,0) -- (3.4,0); 
	  \draw[-] (4.2,0) -- (4.7,0); 
	  
	  \draw[-] (2.5,-0.25) -- (2.5,-0.75); 
	  \draw[-] (3.8,-0.25) -- (3.8,-0.75); 
	  \draw[-] (5.1,-0.25) -- (5.1,-0.75); 
	  
	  \draw[-] (2.9,-1) -- (3.4,-1); 
	  \draw[-] (4.2,-1) -- (4.7,-1); 
	  
	  \draw[-] (3.8, -1.25) -- (3.8, -1.75); 
	  \draw[-] (5.1, -1.25) -- (5.1, -1.75); 
	  
	  \draw[-] (4.2, -2) -- (4.7, -2); 
	  \draw[-] (5.5, -2) -- (6, -2); 
	  
	  \draw[-] (3.8, -2.25)-- (3.8, -2.75);  
	  \draw[-] (5.1, -2.25)-- (5.1, -2.75);  
	  \draw[-] (6.4, -2.25)-- (6.4, -2.75);  
	  
	  \draw[-] (4.2, -3) -- (4.7, -3); 
	  \draw[-] (5.5, -3) -- (6, -3); 
	  \draw[-] (6.8, -3) -- (7.3, -3); 
	  \draw[-] (8.1, -3) -- (8.6, -3); 
	  
	  \node at (-0.1,0) {\scalebox{0.8}{${}_{\substack{0 \\ 0000{\bf 1}}}$}}; 
	  \node at (1.2,0) {\scalebox{0.8}{${}_{\substack{0 \\ 0001{\bf 1}}}$}}; 
	  \node at (2.5,0) {\scalebox{0.8}{${}_{\substack{0 \\ 0011{\bf 1}}}$}}; 
	  \node at (3.8,0) {\scalebox{0.8}{${}_{\substack{0 \\ 0111{\bf 1}}}$}}; 
	  \node at (5.1,0) {\scalebox{0.8}{${}_{\substack{0 \\ 1111{\bf 1}}}$}}; 
	  
	  \node at (2.5, -1) {\scalebox{0.8}{${}_{\substack{1 \\ 0011{\bf 1}}}$}}; 
	  \node at (3.8, -1) {\scalebox{0.8}{${}_{\substack{1 \\ 0111{\bf 1}}}$}}; 
	  \node at (5.1, -1) {\scalebox{0.8}{${}_{\substack{1 \\ 1111{\bf 1}}}$}}; 
	  
	  \node at (3.8, -2) {\scalebox{0.8}{${}_{\substack{1 \\ 0121{\bf 1}}}$}}; 
	  \node at (5.1, -2) {\scalebox{0.8}{${}_{\substack{1 \\ 1121{\bf 1}}}$}}; 
	  \node at (6.4, -2) {\scalebox{0.8}{${}_{\substack{1 \\ 1221{\bf 1}}}$}}; 
	  
	  \node at (3.8, -3) {\scalebox{0.8}{${}_{\substack{1 \\ 0122{\bf 1}}}$}}; 
	  \node at (5.1, -3) {\scalebox{0.8}{${}_{\substack{1 \\ 1122{\bf 1}}}$}}; 
	  \node at (6.4, -3) {\scalebox{0.8}{${}_{\substack{1 \\ 1222{\bf 1}}}$}}; 
	  \node at (7.7, -3) {\scalebox{0.8}{${}_{\substack{1 \\ 1232{\bf 1}}}$}}; 
	  \node at (9, -3) {\scalebox{0.8}{${}_{\boxed{\substack{2 \\ 1232{\bf 1}}}}$}}; 
	\end{tikzpicture}}%
	\end{equation*}
	
	\item {\it Type $E_7$ with $r = 7$}.
	\begin{equation*} 
	\resizebox{0.85\columnwidth}{!}{%
	\quad
	\begin{tikzpicture}[baseline=(current  bounding  box.center)]
	  \draw[-] (-1, 0) -- (-0.5, 0);
	  \draw[-] (0.3,0) -- (0.8,0); 
	  \draw[-] (1.6,0) -- (2.1,0); 
	  \draw[-] (2.9,0) -- (3.4,0); 
	  \draw[-] (4.2,0) -- (4.7,0); 
	  
	  \draw[-] (2.5,-0.25) -- (2.5,-0.75); 
	  \draw[-] (3.8,-0.25) -- (3.8,-0.75); 
	  \draw[-] (5.1,-0.25) -- (5.1,-0.75); 
	  
	  \draw[-] (2.9,-1) -- (3.4,-1); 
	  \draw[-] (4.2,-1) -- (4.7,-1); 
	  
	  \draw[-] (3.8, -1.25) -- (3.8, -1.75); 
	  \draw[-] (5.1, -1.25) -- (5.1, -1.75); 
	  
	  \draw[-] (4.2, -2) -- (4.7, -2); 
	  \draw[-] (5.5, -2) -- (6, -2); 
	  
	  \draw[-] (3.8, -2.25)-- (3.8, -2.75);  
	  \draw[-] (5.1, -2.25)-- (5.1, -2.75);  
	  \draw[-] (6.4, -2.25)-- (6.4, -2.75);  
	  
	  \draw[-] (4.2, -3) -- (4.7, -3); 
	  \draw[-] (5.5, -3) -- (6, -3); 
	  \draw[-] (6.8, -3) -- (7.3, -3); 
	  \draw[-] (8.1, -3) -- (8.6, -3); 
	  
	  \draw[-] (3.8, -3.25)-- (3.8, -3.75);  
	  \draw[-] (5.1, -3.25)-- (5.1, -3.75);  
	  \draw[-] (6.4, -3.25)-- (6.4, -3.75);  
	  \draw[-] (7.7, -3.25) -- (7.7, -3.75); 
	  \draw[-] (9, -3.25) -- (9, -3.75); 
	  
	  \draw[-] (4.2, -4) -- (4.7, -4); 
	  \draw[-] (5.5, -4) -- (6, -4); 
	  \draw[-] (6.8, -4) -- (7.3, -4); 
	  \draw[-] (8.1, -4) -- (8.6, -4); 
	  
	  \draw[-] (7.7, -4.25) -- (7.7, -4.75);  
	  \draw[-] (9, -4.25) -- (9, -4.75); 
	  
	  \draw[-] (8.1, -5) -- (8.6, -5); 
	  
	   \draw[-] (9, -5.25) -- (9, -5.75); 
	   \draw[-] (9, -6.25) -- (9, -6.75); 
	   \draw[-] (9, -7.25) -- (9, -7.75); 
	  
	  
	  \node at (-1.4,0) {\scalebox{0.8}{${}_{\substack{0\,\,\, \\ 00000{\bf 1}}}$}}; 
	  \node at (-0.1,0) {\scalebox{0.8}{${}_{\substack{0\,\,\, \\ 00001{\bf 1}}}$}}; 
	  \node at (1.2,0) {\scalebox{0.8}{${}_{\substack{0\,\,\, \\ 00011{\bf 1}}}$}}; 
	  \node at (2.5,0) {\scalebox{0.8}{${}_{\substack{0\,\,\, \\ 00111{\bf 1}}}$}}; 
	  \node at (3.8,0) {\scalebox{0.8}{${}_{\substack{0\,\,\, \\ 01111{\bf 1}}}$}}; 
	  \node at (5.1,0) {\scalebox{0.8}{${}_{\substack{0\,\,\, \\ 11111{\bf 1}}}$}}; 
	
	  \node at (2.5, -1) {\scalebox{0.8}{${}_{\substack{1\,\,\, \\ 00111{\bf 1}}}$}}; 
	  \node at (3.8, -1) {\scalebox{0.8}{${}_{\substack{1\,\,\, \\ 01111{\bf 1}}}$}}; 
	  \node at (5.1, -1) {\scalebox{0.8}{${}_{\substack{1\,\,\, \\ 1111{\bf 1}}}$}}; 
	  
	  \node at (3.8, -2) {\scalebox{0.8}{${}_{\substack{1\,\,\, \\ 01211{\bf 1}}}$}}; 
	  \node at (5.1, -2) {\scalebox{0.8}{${}_{\substack{1\,\,\, \\ 11211{\bf 1}}}$}}; 
	  \node at (6.4, -2) {\scalebox{0.8}{${}_{\substack{1\,\,\, \\ 12211{\bf 1}}}$}}; 
	  
	  \node at (3.8, -3) {\scalebox{0.8}{${}_{\substack{1\,\,\, \\ 01221{\bf 1}}}$}}; 
	  \node at (5.1, -3) {\scalebox{0.8}{${}_{\substack{1\,\,\, \\ 11221{\bf 1}}}$}}; 
	  \node at (6.4, -3) {\scalebox{0.8}{${}_{\substack{1\,\,\, \\ 12221{\bf 1}}}$}}; 
	  \node at (7.7, -3) {\scalebox{0.8}{${}_{\substack{2\,\,\, \\ 12321{\bf 1}}}$}}; 
	  \node at (9, -3) {\scalebox{0.8}{${}_{\substack{1\,\,\, \\ 12321{\bf 1}}}$}}; 
	  \node at (3.8, -4) {\scalebox{0.8}{${}_{\substack{1\,\,\, \\ 01222{\bf 1}}}$}}; 
	  
	  \node at (5.1, -4) {\scalebox{0.8}{${}_{\substack{1\,\,\, \\ 11222{\bf 1}}}$}}; 
	  \node at (6.4, -4) {\scalebox{0.8}{${}_{\substack{1\,\,\, \\ 12222{\bf 1}}}$}}; 
	  \node at (7.7, -4) {\scalebox{0.8}{${}_{\substack{1\,\,\, \\ 12322{\bf 1}}}$}}; 
	  \node at (9, -4) {\scalebox{0.8}{${}_{\substack{2\,\,\, \\ 12322{\bf 1}}}$}}; 
	  \node at (7.7, -5) {\scalebox{0.8}{${}_{\substack{1\,\,\, \\ 12332{\bf 1}}}$}}; 
	  \node at (9, -5) {\scalebox{0.8}{${}_{\substack{2\,\,\, \\ 12332{\bf 1}}}$}}; 
	  \node at (9, -6) {\scalebox{0.8}{${}_{\substack{2\,\,\, \\ 12432{\bf 1}}}$}}; 
	  \node at (9, -7) {\scalebox{0.8}{${}_{\substack{2\,\,\, \\ 13432{\bf 1}}}$}}; 
	  \node at (9, -8) {\scalebox{0.8}{${}_{\boxed{\substack{2\,\,\, \\ 23432{\bf 1}}}}$}}; 
	\end{tikzpicture}}%
	\end{equation*}
\end{enumerate}
\noindent
where the boxed positive root is equal to $\theta$.
Similarly, when $r$ is cominuscule, the enumeration of $\sfDel^+(w_r)$ for types $A_n$ and $D_n$ under the same convention as above coincides with the crystal graph associated to $\varpi_r$ \cite{Kw13, JK19} up to shift of weights.
Note that the crystal graph of $V(\varpi_r)$ for types $B_n$ and $C_n$ can be realized from the crystal of $U_q^-(\cg)$ \cite{Kas91}, where $\cg$ is of type $A_{2n-1}$, by using the similarity of crystals \cite{Kas96} in terms of $\sfDel^+(w_r)$ (see~\cite{Kw18b}).
\end{enumerate}
}
\end{rem}

We denote by ${\bf 1}$ a non-zero scalar multiple of $1 \in M_{r,a}$.
Let us recall \eqref{eq:Borel actions} and \eqref{eq:normalized 0-action}.
Finally, we are ready to compute the $\ell$-weight of ${\bf 1}\in M_{r,a}$.

\begin{lem} \label{lem:l-highest weight}
For cominuscule $r \in \I$ and $a \in \bk^\times$, 
there exists $\eta_r \in \bk^\times$ such that the $\ell$-weight $\left( \Psi_i(z) \right)_{i \in I}$ of ${\bf 1} \in M_{r,a}$ is given as follows:
\begin{equation} \label{eq:hw l-wt of Lra}
		\Psi_i(z) = 
			\begin{cases}
				\displaystyle
				\frac{1}{1-a\eta_rz} & \text{if $i = r$,}\\
				1 & \text{if $i \neq r$,}
			\end{cases}
	\end{equation}
\end{lem}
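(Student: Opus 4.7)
The plan is to compute $\psi_{i,k}^+({\bf 1})$ for each $i \in \I$ and $k \geq 0$, and then to match the generating series \eqref{eq:psi generators} against the right-hand side of \eqref{eq:hw l-wt of Lra}. Since $\psi_{i,0}^+ = k_i$ and ${\bf 1} \in M_{r,a}$ has weight zero, we have $\Psi_{i,0} = 1$ for every $i \in \I$, which matches both branches of \eqref{eq:hw l-wt of Lra} at order zero. For $k \geq 1$, using that $C^{\pm 1/2}$ acts as the identity, formula \eqref{eq:Drinfeld generators in terms of affine root vectors} reduces the task to evaluating
\[
\psi_{i,k}^+({\bf 1}) \;=\; o(i)^k(q_i - q_i^{-1})\, k_i \cdot \bigl( \E_{k\delta - \alpha_i}\E_{\alpha_i} - q_i^{-2}\E_{\alpha_i}\E_{k\delta - \alpha_i} \bigr){\bf 1}.
\]

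First I would dispose of the case $i \neq r$. The first summand vanishes because $\E_{\alpha_i}({\bf 1}) = \mb{e}_i'(1) = 0$, and Proposition \ref{prop:action of affine root vectors for rho-} gives $\E_{k\delta-\alpha_i}({\bf 1}) = 0$ in all the listed cominuscule cases whenever $i \neq r$, which kills the second summand. Hence $\psi_{i,k}^+({\bf 1}) = 0$ for every $k \geq 1$, producing $\Psi_i(z) = 1$.

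Next I would handle $i = r$. Again $\E_{\alpha_r}({\bf 1}) = \mb{e}_r'(1) = 0$, leaving only $-q_r^{-2}\E_{\alpha_r}\E_{k\delta - \alpha_r}({\bf 1})$. Proposition \ref{prop:action of affine root vectors for rho-} writes $\E_{k\delta - \alpha_r}({\bf 1}) = A_k\, f_r$ for an explicit scalar $A_k \in \bk$ depending on the type, and then $\E_{\alpha_r}(f_r) = \mb{e}_r'(f_r) = 1 = {\bf 1}$ in $M_r$, while $k_r$ acts as the identity on the resulting weight-zero vector. This gives
\[
\psi_{r,k}^+({\bf 1}) \;=\; -o(r)^k(q_r - q_r^{-1})\, q_r^{-2}\, A_k \cdot {\bf 1}.
\]
A type-by-type inspection of the formulas for $A_k$ in Proposition \ref{prop:action of affine root vectors for rho-} shows that the factor $(q - q^{-1})^{k-1}$ (or its analogues $(1 - q^{-4})^{k-1}$ and $(1 - q^{-2})^{k-1}$) combines with the outer prefactor $(q_r - q_r^{-1})$ into a pure geometric progression in $k$, so that $\psi_{r,k}^+({\bf 1}) = (ac_r)^k \cdot {\bf 1}$ for a unique $c_r \in \bk^\times$ depending only on the type and on $r$. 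Summing this geometric series via \eqref{eq:psi generators} then yields $\Psi_r(z) = 1/(1 - ac_r z)$, as required.

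The main obstacle is verifying the geometric growth of $A_k$ case by case; this rests entirely on Proposition \ref{prop:action of affine root vectors for rho-}, whose proof in turn requires the inductive formula of Lemma \ref{lem:inductive formula} together with the explicit identification of $\E_{\delta-\alpha_r}$ provided by Lemma \ref{lem:crucial formulas of affine root vectors}. The cominuscule hypothesis $a_r = 1$ is essential here: it guarantees (cf.\ the argument in the proof of Proposition \ref{prop:ar ge 2 cases}) that at each inductive step no secondary PBW monomials of strictly smaller $\mr{\g}$-weight can contaminate the computation, so the recursion for $A_k$ closes as a pure geometric progression rather than mixing with lower-order corrections.
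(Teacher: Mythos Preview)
Your proposal is correct and follows the same backbone as the paper's proof: reduce $\psi_{i,k}^+({\bf 1})$ via \eqref{eq:Drinfeld generators in terms of affine root vectors} to $\E_{\alpha_i}\E_{k\delta-\alpha_i}({\bf 1})$, kill the case $i\neq r$, and for $i=r$ read off a geometric progression from Proposition~\ref{prop:action of affine root vectors for rho-}. The difference is in how the geometric progression is established. You invoke Proposition~\ref{prop:action of affine root vectors for rho-} for every $k\ge 1$ and check type by type that the explicit constants $A_k$ assemble into a pure $k$-th power. The paper instead uses Proposition~\ref{prop:action of affine root vectors for rho-} only at $k=1,2$ to extract scalars $b,c$ with $\E_{\alpha_r}\E_{\delta-\alpha_r}{\bf 1}=b{\bf 1}$ and $\E_{2\delta-\alpha_r}{\bf 1}=c\,\E_{\delta-\alpha_r}{\bf 1}$, and then argues structurally: there is $h\in U_q(\bo)^0$ with $[h,\E_{k\delta-\alpha_r}]=\E_{(k+1)\delta-\alpha_r}$, the weight space $V_{\widetilde{\alpha}_r^{-1}}$ is one-dimensional (here cominusculity enters), and since ${\bf 1}$ is a $U_q(\bo)^0$-eigenvector this forces $\E_{k\delta-\alpha_r}{\bf 1}=c^{k-1}\E_{\delta-\alpha_r}{\bf 1}$ for all $k$. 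One then obtains $\Psi_r(z)=\dfrac{1-(c+b(q_r^{-1}-q_r^{-3}))o(r)z}{1-o(r)cz}$ and finishes by verifying the single identity $c=b(q_r^{-3}-q_r^{-1})$ in each type. Your route is more direct but leans on the full inductive content of Proposition~\ref{prop:action of affine root vectors for rho-}; the paper's route trades that for a short eigenvector argument and a single scalar check per type, which is what Remark~\ref{rem:comment for proof} calls the ``more uniform'' computation.
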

\begin{proof}
Set $V = M_{r,a}$ for simplicity.
Since $r$ is cominuscule,
by Proposition \ref{prop:action of affine root vectors for rho-}, there exist $b \in \bk^\times$ and $c \in \bk$ such that 
\begin{equation} \label{eq:initial computation}
	\E_{\alpha_r}\E_{\delta-\alpha_r} {\bf 1} = b {\bf 1} \quad \text{ and } \quad
	\E_{2\delta-\alpha_r} {\bf 1} = c \E_{\delta-\alpha_r} {\bf 1}.
\end{equation}
Note that $\E_{\delta-\alpha_r} {\bf 1} \neq 0$, which spans the weight space $V_{\overline{\alpha}_r^{-1}}$ of $V$.
Thanks to Lemma \ref{lem:independence of E}, one can find an element $h \in U_q(\bo)^0$ such that
\begin{equation} \label{eq:important identity}
	\left[ h,\, \E_{k\delta-\alpha_r} \right] = \E_{\left( k+1 \right)\delta-\alpha_r}
\end{equation}
for $k \in \Z_{>0}$ (see \cite[Lemma 4.2]{B94} and \cite[Proposition 1.2]{BCP}).
Since ${\bf 1}$ is a simultaneous eigenvector of $U_q(\bo)^0$, there exists $d \in \bk$ such that $h {\bf 1} = d {\bf 1}$. 
By \eqref{eq:important identity}, we have
\begin{equation*}
	h \E_{\delta-\alpha_r} {\bf 1} 
	= \E_{2\delta-\alpha_r} {\bf 1} + \E_{\delta-\alpha_r} h {\bf 1}
	= (c+d)\E_{\delta-\alpha_r} {\bf 1},
\end{equation*}
that is, the element $h$ acts on $V_{\overline{\alpha}_r^{-1}}$ as the scalar multiplication by $c+d$.
\smallskip

We prove that for $k \in \Z_{>0}$, 
\begin{equation} \label{eq:action of Ekd-ar on 1}
	\E_{k\delta - \alpha_r} {\bf 1} = c^{k-1} \E_{\delta-\alpha_r} {\bf 1}.
\end{equation}
Suppose that the equality holds for $k$. Then it follows from \eqref{eq:important identity} and induction hypothesis that
\begin{equation*}
\begin{split}
	\E_{(k+1)\delta-\alpha_r}{\bf 1}
	= h \E_{k\delta-\alpha_r}{\bf 1} - \E_{k\delta-\alpha_r}h {\bf 1} 
	= (c+d)c^{k-1} \E_{\delta-\alpha_r}{\bf 1} - dc^{k-1} \E_{\delta-\alpha_r}{\bf 1}
	= c^k\E_{\delta-\alpha_r}{\bf 1}.
\end{split}
\end{equation*}
This prove \eqref{eq:action of Ekd-ar on 1}.
By \eqref{eq:Drinfeld generators in terms of affine root vectors} and \eqref{eq:action of Ekd-ar on 1}, we compute
\begin{equation*}
	\Psi_{r,k}^+({\bf 1})
	= o(r)^k (q_r-q_r^{-1}) (-q_r^{-2}) k_r \E_{\alpha_r} \E_{k\delta-\alpha_r}({\bf 1})
	= - b(q_r^{-1}-q_r^{-3}) o(r)^k c^{k-1} {\bf 1}.
\end{equation*}
Thus, the $r$-th component of the $\ell$-weight of ${\bf 1}$ is given by
\begin{equation} \label{eq:r-comp of weight of 1}
	\Psi_r(z) = 1 - b(q_r^{-1} - q_r^{-3}) \sum_{k=1}^\infty o(r)^k c^{k-1} z^k = 1 - \frac{b(q_r^{-1} - q_r^{-3})o(r) z}{1 - o(r)c z}.
\end{equation}

On the other hand, let $i \neq r$. 
By Proposition \ref{prop:Delta+ by tr}, we know that $-\alpha_i$ is not a weight of $V$. 
Therefore, we conclude that $\E_{\delta-\alpha_i}{\bf 1} = 0$. This implies $\Psi_i(z) = 1$ by \eqref{eq:Drinfeld generators in terms of affine root vectors} and Lemma \ref{lem:inductive formula}.
\smallskip

The remaining task is to determine whether \eqref{eq:r-comp of weight of 1} is of the form in \eqref{eq:hw l-wt of Lra} with respect to $\rho$.
Since we have by \eqref{eq:r-comp of weight of 1}
\begin{equation*}
	\Psi_r(z) = \frac{1-\left( c + b(q_r^{-1}-q_r^{-3}) \right) o(r)z}{1-o(r)c z},
\end{equation*}
it is enough to show that $c = b (q_r^{-3} - q_r^{-1})$. 
It follows from the following formulas which can be obtained by using Proposition \ref{prop:action of affine root vectors for rho-} (cf.~\cite[(4.17)]{JKP23}):
{\allowdisplaybreaks
\begin{align*}
	\E_{\alpha_r} \E_{\delta-\alpha_r} {\bf 1} &=
	\begin{cases}
		a(-q^{-1})^{n-1}  {\bf 1} & \text{if $X = A$,} \\
		aq^{-2n}(q^{-1}+q) {\bf 1} & \text{if $X = B$,} \\
		a q^{-n+1} {\bf 1} & \text{if $X = C$,} \\
		aq^{-2n+4} {\bf 1} & \text{if $X = D$,} \\
		aq^{-10} {\bf 1} & \text{if $X_n = E_6$,} \\
		aq^{-16} {\bf 1} & \text{if $X_n = E_7$,} \\
	\end{cases}
	\\
	\E_{2\delta-\alpha_r}{\bf 1} &= 
	\begin{cases}
		(-1)^{n-1} aq^{-n+1}(q^{-3}-q^{-1}) \E_{\delta-\alpha_r}{\bf 1} & \text{if $X = A$,} \\
		aq^{-2n}(q^{-1}+q) (q_1^{-3} - q_1^{-1}) \E_{\delta-\alpha_1}{\bf 1}  & \text{if $X = B$,} \\
		aq^{-n+1}(q_n^{-3}-q_n^{-1}) \E_{\delta-\alpha_n}{\bf 1} & \text{if $X = C$,} \\
		aq^{-2n+4}(q^{-3}-q^{-1})  \E_{\delta-\alpha_r}{\bf 1} & \text{if $X = D$,} \\
		aq^{-10} (q^{-3}-q^{-1})  \E_{\delta-\alpha_r}{\bf 1} & \text{if $X_n = E_6$,} \\
		aq^{-16} (q^{-3}-q^{-1})  \E_{\delta-\alpha_7}{\bf 1} & \text{if $X_n = E_7$.}
	\end{cases}
\end{align*}\!\!}
Hence, we complete the proof.
\end{proof}

\begin{proof}[Proof of Theorem \ref{thm:main3}]
By Lemma \ref{lem:l-highest weight}, there exists $c_r \in \bk^\times$ such that the maximal element ${\bf 1} \in M_{r,a}$ has the $\ell$-weight $\left(\Psi_i(z)\right)_{i \in \I}$ of the following form
\begin{equation*} 
\begin{split}
	\Psi_i(z) = 
	\begin{cases}
		\displaystyle \frac{1}{1-a\eta_r z} & \text{if $i = r$,} \\
		\displaystyle 1 & \text{otherwise.}
	\end{cases}
\end{split}
\end{equation*}
Let $N$ be the $U_q(\bo)$-submodule of $M_{r,a}$ generated by ${\bf 1}$.
By Remark \ref{rem:prefundamentals are building blocks of O},
$L_{r,a\eta_r}^-$ is a maximal quotient of $N$, but it follows from Theorem \ref{thm:main2} that $M_{r,a}$ is isomorphic to $L_{r,a\eta_r}^-$ as a $U_q(\bo)$-module.
\end{proof}

\subsection{Realization of cominuscule positive prefundamental modules} \label{subsec:realization of L+}
Let $\texttt{x}_0$ be given as in \eqref{eq:normalized 0-action}.
Following \cite[Section 3.3]{JKP23}, we define $\bk$-linear operators on $M_r$ by 
\smallskip
\begin{equation} \label{eq:another Borel actions}
\kaction{i}(u) = 
\begin{cases}
q^{\left(\al_i,{\rm wt}(u)\right)}u \quad &\text{if $i \neq 0$}\,,  \\
q^{-\left(\theta,{\rm wt}(u)\right)}u  \quad &\text{if $i=0$}\,,
\end{cases}
\qquad
\eaction{i} (u) = 
\begin{cases}
	 \mb{e}_i'(u)  & \text{ if } i \neq 0,  \\
	aq^{-\left( \theta, {\rm wt}(u) \right)} u \texttt{x}_0.  & \text{ if } i = 0,
\end{cases}
\end{equation}
where $\texttt{x}_0$ is given as in \eqref{eq:normalized 0-action}.
Note that the $\bk$-linear operators $\eaction{i}$ for $i \in I$ are well-defined due to \eqref{eq:LS formula} and Lemma \ref{lem:closedness of Mr}.
When $r \in \I$ is cominuscule, \eqref{eq:another Borel actions} coincides with \cite[(3.20)]{JKP23} for types $A$ and $D$ (cf.~Remark \ref{rem:compare with previous work}).
\smallskip

The following theorem is an extension of \cite[Theorem 3.10 and Theorem 4.17]{JKP23} to all cominuscule $r \in \I$.

\begin{thm} \label{thm:main3-1}
For cominuscule $r \in \I$, the map
\begin{equation*}
	\rho :
	\xymatrix@R=0em
	{
		U_q(\bo) \ar@{->}[r] & {\rm End}_\bk(M_r) \\
		e_i \ar@{|->}[r] & \mb{e}_i \\
		k_i \ar@{|->}[r] & \mb{t}_i
	}
\end{equation*}
is a representation of $U_q(\bo)$ on $M_r$.
Moreover, the $U_q(\bo)$-module $M_r$ is isomorphic to $L_{r,-a\eta_r}^+$.
\end{thm}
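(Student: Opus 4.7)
The plan is to proceed in parallel with the proof of Theorem \ref{thm:main3}: first verify that \eqref{eq:another Borel actions} defines a $U_q(\bo)$-module structure on $M_r$, then determine the highest $\ell$-weight of the distinguished vector $\mathbf{1} \in M_r$, and conclude via Theorem \ref{thm:characterization of simples in O} together with the character identity in Theorem \ref{thm:main2}.

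For the first step, the commutations $\kaction{i}\eaction{j} = q^{(\alpha_i,\alpha_j)}\eaction{j}\kaction{i}$ (with $\alpha_0$ identified with $-\theta$) follow from direct weight computations, and the Serre relations among $\{e_i\}_{i \in \I}$ are inherited from $\ms{B}_q^-(\g)$ by Lemma \ref{lem:negative half is irreducible module over boson algebra}. For Serre relations mixing $e_0$ with $e_j$, $j \in \I$, the weight-dependent scalar $q^{-(\theta, \mathrm{wt}(u))}$ in the definition of $\eaction{0}$ is chosen precisely so that right multiplication by $\texttt{x}_0$ intertwines the $q$-derivations \eqref{eq:derivation ei'} correctly: for $j \neq r$ this follows from Corollary \ref{cor:description of x0 and Tla}(2) and reduces to identities in $\ms{B}_q^+(\g)$; for $j = r$ the weight twist forces $\eaction{r}\eaction{0} = q_0^{a_{0r}}\eaction{0}\eaction{r}$ via Corollary \ref{cor:description of x0 and Tla}(1), and the identities \eqref{eq: qbn1}--\eqref{eq: qbn2} at $z = 1$ close the argument exactly as in Theorem \ref{thm:1st main}. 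Membership of $M_r$ in $\mc{O}$ is automatic from the weight-space decomposition, just as in Theorem \ref{thm:1st main}.

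The second step mirrors Lemma \ref{lem:l-highest weight}. Since $\mathbf{1}$ is annihilated by every $\eaction{i}$ for $i \in \I$ and the weight space $(M_r)_{-\alpha_r}$ is one-dimensional with basis $f_r$, every vector $\E_{k\delta-\alpha_r}\mathbf{1}$ is a scalar multiple of $f_r$. Choose $h \in U_q(\bo)^0$ with $[h, \E_{k\delta-\alpha_r}] = \E_{(k+1)\delta-\alpha_r}$ for all $k \ge 1$, as in Lemma \ref{lem:l-highest weight}. The decisive change from the proof of Theorem \ref{thm:main3} is that, under the right-multiplication action \eqref{eq:another Borel actions}, one finds $\E_{2\delta-\alpha_r}\mathbf{1} = 0$; the same bracket identity then forces $\E_{k\delta-\alpha_r}\mathbf{1} = 0$ inductively for all $k \ge 2$. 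Plugging this into \eqref{eq:Drinfeld generators in terms of affine root vectors} yields
\[
\Psi_r(z) = 1 - a c_r z, \qquad \Psi_i(z) = 1 \ \ (i \neq r),
\]
which is the highest $\ell$-weight of $L_{r, a c_r}^+$, with $c_r \in \bk^\times$ determined by the scalar produced by $\E_{\alpha_r}\E_{\delta-\alpha_r}\mathbf{1}$ and computable via Lemma \ref{lem:crucial formulas of affine root vectors} and the analogue of Proposition \ref{prop:action of affine root vectors for rho-} adapted to \eqref{eq:another Borel actions}. Finally, $U_q(\bo)\mathbf{1}$ admits a unique irreducible quotient isomorphic to $L_{r, a c_r}^+$, and the character identity in Theorem \ref{thm:main2} upgrades this to an isomorphism $M_r \simeq L_{r, a c_r}^+$.

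The principal obstacle is establishing the vanishing $\E_{2\delta-\alpha_r}\mathbf{1} = 0$. In the $L^-$ setting this same quantity produced the nonzero scalar $c$ generating the geometric progression \eqref{eq:action of Ekd-ar on 1}; here the weight twist $q^{-(\theta, \mathrm{wt}(-))}$ in $\eaction{0}$ must propagate through the four-term bracket in Lemma \ref{lem:inductive formula} and interact with the explicit form of $\texttt{x}_0$ from Lemma \ref{lem:crucial formulas of affine root vectors} to produce exact cancellation. In the spirit of Remark \ref{rem:comment for proof}, I expect this cancellation to be handled uniformly across all cominuscule types rather than requiring a case-by-case analysis.
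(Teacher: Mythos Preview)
Your treatment of the $\ell$-weight computation (Part 2) is essentially what the paper does: the key observation is indeed that $c = 0$ in \eqref{eq:initial computation} for the right-multiplication action, and the induction via $[h,\E_{k\delta-\alpha_r}]=\E_{(k+1)\delta-\alpha_r}$ then kills all higher $\E_{k\delta-\alpha_r}\mathbf{1}$. The paper verifies $c=0$ by computing $\E_{k\delta-\alpha_i}(1)$ and $\E_{\delta-\alpha_i}(f_r)$ explicitly type-by-type via Lemma \ref{lem:crucial formulas of affine root vectors}; your expectation of a uniform argument is reasonable but not what is done.

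The genuine gap is in your verification of the Serre relations. You split into $j=r$ versus $j\neq r$ and invoke Corollary \ref{cor:description of x0 and Tla} for both, but that corollary is tailored to the \emph{left} multiplication action of Theorem \ref{thm:1st main}: there $\eaction{0}$ is literally $a\,\texttt{T}_{-\lambda_r}(\mb{e}_0')$ acting on the left, so the relations pull back through $\texttt{T}_{-\lambda_r}$ to Serre relations in $\ms{B}_q^+(\g)$. Under \eqref{eq:another Borel actions} the operator $\eaction{0}(u)=aq^{-(\theta,{\rm wt}(u))}u\texttt{x}_0$ is \emph{right} multiplication with a weight twist, and there is no reason for $R_{a_{0j}}(0;\eaction{0},\eaction{j})$ to be $\texttt{T}_{-\lambda_r}$ of anything. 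Concretely, the correct dichotomy is $(\alpha_i,\theta)=0$ versus $(\alpha_i,\theta)\neq 0$, not $i=r$ versus $i\neq r$: in type $C_n^{(1)}$ with $r=n$ one has $(\alpha_r,\theta)=0$ (so $r$ is in the easy case), while the node $i=1\neq r$ has $a_{10}=-2$ and $(\alpha_1,\theta)\neq 0$, forcing the cubic Serre relation \eqref{eq:quantum Serre with four terms}. There $\mb{e}_1'(\texttt{x}_0)\neq 0$, so no simple $q$-commutation holds; the paper instead expands each monomial $\eaction{1}^a\eaction{0}\eaction{1}^b(u)$ via the Leibniz rule \eqref{eq:derivation ei'} and uses the explicit action of $e_i'$ on dual root vectors (Corollaries \ref{cor:derivation on root vectors in B}, \ref{cor:derivation on root vectors in C}) to see the cancellation. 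Your argument as written does not supply this computation.
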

\begin{proof}
First, we check the quantum Serre relations for $\left\{ \, \mb{e}_i \, | \, i \in I \,\right\}$.
It is enough to verify the relations involving $\eaction{0}$.
Let us fix a homogenous element $u \in M_r$ of weight $\beta$.
\smallskip

{\it Case 1}. 
Suppose that $(\alpha_i, \theta) = 0$.
In this case, we have
\begin{equation*}
\begin{split}
	\eaction{i} \eaction{0} (u) 
	= aq^{-(\theta,\beta)}\left\{ \mb{e}_i'(u)\texttt{x}_0 + q^{(\beta,\alpha_i)} u \mb{e}_i'(\texttt{x}_0) \right\} = aq^{-(\theta,\beta+\alpha_i)} \mb{e}_i'(u)\texttt{x}_0 = \eaction{0} \eaction{i} (u).
\end{split}
\end{equation*}
Here $\mb{e}_i'(\texttt{x}_0) = 0$ follows from \cite[Lemma 3.8]{JKP23} for types $ADE$ (cf.~Remark \ref{rem:action of ei on dual vector}),  Corollary \ref{cor:derivation on root vectors in B} for type $B$, and Corollary \ref{cor:derivation on root vectors in C} for type $C$.
\smallskip

{\it Case 2}. 
Suppose that $(\alpha_i, \theta) \neq 0$.
In this case, we check only the following relation which occurs in the case of $a_{i0} = -2$:
\begin{equation} \label{eq:quantum Serre with four terms}
	\eaction{i}^3 \eaction{0} - (q_i^{-2} + 1 + q_i^2)\eaction{i}^2 \eaction{0}\eaction{i} + (q_i^{-2} + 1 + q_i^2)\eaction{i}\eaction{0}\eaction{i}^2 - \eaction{0}\eaction{i}^3=0,
\end{equation}
since the other relations (in which $a_{i0} = a_{0i} = -1$) can be shown by the similar way as in the proof of \cite[Theorem 3.10]{JKP23} with the help of Corollary \ref{cor:derivation on root vectors in B} for type $B$. 
In fact, the above relation occurs only when $X=C$ and $q_i = q$.
Set $s = (\theta, \beta)$ and $t = (\beta, \alpha_i)$.
By using Corollary \ref{cor:derivation on root vectors in C}, we have
\begin{equation*}
\begin{split}
	(\eaction{i})^3 \eaction{0} (u) &=
	aq^{-s} (\mb{e}_i')^3(u)\texttt{x}_0 + \left( aq^{-s+t-4} + aq^{-s+t-2} + aq^{-s+t} \right) (\mb{e}_i')^2(u) \mb{e}_i'(\texttt{x}_0) \\
	& \qquad\qquad\qquad\quad\,\,\,\, + \left( aq^{-s+2t-4} + aq^{-s+2t-2} + aq^{-s+2t} \right) \mb{e}_i'(u) (\mb{e}_i')^2(\texttt{x}_0), \\
	\eaction{i}^2 \eaction{0} \eaction{i} (u) &=
	aq^{-s+2} (\mb{e}_i')^3(u)\texttt{x}_0 + (aq^{-s+t-2} + aq^{-s+t} (\mb{e}_i')^2(u) \mb{e}_i'(\texttt{x}_0)  \\
	& \qquad\qquad\qquad\qquad\,\,\,\, + q^{-s+2t-2} \mb{e}_i'(u)(\mb{e}_i')^2(\texttt{x}_0), \\
	\eaction{i} \eaction{0} \eaction{i}^2 (u) &=
	aq^{-s+4} (\mb{e}_i')^3(u)\texttt{x}_0 + aq^{-s+t}(\mb{e}_i')^2(u) \mb{e}_i'(\texttt{x}_0),\quad
	\eaction{0} \eaction{i}^3 (u) = aq^{-s+6}(\mb{e}_i')^3(u)\texttt{x}_0.
\end{split}
\end{equation*}
This proves \eqref{eq:quantum Serre with four terms}. 
By {\it Case 1}--{\it Case 2}, we prove our first assertion.
\smallskip

Second, we compute the highest $\ell$-weight of ${\bf 1} \in M_r$ with respect to \eqref{eq:another Borel actions}.
Following the proof of Lemma \ref{lem:l-highest weight}, it is enough to determine 
\begin{equation*}
	\E_{k\delta-\alpha_i}(1) \quad \text{ and } \quad
	\E_{\delta-\alpha_i}(f_r).
\end{equation*}
By Lemma \ref{lem:crucial formulas of affine root vectors}, we have
{\allowdisplaybreaks
\begin{align}
&	\E_{k\delta-\alpha_i}(1) =
	\begin{cases}
		\displaystyle a (-q^{-1})^{n-1} f_r & \text{if $X = A$, $i = r$, and $k = 1$,} \\
		\displaystyle a q^{-2n} (q^{-1} + q) f_1 & \text{if $X = B$, $i = 1$, and $k = 1$,} \\
		\displaystyle a q^{-n+1} f_n & \text{if $X = C$, $i = n$, and $k = 1$,} \\
		\displaystyle a q^{-2n+4} f_r & \text{if $X = D$, $i = r$, and $k = 1$,} \\
		\displaystyle a q^{-10} f_r & \text{if $X_n = E_6$, $i = r$, and $k = 1$,} \\
		\displaystyle a q^{-16} f_7 & \text{if $X_n = E_7$, $i = 7$, and $k = 1$,} \\
		\displaystyle 0 & \text{otherwise,}
	\end{cases} \label{eq:Edel-ali on 1} \\
&	\E_{\delta-\alpha_i}(f_r) =
	\begin{cases}
		\displaystyle a (-q^{-1})^{n-1} q^2 f_r^2 & \text{if $X = A$ and $i = r$,} \\
		\displaystyle a q^{-2n+2} (q^{-1} + q) f_1^2 & \text{if $X = B$ and $i = 1$,} \\
		\displaystyle a q^{-n+5} f_n^2 & \text{if $X = C$ and $i = n$,} \\
		\displaystyle a q^{-2n+6} f_r^2 & \text{if $X = D$ and $i = r$,} \\
		\displaystyle a q^{-8} f_r^2 & \text{if $X_n = E_6$ and $i = r$,} \\
		\displaystyle a q^{-14} f_7^2 & \text{if $X_n = E_7$ and $i = 7$,}\\
		\displaystyle 0 & \text{otherwise.}
	\end{cases} \label{eq:Edel-ali on fr}
\end{align}\!\!\!}
By \eqref{eq:Edel-ali on 1} and \eqref{eq:Edel-ali on fr}, we have $c = 0$ in \eqref{eq:initial computation}. 
Since the scalar $b$ in \eqref{eq:initial computation} can be determined explicitly by \eqref{eq:Edel-ali on 1} and \eqref{eq:Edel-ali on fr} (in fact, a scalar multiple of $a \in \bk^\times$),
it follows from \eqref{eq:r-comp of weight of 1} that 
$\Psi_r(z) = 1 + a\eta_r z$, where $\eta_r$ is given as in \eqref{eq:eta of negative prefundamental modules}.
Combining it with Theorem \ref{thm:main2}, we prove our second assertion.
\end{proof}

\begin{rem}
{\em 
In \cite{P}, the author develops a functor $\mc{F}$ from a category of $U_{q^{-1}}(\bo)$-modules to that of $U_q(\bo)$-modules such that $\mc{F}(L_{r,a}^{\pm, q^{-1}})$ are also prefundamental modules, where there exists $\gamma \in \mathbb{C}^\times$ satisfying $\mc{F}(L_{r,a}^{\pm,q^{-1}}) \cong L_{r,a\gamma}^\mp$ for all $i \in \I$ and $a \in \mathbb{C}^\times$. Here $L_{r,a}^{\pm,q^{-1}}$ are prefundamental modules over $U_{q^{-1}}(\bo)$.
Although the $U_q(\bo)$-actions of $\mc{F}(L_{r,a}^{\pm,q^{-1}})$ following \cite{P} seems to be different from \eqref{eq:another Borel actions}, the parameter $\eta_r$ \eqref{eq:eta of negative prefundamental modules} plays a similar role with the parameter $\gamma$. So one may expect a connection between our work and \cite{P}, but it is not yet clear to us.
}	
\end{rem}

\appendix
\section{Quantum shuffle algebras and root vectors}  \label{appendixA}

\noindent
Let us summarize the description of dual root vectors for types $B_n$ and $C_n$ following \cite{Le04}.

\subsection{Quantum shuffle algebras}
Let $\mathcal{A}=\{\,w_1<\dots<w_n\,\}$ be a linearly ordered set of alphabets, and let $\mathcal{W}$ be the set of words with alphabets in $\mathcal{A}$. 
We put $w[i_1,\dots,i_k]=w_{i_1}\dots w_{i_k}$ for $i_1,\dots,i_k$.
We regard $\mathcal{W}$ as the ordered set with respect to the following lexicographic order:
\begin{equation*}
	w[i_1, \dots, i_a] < w[j_1, \dots, j_b]
\end{equation*}
if there exists an $r$ such that $w_{i_r} < w_{j_r}$ and $w_{i_s} = w_{j_s}$ for $s < r$, or if $a < b$ and $w_{i_s} = w_{j_s}$ for $1 \le s \le a$.
For a word $w=w[i_1,\dots,i_k] \in \mc{W}$, we say that $w$ is a {\it Lyndon word} if $w<w[i_{s},i_{s+1}\dots,i_k]$ for $2\le s\le k$. Let $l\in \mc{W}$ be a Lyndon word. Write $l=l_1l_2$ such that $l_1\neq l$ and $l_1$ is a Lyndon word of maximal length. Then $l_2$ is also a Lyndon word.
We call $l=l_1l_2$ a {\it costandard factorization} of $l$.
For ${\bf w} = w[i_1, \dots, i_s] \in \mathcal{W}$, we denote by $|{\bf w}|$ the weight $\alpha_{i_1} + \dots + \alpha_{i_s}$. 
\smallskip

Let $\GL$ be the set of {\it good} Lyndon words.
Then we have a bijection 
\begin{equation}\label{eq:good Lyndon to positive root}
\begin{split}
 \xymatrixcolsep{2pc}\xymatrixrowsep{0pc}\xymatrix{
  \GL \ \ar@{->}[r] &\ \mathring{\Delta}^+ \\
  w[i_1,\dots,i_r] \ \ar@{|->}[r] &\ \alpha_{i_1}+\dots+\alpha_{i_r}
 }.
\end{split}
\end{equation}
where $\mathring{\Delta}^+$ is the set of positive roots for $\cg$.
For $\beta\in \mathring{\Delta}^+$, 
let $l(\beta)$ be the word in $\GL$ corresponding to $\beta$ under \eqref{eq:good Lyndon to positive root}.
We denote by $\prec$ the linear order on $\mathring{\Delta}^+$ induced from that on $\GL$ under \eqref{eq:good Lyndon to positive root}.

\begin{rem}
{\em 
	Note that the $\prec$ coincides with the convex order induced from the reduced expression of the longest element $w_0$ obtained from Remark \ref{eq:rex of wr} up to $2$-braid relations.
}
\end{rem}

Let $\mathcal{F}$ be the free $\bk$-algebra generated by $\mathcal{A}$.
We denote by $\cdot$ the multiplication of $\mc{F}$, which is given by the concatenation on words. 
We define 
\begin{equation*} 
	(x\cdot i) * (y\cdot j) = (x * (y\cdot j))\cdot i + q^{-(|x\cdot i|,|j|)} ((x\cdot i)*y)\cdot j,
\end{equation*}
and $x * w[]  = w[] * x = x$, for homogenous $x, y \in \mc{F}$ and $i, j \in \mc{A}$.
Then $(\mathcal{F}, *)$ is an associative $\bk$-algebra, which is called the {\it quantum shuffle algebra}.
Note that there exists an $\bk$-algebra embedding $\Phi$ from $U_q^-(\cg)$ into $(\mathcal{F}, *)$.
\smallskip

From now on, we identify $U_q^-(\cg)$ with its image of $\Phi$, denoted by $\mathcal{U}$. For $\beta \in \mathring{\Delta}^+$, we still denote by $F^{\rm up}(\beta)$ the image of the dual root vector $F^{\rm up}(\beta) \in U_q^-(\cg)$ (see \cite[Section 5]{Le04} for more detail).
If there is no confusion, then we omit the $\cdot$\,, that is, write $x \cdot y = xy$ for homogeneous $x, y \in \mc{F}$.
For homogenous ${\bf w} \in \mc{F}$, we also denote by $|{\bf w}|$ its weight in an obvious manner.
\smallskip

For each $i \in \I$, the derivation $e_i'$ is interpreted as an $\bk$-linear endomorphism of $\mathcal{F}$, denoted by $\mathbf{e}_i'$, by 
\begin{equation} \label{eq:derivation in F}
	\mathbf{e}_i' \left( w[i_1, \dots, i_k] \right) = \delta_{i,i_k} w[i_1, \dots, i_{k-1}] \text{ and } 
	\mathbf{e}_i' \left( w[] \right) = 0,
\end{equation}
and then $\Phi(e_i'u) = \mathbf{e}_i'\Phi(u)$ for $u \in U_q^-(\cg)$.
\smallskip

\begin{rem} \label{rem:our setting vs. Leclerc's setting}
{\em 
In \cite{Le04}, the author chose the braid group symmetry on $U_q^+(\mathring{\mf g})$ as $T_{i, -1}'$ with $q = v^{-1}$, while we have took $T_{i, 1}''$ on $U_q^-(\mathring{\mf g})$ with $q = v$ (see \cite[Section 37.1.3]{Lu10}). 
It is known in \cite[Section 37.2.4]{Lu10} (cf.~\cite{Sa94, Ki12}) that the following diagram commutes:
\begin{align*} 
\begin{gathered}
\xymatrixcolsep{2pc}\xymatrixrowsep{2pc} 
\xymatrix{   
	U_q(\mathring{\mf g}) \ar@{->}[r]^{\omega} \ar@{->}[d]_{T_{i,1}''} & U_q(\mathring{\mf g}) \ar@{->}[r]^{{}^-} & U_q(\mathring{\mf g}) \ar@{->}[d]^{T_{i,-1}'} \\
	U_q(\mathring{\mf g}) \ar@{->}[r]^{\omega} & U_q(\mathring{\mf g}) \ar@{->}[r]^{{}^-} & U_q(\mathring{\mf g})
} 
\end{gathered}
\end{align*}
where $\omega$ is the automorphism of $U_q(\mathring{\mf g})$ such that $\omega(e_i) = f_i$, $\omega(f_i)=e_i$ and $\omega(k_i) = k_i^{-1}$ for all $i \in \I$. Hence one can calculate the (dual) PBW basis of $U_q^-(\cg)$ by using \cite{Le04}.
}
\end{rem}

\subsection{Root vectors of type $B_n$}
We take $\mathcal{A} = \I$ with the linear oder given by
\begin{equation} \label{eq:co-standard order}
	1 < 2 < \dots < n.
\end{equation}
We remark that \eqref{eq:co-standard order} is reversed to the one in \cite[Section 8.2]{Le04}, so we compute the dual root vectors with respect to \eqref{eq:co-standard order}.
The set $\GL$ is given by
\begin{equation*}
\begin{split}
	\GL &= \left\{ w[i,\dots,j] \, | \, 1\le i \le j \le n \right\} \\ 
		&\,\,\,\quad \scalebox{1.2}{$\cup$} \left\{ w[i,i+1,\dots,n] w[n,n-1,\dots,j] \, | \, 1\le i < j \le n \right\}.
\end{split}
\end{equation*}

\begin{prop} {\rm (cf.~\cite[Corollary 6.7]{CHW})} \label{prop:dual root vectors in B}
For $\beta \in \mathring{\Delta}^+$, one has
\begin{equation*}
	F^{\rm up}(\beta) =
	\begin{cases}
		w[i,\dots,j] & \text{if $l(\beta) = w[i,\dots,j]$,} \\
		(q^{-1} + q)\,w[i,i+1,\dots,n]\, w[n,n-1,\dots,j] & \text{otherwise.}
	\end{cases}
\end{equation*}
\end{prop}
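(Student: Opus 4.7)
The plan is to apply Leclerc's algorithmic construction of dual PBW root vectors in the quantum shuffle realization of $U_q^-(\mathring{\mathfrak{g}})$ \cite[Section 5]{Le04} (cf.~\cite{CHW}). By induction along the convex order $\prec$ on $\mathring{\Delta}^+$, for each $\beta$ with costandard factorization $l(\beta) = l_1 l_2$, the dual root vector $F^{\rm up}(\beta)$ is computed from $F^{\rm up}(|l_1|)$ and $F^{\rm up}(|l_2|)$ via a $q$-bracketed shuffle projected onto the span of words no greater than $l(\beta)$. Because $\GL$ for type $B_n$ consists of only the two explicit families listed above, the recursion collapses to a short chain of explicit shuffle computations.

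For the first family $l(\beta) = w[i,\ldots,j]$ with $1 \le i \le j \le n$, I would argue by induction on $j-i$ that $F^{\rm up}(\beta) = w[i,\ldots,j]$. The costandard factorization is $w[i,\ldots,j-1] \cdot w[j]$, and since all letters are distinct, the shuffle $w[i,\ldots,j-1] * w[j]$ expands as a sum over insertion positions for the single letter $j$. After subtracting contributions along smaller good Lyndon monomials (which by induction are themselves Lyndon words of the same form) and dividing by the Kashiwara norm $(F(\beta),F(\beta))_{\mathrm K}$, the only surviving summand is the Lyndon word itself. For the second family $l(\beta) = w[i,\ldots,n,n,\ldots,j]$ with $1 \le i < j \le n$, the costandard factorization is $w[i,\ldots,n,n,\ldots,j+1] \cdot w[j]$, checked by verifying that every proper suffix of the proposed $l_1$ begins with a letter larger than $i$. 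This reduces via a secondary induction on $n-j$ to the base case $j = n$, where $l(\beta) = w[i,\ldots,n] \cdot w[n]$; the shuffle $w[i,\ldots,n] * w[n]$ produces two summands that coincide with $w[i,\ldots,n,n]$ as words, with coefficients $1$ and $q^{-(\alpha_n,\alpha_n)} = q^{-2}$, and their sum rescaled by the normalization in \eqref{eq:dual PBW monomial} yields exactly $(q^{-1}+q)\, l(\beta)$.

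The main obstacle will be the careful bookkeeping of the Kashiwara bilinear form values $(F(\beta),F(\beta))_{\mathrm K}$ that convert between $F$ and $F^{\rm up}$, together with verifying at each recursion step that the non-leading shuffle terms lie in the span of Lyndon words strictly smaller than $l(\beta)$, so that Leclerc's projection kills them. The factor $(q^{-1}+q) = [2]_{q_n}$ is intrinsic to the short simple root $\alpha_n$ and arises from the $q$-symmetrization of the two adjacent $n$-letters in $l(\beta)$; this is the essential new feature compared with the types $A$ and $D$ handled in \cite{JKP23}, where all simple roots have the same length and no such $q$-symmetrization factor appears. I expect the computation to otherwise mirror the simply-laced cases.
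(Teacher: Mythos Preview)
Your approach is valid but takes a genuinely different route from the paper's. You propose to run Leclerc's recursive $q$-bracket construction of root vectors along the costandard factorization, tracking norms and verifying cancellations at each step. The paper instead short-circuits the recursion entirely for the second family: it observes that the word ${\bf w}=l(\beta)$ itself lies in $\mathcal U$ (by \cite[Theorem~5]{Le04}), that $l(\beta)$ is the \emph{smallest} good word of weight $\beta$ (by \cite[Corollary~27]{Le04}), and that each dual root vector belongs to the dual canonical basis (by \cite[Theorem~4.29]{Ki12}); combining these with the max-word characterization \cite[Theorem~40]{Le04} forces ${\bf w}$ to be a scalar multiple of $F^{\rm up}(\beta)$, and the scalar $(q^{-1}+q)^{-1}$ is read off from a single leading-term computation. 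Your route would work and is closer in spirit to \cite{CHW}, but it is longer and requires checking that the non-leading shuffle terms really cancel in each $q$-bracket (they do, but this is the bulk of the labor); the paper's argument buys you the answer in one stroke by exploiting the uniqueness of the minimal good word of a given weight.

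Two small imprecisions in your sketch are worth flagging. First, there is no ``projection onto words $\le l(\beta)$'' in Leclerc's construction of the PBW root vector: the $q$-bracket \emph{is} the root vector, and the non-leading words must cancel outright rather than be discarded. Second, in your base case you compute the bare shuffle $w[i,\dots,n]*w[n]$ rather than the $q$-commutator $[F(|l_1|),f_n]_q$; the latter is what produces $F(\beta)$, and the two copies of $w[i,\dots,n,n]$ you identify must be combined with their counterparts from $w[n]*w[i,\dots,n]$ before the scalar comes out right. Neither issue is fatal, but both would need to be addressed in a complete write-up.
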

\begin{proof}
In the case of $l(\beta) = w[i,\dots,j]$, it follows from the induction on $j-i$ that we have $F^{\rm up}(\beta) = w[i, \dots, j]$.
Let us consider the case of $l(\beta) = w[i,i+1,\dots,n] w[n,n-1,\dots,j]$.
Let 
$${\bf w} = w[i,i+1,\dots,n] w[n,n-1,\dots,j].$$
It follows from \cite[Theorem 5]{Le04} that ${\bf w} \in \mc{U}$.
Clearly, $\max({\bf w}) = l(\beta)$.
Then let us write ${\bf w}$ as a linear combination of $F^{\rm up}(\beta)$ for $\beta \in \mathring{\Delta}^+$ as follows:
\begin{equation*}
	{\bf w} = \sum_{\substack{{\bf u} \le l(\beta) \\ |{\bf u}| = |{\bf w}|}} \lambda_{\bf u} F^{\rm up}({|{\bf u}|}).
\end{equation*}
But it is known in \cite[Corollary 27]{Le04} that $l(\beta)$ is the smallest good word of weight $\beta$.
Note that it follows from \cite[Theorem 4.29]{Ki12} that each dual root vector is contained in the dual canonical basis.
By \cite[Theorem 40]{Le04}, we conclude that ${\bf w} = \lambda_{l(\beta)} F^{\rm up}(\beta)$ and $\lambda_{l(\beta)}^{-1} = (q^{-1}+q)$ by calculating the coefficient of $l(\beta)$ in $F^{\rm up}(\beta)$.
\end{proof}

\begin{cor} \label{cor:derivation on root vectors in B}
Let $i \in \I$. Assume that $e_i' (F^{\rm up}(\beta)) \neq 0$. 
Then we have
\begin{equation*}
	e_i' (F^{\rm up}(\beta)) = F^{\rm up}(\beta - \alpha_i).
\end{equation*}
\end{cor}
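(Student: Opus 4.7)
The plan is to prove the identity by direct case-by-case computation on the shuffle algebra $\mathcal{F}$, using the explicit description of dual root vectors in Proposition \ref{prop:dual root vectors in B} together with the simple formula \eqref{eq:derivation in F}: the derivation $\mathbf{e}_i'$ removes the final letter of a word when it equals $i$, and annihilates otherwise. Consequently, the hypothesis $e_i'(F^{\rm up}(\beta)) \neq 0$ forces $i$ to be the final letter of the good Lyndon word $l(\beta)$, which narrows $\beta$ drastically through the bijection \eqref{eq:good Lyndon to positive root}.

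The first case is $l(\beta) = w[i_1, i_1+1, \dots, j]$ for some $i_1 \le j$, so that $F^{\rm up}(\beta) = w[i_1, \dots, j]$. The derivation is nonzero only when $i = j$, yielding $w[i_1, \dots, j-1]$, which under \eqref{eq:good Lyndon to positive root} corresponds to $\beta - \alpha_j$, so Proposition \ref{prop:dual root vectors in B} directly identifies the result as $F^{\rm up}(\beta - \alpha_j)$.

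The second case is $l(\beta) = w[i_1, \dots, n]\, w[n, n-1, \dots, j]$ for $1 \le i_1 < j \le n$, so that $F^{\rm up}(\beta) = (q^{-1}+q)\, w[i_1, \dots, n]\, w[n, n-1, \dots, j]$. The derivation is nonzero only when $i = j$, producing $(q^{-1}+q)\, w[i_1, \dots, n]\, w[n, n-1, \dots, j+1]$. When $j+1 \le n$ this directly matches $F^{\rm up}(\beta - \alpha_j)$ via Proposition \ref{prop:dual root vectors in B}, since $\beta - \alpha_j = \alpha_{i_1} + \cdots + \alpha_j + 2(\alpha_{j+1} + \cdots + \alpha_n)$ corresponds to the Lyndon word $w[i_1, \dots, n]\, w[n, n-1, \dots, j+1]$ of the same second type.

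The main technical delicacy I anticipate lies in the boundary case $j = n$, where the derivation reduces a dual root vector of the second type to one of the first type and one must verify that the scalar factor $(q^{-1}+q)$ drops out to recover the correctly normalized $F^{\rm up}(\beta - \alpha_n)$. I would address this by returning to the costandard factorization argument in the proof of Proposition \ref{prop:dual root vectors in B} and invoking the characterization in \cite[Theorem 40]{Le04}, or alternatively by using the compatibility of $\mathbf{e}_i'$ with the Kashiwara pairing to fix the normalization in this transitional case.
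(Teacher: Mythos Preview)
Your approach is exactly the paper's: the proof there reads in its entirety ``It follows directly from \eqref{eq:derivation in F} and Proposition \ref{prop:dual root vectors in B},'' and your case analysis unpacks precisely that sentence.

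Your flag on the boundary case $j=n$ is well taken and is the one place where the argument is not literally immediate. If one applies \eqref{eq:derivation in F} to the formula in Proposition \ref{prop:dual root vectors in B} verbatim, one obtains
\[
\mathbf{e}_n'\bigl((q^{-1}+q)\,w[i_1,\dots,n,n]\bigr)=(q^{-1}+q)\,w[i_1,\dots,n],
\]
which is $(q^{-1}+q)\,F^{\rm up}(\beta-\alpha_n)$ rather than $F^{\rm up}(\beta-\alpha_n)$. So the factor does \emph{not} simply ``drop out'' from the word formula, and the paper's one-line proof indeed glosses over this point. This boundary case is moreover genuinely used downstream (it occurs when computing $\E_{\delta-\alpha_1}(1)$ in type $B$ via Lemma \ref{lem:crucial formulas of affine root vectors}). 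Your suggestion to resolve it via the Kashiwara pairing or the costandard factorization is the right instinct, but you would need to actually carry it out: either show that Proposition \ref{prop:dual root vectors in B} requires a different normalization when $j=n$, or show that the corollary needs a $(q^{-1}+q)$ correction in the case $\beta-2\alpha_n\in\mathring{\sfDel}^+$ (parallel to Corollary \ref{cor:derivation on root vectors in C}), and then check that the formulas in Proposition \ref{prop:action of affine root vectors for rho-} remain correct under whichever resolution holds. As written, your proposal identifies the gap but does not close it.
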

\begin{proof}
It follows directly from \eqref{eq:derivation in F} and Proposition \ref{prop:dual root vectors in B}.
\end{proof}

\subsection{Root vectors of type $C_n$}
We take $\mathcal{A} = \I$ with the linear oder given by
\begin{equation*}
	n < n-1 < \dots < 1,
\end{equation*}
which corresponds to the standard order in \cite{Le04} under the identification of $k$ with $n-k+1$ for $1 \le k \le n$.
The set $\GL$ is given by 
\begin{equation*}
\begin{split}
	\GL &= \left\{ w[j,j-1, \dots, i] \, | \, 1 \le i \le j \le n \right\} \\
		& \,\,\,\quad \scalebox{1.2}{$\cup$} \left\{ w[n,n-1,\dots,j, n-1,n-2,\dots,k] \, | \, 1 \le j \le k < n \right\}.
\end{split}
\end{equation*}

\begin{prop}\!\!\!{\em \cite[Lemma 54]{Le04}\,} \label{prop:dual root vectors in C}
For $\beta \in \mathring{\Delta}^+$, one has
\begin{equation*}
	F^{\rm up}(\beta) = 
	\begin{cases}
		w[j,j-1, \dots, i] & \text{if $l(\beta) = w[j,j-1, \dots, i]$}, \\
		w[n] \left( w[n-1, n-2, \dots, j] * w[n-1, n-2, \dots, k] \right) & \text{otherwise.}
	\end{cases}
\end{equation*}
\end{prop}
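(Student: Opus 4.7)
The plan is to adapt the argument used for Proposition \ref{prop:dual root vectors in B}, following Leclerc's general strategy from \cite{Le04}, but taking into account that in type $C_n$ the long simple root is $\alpha_n$ and that the long positive roots arise from shuffle products rather than concatenations. Throughout, I rely on three tools: the characterization of the image $\mc{U} = \Phi(U_q^-(\cg))$ inside the shuffle algebra via Lyndon words \cite[Theorem 5]{Le04}, the characterization of $F^{\rm up}(\beta)$ as the (essentially unique) element of the dual canonical basis whose maximal word is $l(\beta)$ with coefficient $1$ (\cite[Theorem 40]{Le04} combined with \cite[Theorem 4.29]{Ki12}), and the fact that $l(\beta)$ is the smallest good word of weight $\beta$ \cite[Corollary 27]{Le04}.

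For the short roots, where $l(\beta) = w[j, j-1, \dots, i]$ and all the alphabets involved are among $\{1, \dots, n-1\}$ (so that the ambient subdiagram is simply-laced), I would proceed by induction on $j - i$ using the costandard factorization $l(\beta) = w[j]\cdot w[j-1, \dots, i]$, exactly as in the type $B_n$ case; the base case $\beta = \alpha_i$ gives $F^{\rm up}(\alpha_i) = w[i]$, and the recursion for dual root vectors along costandard factorizations then yields $F^{\rm up}(\beta) = w[j, j-1, \dots, i]$ with trivial $q$-corrections.

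For the long roots, where $l(\beta) = w[n, n-1, \dots, j, n-1, n-2, \dots, k]$ with $1 \le j \le k < n$, set
\[
{\bf w} := w[n]\bigl(w[n-1, n-2, \dots, j] * w[n-1, n-2, \dots, k]\bigr).
\]
I would first verify ${\bf w} \in \mc{U}$: each of $w[n-1,\dots,j]$ and $w[n-1,\dots,k]$ is the image of a short-root dual vector by the previous paragraph, their shuffle product lies in $\mc{U}$ by \cite[Theorem 5]{Le04}, and left concatenation by $w[n]$ (which equals $F^{\rm up}(\alpha_n)$) also stays in $\mc{U}$. Next I would expand the shuffle product $w[n-1, \dots, j] * w[n-1, \dots, k]$ and identify the maximal word in lexicographic order; because both factors are strictly decreasing and share the letter $n-1$ at their left end, the maximum is achieved by the interleaving $w[n-1, n-2, \dots, j, n-1, n-2, \dots, k]$, which after prepending $w[n]$ is precisely $l(\beta)$. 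A direct computation of the shuffle coefficient of this interleaving shows it equals $1$ (no extra $q$-factor appears because the letter $n$ is absent from the inner product). Finally, expanding ${\bf w}$ in the dual canonical basis and using minimality of $l(\beta)$ among good words of weight $\beta$ together with \cite[Theorem 40]{Le04}, one concludes ${\bf w} = F^{\rm up}(\beta)$.

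The main obstacle is the coefficient computation in the long-root case: one must rule out that auxiliary $q$-powers appear from the twisted shuffle product rule, which is sensitive to the bilinear form $(\,\cdot\,,\,\cdot\,)$. The key observation is that when extracting the leading interleaving, every pairwise product $(\alpha_i, \alpha_j)$ for $i,j \le n-1$ lies in the type $A_{n-1}$ root system, so the shuffle signs are controlled by symmetric integer data only; the long simple root $\alpha_n$ appears solely as the leftmost letter and contributes nothing to the inner interleaving. Unlike in type $B_n$, where a factor $(q^{-1}+q)$ arises from the two equal occurrences of the letter $n$ within the word, here the single occurrence of $n$ at the leftmost position produces no such correction, explaining the absence of a normalizing scalar in the formula.
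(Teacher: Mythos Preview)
The paper does not supply its own proof of this proposition; it is cited directly from \cite[Lemma~54]{Le04}, so there is no in-paper argument to compare against.

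Your sketch contains a genuine gap in the ``otherwise'' case. You assert that ``left concatenation by $w[n]$ \ldots also stays in $\mc{U}$,'' but $\mc{U}=\Phi(U_q^-(\cg))$ is a subalgebra of $(\mc{F},*)$, not of $(\mc{F},\cdot)$; concatenation by a letter does not preserve $\mc{U}$ in general. Knowing that $w[n-1,\dots,j]*w[n-1,\dots,k]\in\mc{U}$ therefore does not imply ${\bf w}=w[n]\cdot\bigl(w[n-1,\dots,j]*w[n-1,\dots,k]\bigr)\in\mc{U}$, and without that the expansion in the dual PBW basis and the appeal to \cite[Theorem~40]{Le04} are unavailable. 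Leclerc's own argument goes the other way: he computes the dual root vector recursively via the costandard factorization of $l(\beta)$ and identifies the resulting $q$-bracket with the concatenation--shuffle expression. If you want to rescue your route you must either show directly that every word occurring in ${\bf w}$ is good, or relate $w[n]\cdot x$ to $w[n]*x$ when $x$ contains no letter $n$; neither step is a one-liner.

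Two smaller issues. The first case $l(\beta)=w[j,\dots,i]$ allows $j=n$, so the letters need not lie in $\{1,\dots,n-1\}$; and the ``otherwise'' case also contains short roots $\epsilon_j+\epsilon_k$ with $j<k<n$, so calling it ``the long roots'' is inaccurate. Finally, your claim that the leading coefficient equals $1$ is not correct as stated: already for $n=3$, $j=1$, $k=2$ one computes that the coefficient of $w[2,1,2]$ in $w[2,1]*w[2]$ is $q^{-1}$, so the normalization step needs more care than ``no $q$-factor appears because the letter $n$ is absent.''
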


\begin{cor} \label{cor:derivation on root vectors in C}
Let $i \in \I$. Assume that $e_i' (F^{\rm up}(\beta)) \neq 0$. 
Then we have
\begin{equation*}
	e_i' (F^{\rm up}(\beta)) = 
	\begin{cases}
		(q^{-1} + q)\,F^{\rm up}(\beta - \alpha_i) & \text{if $\beta-2\alpha_i \in \mathring{\Delta}^+$,} \\
		F^{\rm up}(\beta - \alpha_i) & \text{otherwise.}
	\end{cases}
\end{equation*}
\end{cor}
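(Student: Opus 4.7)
The strategy is to apply the derivation formula \eqref{eq:derivation in F} to the explicit expression of $F^{\rm up}(\beta)$ from Proposition \ref{prop:dual root vectors in C} and analyze each form of good Lyndon word. In the first case, $l(\beta) = w[j, j-1, \ldots, i_0]$, so $F^{\rm up}(\beta)$ is this single word ending in $i_0$; hence $e_i'(F^{\rm up}(\beta))$ vanishes unless $i = i_0$, and otherwise equals $w[j, j-1, \ldots, i_0+1] = F^{\rm up}(\beta - \alpha_{i_0})$. Moreover $\beta = \alpha_{i_0} + \cdots + \alpha_j$ has all simple-root coefficients at most one, so $\beta - 2\alpha_i \notin \mathring{\sfDel}^+$, consistent with the stated formula.

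In the second case, $l(\beta) = w[n, n-1, \ldots, j, n-1, n-2, \ldots, k]$ with $1 \le j \le k < n$ (so $\beta = \epsilon_j + \epsilon_k$ in the standard realization), set $X = w[n-1, \ldots, j]$ and $Y = w[n-1, \ldots, k]$, giving $F^{\rm up}(\beta) = w[n] \cdot (X * Y)$. By the quantum shuffle recursion every term in $X * Y$ ends in either $j$ (from $X$) or $k$ (from $Y$), so $e_i'(F^{\rm up}(\beta))$ is zero unless $i \in \{j, k\}$. For $j < k$, stripping off the last letter of each factor via the shuffle formula and applying \eqref{eq:derivation in F} produces expressions that match the formula of Proposition \ref{prop:dual root vectors in C} for the roots $\epsilon_{j+1}+\epsilon_k$ (or $2\epsilon_k$ when $j+1=k$) and $\epsilon_j+\epsilon_{k+1}$ (or $\epsilon_j+\epsilon_n$ when $k=n-1$), showing that $e_i'(F^{\rm up}(\beta)) = F^{\rm up}(\beta - \alpha_i)$ with no extra factor; a direct check confirms that $\beta - 2\alpha_i$ is not a positive root of $C_n$ in this subcase.

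The only place the $(q^{-1}+q)$ factor appears is when $j = k$, that is, $\beta = 2\epsilon_j$ with $j < n$, in which case $\beta - 2\alpha_j = 2\epsilon_{j+1}$ is a positive root. Here $F^{\rm up}(2\epsilon_j) = w[n] \cdot (X * X)$ with $X = w[n-1, \ldots, j]$, and since both copies of $X$ end in $j$, the derivation $e_j'$ receives two contributions. Writing $X = X_0 \cdot j$ with $X_0 = w[n-1, \ldots, j+1]$ and expanding via the shuffle recursion,
\begin{equation*}
e_j'(F^{\rm up}(2\epsilon_j)) = w[n] \cdot \bigl( X_0 * X + q^{-(|X|, \alpha_j)}\, X * X_0 \bigr),
\end{equation*}
which after a further expansion and combination of $q$-powers reorganizes as $(q^{-1}+q)\, w[n] \cdot (X_0 * X_0) = (q^{-1}+q)\, F^{\rm up}(2\epsilon_{j+1})$. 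The main obstacle is this final algebraic simplification: the factor $[2]_q = q^{-1}+q$ must emerge from careful tracking of the shuffle $q$-exponents, and the boundary case $j = n-1$ (where $X_0$ is empty and $X = w[n-1]$ is a single letter) needs to be verified separately by direct computation. Representation-theoretically, the $[2]_q$ factor reflects the long-root transition $2\epsilon_j \to 2\epsilon_{j+1}$ in the crystal of $V(\varpi_n)$ for type $C_n$, consistent with Remark \ref{rem:action of ei on dual vector}(2).
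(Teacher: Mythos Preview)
Your overall strategy matches the paper's: use Proposition~\ref{prop:dual root vectors in C} together with the derivation formula~\eqref{eq:derivation in F}, and treat the case $j=k$ (equivalently $\beta-2\alpha_i\in\mathring{\sfDel}^+$) separately. The analysis of the first Lyndon-word shape and of the case $j<k$ is correct.

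The genuine error is in the $j=k$ step. You correctly arrive at
\[
\mathbf{e}_j'(F^{\rm up}(2\epsilon_j)) \;=\; w[n]\bigl(X_0 * X + q^{-1}\, X * X_0\bigr),
\]
but then claim this ``reorganizes as $(q^{-1}+q)\,w[n]\cdot(X_0*X_0)=(q^{-1}+q)\,F^{\rm up}(2\epsilon_{j+1})$''. This cannot be right: the two sides have different weights, since $|X_0*X_0|=2(\epsilon_{j+1}-\epsilon_n)$ while $|X_0*X|=\epsilon_j+\epsilon_{j+1}-2\epsilon_n$. The target root is $\beta-\alpha_j=2\epsilon_j-(\epsilon_j-\epsilon_{j+1})=\epsilon_j+\epsilon_{j+1}$, not $2\epsilon_{j+1}$, and by Proposition~\ref{prop:dual root vectors in C} one has $F^{\rm up}(\epsilon_j+\epsilon_{j+1})=w[n]\,(X*X_0)$, not $w[n]\,(X_0*X_0)$.

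What is actually needed is a relation between $X_0*X$ and $X*X_0$, and this does not come for free from ``expanding and combining $q$-powers''. The paper supplies the missing ingredient: since $X_0=F^{\rm up}(\beta_1)$ and $X=F^{\rm up}(\beta_2)$ are themselves dual root vectors with $\beta_1\prec\beta_2$, the dual Levendorski\u{\i}--Soibelman relation \cite[Theorem~4.27]{Ki12} (the dual of~\eqref{eq:LS formula}) gives a $q$-commutation between them, which combined with the displayed identity yields $(q^{-1}+q)\,w[n](X*X_0)=(q^{-1}+q)\,F^{\rm up}(\beta-\alpha_j)$. Without invoking this commutation relation the argument does not close, so you should replace the vague ``reorganizes'' step by an explicit appeal to the dual LS formula.
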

\begin{proof}
We only prove the case of $\beta - 2\alpha_i \in \mathring{\Delta}^+$, since the other case follows immediately from Proposition \ref{prop:dual root vectors in C}.
Assume that $l(\beta) = w[n,n-1,\dots,j, n-1,n-2,\dots,k]$.
Then we have $i = j = k$ (otherwise, $\beta - 2\alpha_i \notin \mathring{\Delta}^+$).
Since $\mathbf{e}_i'$ is a $\bk$-linear derivation of $\mathcal{U}$ with respect to $*$, we have 
\begin{equation} \label{eq:ei Fbeta when beta-2al in Delta+ in C}
\begin{split}
	\mathbf{e}_i' \left( F^{\rm up}(\beta) \right)
	&= w[n] \left( w[n-1, n-2, \dots, i+1]*w[n-1,n-2,\dots,i] \right. \\
	& \quad + \left. q^{-1} w[n-1,n-2,\dots,i]*w[n-1,n-2,\dots,i+1] \right)
\end{split}
\end{equation}
Let us take $\beta_k \in \mathring{\Delta}^+$ $(k=1,2)$ so that
\begin{equation*}
	l(\beta_k) = 
	\begin{cases}
		w[n-1, n-2, \dots, i+1] & \text{if $k = 1$,} \\
		w[n-1,n-2,\dots,i] & \text{if $k = 2$,}
	\end{cases}
\end{equation*}
with $\beta_1 \prec \beta_2$.
By Proposition \ref{prop:dual root vectors in C}, we have $F^{\rm up}(\beta_k) = l(\beta_k)$ for $k = 1, 2$. 
Then the desired formula follows from \eqref{eq:ei Fbeta when beta-2al in Delta+ in C} and the dual formula of \eqref{eq:LS formula} (see \cite[Theorem 4.27]{Ki12}).
\end{proof}

\providecommand{\bysame}{\leavevmode\hbox to3em{\hrulefill}\thinspace}
\providecommand{\MR}{\relax\ifhmode\unskip\space\fi MR }
\providecommand{\MRhref}[2]{%
  \href{http://www.ams.org/mathscinet-getitem?mr=#1}{#2}
}
\providecommand{\href}[2]{#2}


\begin{thebibliography}{10}

\bibitem{BLZ}
Vladimir V. Bazhanov, Sergei L. Lukyanov, and Alexander B. Zamolodchikov, \emph{Integrable structure of conformal field theory. III. The Yang-Baxter relation}, Comm. Math. Phys. \textbf{200} (1999), no.~2, 297--324.

\bibitem{B94}
Jonathan Beck, \emph{Braid group action and quantum affine algebras}, Comm. Math. Phys. \textbf{165} (1994), no.~3, 555--568.

\bibitem{B94b} 
\bysame, \emph{Convex bases of PBW type for quantum affine algebras}, Comm. Math. Phys.   \textbf{165} (1994), no.~1, 193--199.

\bibitem{BCP}
Jonathan Beck, Vyjayanthi Chari, and Andrew Pressley, \emph{An algebraic characterization of the affine canonical basis}, Duke Math. J. \textbf{99} (1999), no.~3, 455--487.

\bibitem{BN}
Jonathan Beck and Hiraku Nakajima, \emph{Crystal bases and two-sided cells of quantum affine algebras}, Duke Math. J. \textbf{123} (2004), no.~2, 335--402.

\bibitem{Bour02}
Nicolas Bourbaki, \emph{Lie groups and {L}ie algebras. {C}hapters 4--6}, Elements of Mathematics (Berlin), Springer-Verlag, Berlin, 2002, Translated from the 1968 French original by Andrew Pressley.

\bibitem{CP95b}
Vyjayanthi Chari and Andrew Pressley, \emph{A guide to quantum groups}, Cambridge University Press, Cambridge, 1995, Corrected reprint of the 1994 original.

\bibitem{CP95a}
\bysame, \emph{Quantum affine algebras and their representations}, Representations of groups ({B}anff, {AB}, 1994), CMS Conf. Proc., vol.~16, Amer. Math. Soc., Providence, RI, 1995, pp.~59--78.

\bibitem{Che}
Ivan Cherednik, \emph{Quantum {K}nizhnik-{Z}amolodchikov equations and affine root systems}, Comm. Math. Phys. \textbf{150} (1992), no.~1, 109--136.

\bibitem{CHW}
Sean Clark, David Hill, and Weiqiang Wang, \emph{Quantum shuffles and quantum supergroups of basic type}, Quantum Topol. \textbf{7} (2016), no.~3, 553--638.

\bibitem{Dami00}
Ilaria Damiani, \emph{The {$R$}-matrix for (twisted) affine quantum algebras}, Representations and quantizations ({S}hanghai, 1998), China High. Educ. Press, Beijing, 2000, pp.~89--144.

\bibitem{D} 
Vladimir G. Drinfeld,
\emph{A new realization of Yangians and of quantum affine algebras}, Soviet Math. Dokl. \textbf{36} (1988), no.~2, 212--216.

\bibitem{FH}
Edward Frenkel and David Hernandez, \emph{Baxter's relations and spectra of quantum integrable models}, Duke Math. J. {\bf 164} (2015), no.~12, 2407–2460.

\bibitem{GLS13}
Christof Gei\ss, Bernard Leclerc, and Jan Schr\"{o}er,
\emph{Cluster structures on quantum coordinate rings}, Selecta Math. (N.S.) \textbf{19} (2013), no.~2, 337--397.

\bibitem{Her06}
David Hernandez, \emph{The {K}irillov-{R}eshetikhin conjecture and solutions of {$T$}-systems}, J. Reine Angew. Math. \textbf{596} (2006), 63--87.

\bibitem{Her23} 
\bysame, \emph{Representations of shifted quantum affine algebras}, Int. Math. Res. Not., (2023), no.13, 11035--11126.

\bibitem{HJ}
David Hernandez and Michio Jimbo, \emph{Asymptotic representations and {D}rinfeld rational fractions}, Compos. Math. \textbf{148} (2012), no.~5, 1593--1623.

\bibitem{HL16}
David Hernandez and Bernard Leclerc, \emph{Cluster algebras and category {$\mathcal{O}$} for representations of {B}orel subalgebras of quantum affine algebras}, Algebra Number Theory \textbf{10} (2016), no.~9, 2015--2052.

\bibitem{HK}
Jin Hong and Seok-Jin Kang, {\em Introduction to quantum groups and crystal bases}, Graduate Studies in Mathematics 42.  Amer. Math. Soc., 2002.

\bibitem{Hump}
James~E. Humphreys, \emph{{R}eflection groups and {C}oxeter groups}, Cambridge Studies in Advanced Mathematics, vol.~29, Cambridge University Press, Cambridge, 1990.

\bibitem{Jang22}
Il-Seung Jang, \emph{A combinatorial realization of {K}irillov-{R}eshetikhin crystals for type {E} arising from translations}, J. Algebra \textbf{595} (2022), 660--694.

\bibitem{JK19}
Il-Seung Jang and Jae-Hoon Kwon, \emph{Quantum nilpotent subalgebras of classical quantum groups and affine crystals}, J. Combin. Theory Ser. A \textbf{168} (2019), 219--254.

\bibitem{JKP23}
Il-Seung Jang, Jae-Hoon Kwon, and Euiyong Park, \emph{Unipotent {Q}uantum {C}oordinate {R}ing and {P}refundamental {R}epresentations for {T}ypes {$A^{(1)}_n$} and {$D_n^{(1)}$}}, Int. Math. Res. Not. IMRN (2023), no.~2, 1119--1172.

\bibitem{JKU24}
Il-Seung Jang, Jae-Hoon Kwon, and Akito Uruno, \emph{Crystal bases of parabolic Verma modules over the quantum orthosymplectic superalgebras}, J. Algebra, \textbf{655} (2024), 537--585

\bibitem{Jan}
Jens Carsten Jantzen, \emph{Lectures on quantum groups}, Grad. Stud. Math., 6 American Mathematical Society, Providence, RI, 1996. viii+266 pp.

\bibitem{J}
Michio Jimbo, \emph{A q-analogue of $U(\mathfrak{gl}(N+1))$, Hecke algebra, and the Yang-Baxter equation}, Lett. Math. Phys. \textbf{11} (1986), no.~3, 247--252.

\bibitem{Kac}
Victor~G. Kac, \emph{Infinite-dimensional {L}ie algebras}, third ed., Cambridge University Press, Cambridge, 1990.

\bibitem{KKKO18}
Seok-Jin Kang, Masaki Kashiwara, Myungho Kim, and Se-jin Oh, \emph{Monoidal categorification of cluster algebras}, J. Amer. Math. Soc. \textbf{31} (2018), no.~2, 349--426.

\bibitem{Kas91}
Masaki Kashiwara, \emph{On crystal bases of the {$q$}-analogue of universal enveloping algebras}, Duke Math. J. \textbf{63} (1991), no.~2, 465--516.

\bibitem{Kas96}
\bysame, \emph{Similarity of crystal bases}, Lie algebras and their
  representations ({S}eoul, 1995), Contemp. Math., vol. 194, Amer. Math. Soc.,
  Providence, RI, 1996, pp.~177--186.

\bibitem{KKOP18}
Masaki Kashiwara, Myungho Kim, Se-jin Oh, and Euiyong Park, \emph{Monoidal categories associated with strata of flag manifolds}, Adv. Math. \textbf{328} (2018), 959--1009.

\bibitem{KKOP21}
\bysame, \emph{Braid group action on the module category of quantum affine algebras}, Proc. Japan Acad. Ser. A Math. Sci. \textbf{97} (2021), no.~3, 13--18.

\bibitem{Ki12}
Yoshiyuki Kimura, \emph{Quantum unipotent subgroup and dual canonical basis}, Kyoto J. Math. \textbf{52} (2012), no.~2, 277--331.

\bibitem{Ko}
Yasushi Komori, \emph{Theta functions associated with affine root systems and the elliptic {R}uijsenaars operators}, Physical combinatorics ({K}yoto, 1999), Progr. Math., vol. 191, Birkh\"{a}user Boston, Boston, MA, 2000, pp.~141--162.

\bibitem{Kw13}
Jae-Hoon Kwon, \emph{R{SK} correspondence and classically irreducible {K}irillov-{R}eshetikhin crystals}, J. Combin. Theory Ser. A \textbf{120} (2013), no.~2, 433--452.

\bibitem{Kw18b}
\bysame, \emph{Lusztig data of {K}ashiwara-{N}akashima tableaux in types {$B$} and {$C$}}, J. Algebra \textbf{503} (2018), 222--264.

\bibitem{Le04}
Bernard Leclerc, \emph{Dual canonical bases, quantum shuffles and {$q$}-characters}, Math. Z. \textbf{246} (2004), no.~4, 691--732.

\bibitem{Lee}
Chul-hee Lee, \emph{Product formula for the limits of normalized characters of {K}irillov-{R}eshetikhin modules}, Int. Math. Res. Not. IMRN (2021), no.~13, 10014--10036.

\bibitem{LS91}
Serge Levendorski\u{\i} and Yan Soibelman, \emph{Algebras of functions on compact quantum groups, {S}chubert cells and quantum tori}, Comm. Math. Phys. \textbf{139} (1991), no.~1, 141--170.

\bibitem{Liu}
Xingpeng Liu, \emph{On multiplicity-free weight modules over quantum affine algebras}, Pacific J. Math. \textbf{330} (2024), no.~2, 251--282.

\bibitem{Lu10}
George Lusztig, \emph{Introduction to quantum groups}, Modern Birkh\"{a}user Classics, Birkh\"{a}user/Springer, New York, 2010, Reprint of the 1994 edition.

\bibitem{MY14}
Evgeny Mukhin and Charles A. S. Young,
\emph{Affinization of category {$\mathcal{O}$} for quantum groups}, Trans. Amer. Math. Soc. \textbf{366} (2014), no.~9, 4815--4847.

\bibitem{MT18}
Dinakar Muthiah and Peter Tingley, \emph{Affine {PBW} bases and affine {MV} polytopes}, Selecta Math. (N.S.) \textbf{24} (2018), no.~5, 4781--4810.

\bibitem{Nak03}
Hiraku Nakajima, \emph{{$t$}-analogs of {$q$}-characters of {K}irillov-{R}eshetikhin modules of quantum affine algebras}, Represent. Theory \textbf{7} (2003), 259--274.

\bibitem{Neg}
Andrei Neguţ, \emph{Category {$\mathcal{O}$} for quantum loop algebras}, preprint (2025), arXiv:2501.00724.

\bibitem{Sa94}
Yoshihisa Saito, \emph{P{BW} basis of quantized universal enveloping algebras}, Publ. Res. Inst. Math. Sci. \textbf{30} (1994), no.~2, 209--232.

\bibitem{Pap94}
Paolo Papi, \emph{A characterization of a special ordering in a root system}, Proc. Amer. Math. Soc. \textbf{120} (1994) 661--665.

\bibitem{Pap95}
\bysame, \emph{Convex orderings in affine root systems}, J. Algebra \textbf{172} (1995), no.~3, 613--623.

\bibitem{P} 
Th\'{e}o Pinet, \emph{A functor for constructing $R$-matrices in the category $\mathcal{O}$ of Borel quantum loop algebras}, J. Lond. Math. Soc., {\bf 109}, (2024), no.~1, 1--46.	

\bibitem{Wakimoto}
Minoru Wakimoto, \emph{Lectures on infinite-dimensional {L}ie algebra}, World Scientific Publishing Co., Inc., River Edge, NJ, 2001.

\bibitem{Wang23}
Keyu Wang, \emph{{$Q\widetilde Q$}-systems for twisted quantum affine algebras}, Comm. Math. Phys. \textbf{400} (2023), no.~2, 1137--1179.

\end{thebibliography}
\end{document}